\theoremstyle{plain}
\newtheorem{lemma}{Lemma}[section]
\newtheorem*{theorem*}{Theorem}
\newtheorem*{lemma*}{Lemma}
\newtheorem*{proposition*}{Proposition}
\newtheorem*{conjecture*}{Conjecture}
\newtheorem*{corollary*}{Corollary}
\newtheorem*{problem*}{Problem}
\newtheorem{theorem}[lemma]{Theorem}
\newtheorem{conjecture}[lemma]{Conjecture}
\newtheorem{corollary}[lemma]{Corollary}
\newtheorem{proposition}[lemma]{Proposition}
\theoremstyle{definition}
\newtheorem{definition}[lemma]{Definition}
\newtheorem{example}[lemma]{Example}
\newtheorem{remark}[lemma]{Remark}
\newtheorem{notation}{Notation}
\newcommand{\F}[1]{\mathscr{#1}}
\newcommand{\Z}{\mathbb{Z}}
\renewcommand{\F}{\mathbb{F}}
\newcommand{\C}{\mathbb{C}}
\newcommand{\Q}{\mathbb{Q}}
\newcommand{\OO}{\mathcal{O}}
\newcommand{\te}{\otimes}
\newcommand{\cF}{\mathcal F}
\newcommand{\cM}{\mathcal M}
\newcommand{\cP}{\mathcal P}
\newcommand{\cE}{\mathcal{E}}
\newcommand{\ZZ}{\mathbb{Z}}
\renewcommand{\P}{\mathbb{P}}
\newcommand{\PP}{\mathbb{P}}
\DeclareMathOperator{\Bl}{Bl}
\DeclareMathOperator{\ch}{ch}
\DeclareMathOperator{\Hom}{Hom}
\DeclareMathOperator{\Pic}{Pic}
\DeclareMathOperator{\NE}{NE}
\DeclareMathOperator{\rk}{rk}
\DeclareMathOperator{\Ext}{Ext}
\DeclareMathOperator{\sHom}{\mathcal{H} \textit{om}}
\DeclareMathOperator{\Nef}{Nef}
\DeclareMathOperator{\num}{num}
\DeclareMathOperator{\td}{td}
\begin{document}

\date{\today}
\author[I. Coskun]{Izzet Coskun}
\address{Department of Mathematics, Statistics and CS \\University of Illinois at Chicago, Chicago, IL 60607}
\email{coskun@math.uic.edu}
\author[J. Huizenga]{Jack Huizenga}
\address{Department of Mathematics, The Pennsylvania State University, University Park, PA 16802}
\email{huizenga@psu.edu}
\subjclass[2010]{Primary: 14J60, 14J26. Secondary: 14D20, 14F05}
\keywords{Moduli spaces of sheaves, Brill-Noether theory, rational surfaces, Hirzebruch and del Pezzo surfaces}
\thanks{During the preparation of this article the first author was partially supported by the NSF CAREER grant DMS-0950951535 and NSF grant DMS-1500031, and the second author was partially supported by a National Science Foundation Mathematical Sciences Postdoctoral Research Fellowship DMS-1204066 and an NSA\ Young Investigator Grant H98230-16-1-0306}

\title{Weak Brill-Noether for rational surfaces}
\dedicatory{To Lawrence Ein, with gratitude and admiration, on the occasion of his sixtieth birthday}

\begin{abstract}
A moduli space of sheaves satisfies weak Brill-Noether if the general sheaf in the moduli space has no cohomology.  G\"{o}ttsche and Hirschowitz prove that on $\PP^2$ every moduli space of Gieseker semistable sheaves of rank at least two and Euler characteristic zero satisfies weak Brill-Noether. In this paper, we give sufficient conditions for weak Brill-Noether to hold on rational surfaces. We completely characterize Chern characters on Hirzebruch surfaces for which weak Brill-Noether holds. We also prove that on a del Pezzo surface of degree at least 4 weak Brill-Noether holds if the first Chern class is nef.  
\end{abstract}

\maketitle

\setcounter{tocdepth}{1}
\tableofcontents

\section{Introduction}

Let $X$ be a smooth, complex projective surface and let $H$ be an ample divisor on $X$. Let ${\bf v}\in K_{\num}(X)$ be the (numerical) Chern character of a Gieseker semistable sheaf on $X$ such that the Euler characteristic satisfies $\chi({\bf v})=0$.  Let $M_{X,H}({\bf v})$ denote the moduli space of Gieseker semistable sheaves on $X$ with Chern character ${\bf v}$.  We will call ${\bf v}$ \emph{stable} if there is a semistable sheaf of character ${\bf v}$.

\begin{definition}
The moduli space $M_{X, H}({\bf v})$ satisfies {\em weak Brill-Noether} if there exists a sheaf $E \in  M_{X,H}({\bf v})$ such that $H^i(X, E)=0$ for all $i$. 
\end{definition}

By  semicontinuity, if $E$ is any sheaf with no cohomology, then the cohomology also vanishes for the general sheaf in any component of  $M_{X,H}({\bf v})$ that contains $E$.  Since the cohomology vanishes identically, for weak Brill-Noether to hold, we must have  $\chi({\bf v})=0$.

The weak Brill-Noether  property is the key ingredient in constructing effective theta divisors on the moduli spaces $M_{X,H}({\bf v})$ and  plays a central role in describing effective cones of moduli spaces and strange duality. Assume that $M_{X, H}({\bf v})$ is irreducible. The locus
$$\Theta = \{ E \in M_{X,H}({\bf v}) | h^1(X, E) \not= 0\}$$ is called the {\em theta locus} and is an effective divisor when weak Brill-Noether holds for $M_{X,H}({\bf v})$.  In this paper, we study when Chern characters ${\bf v}$ satisfy weak Brill-Noether on rational surfaces $X$. We  completely classify Chern characters ${\bf v}$ satisfying weak Brill-Noether on Hirzebruch surfaces.   For general rational surfaces, we give sufficient conditions on Chern characters that guarantee that weak Brill-Noether holds. In particular, we show that if $X$ is a del Pezzo surface of degree at least 4 and $\ch_1({\bf v})$ is nef, then weak Brill-Noether holds. We now summarize our results in greater detail and give some examples.

\subsection*{Weak Brill-Noether for rank one sheaves} When the sheaves have rank one, the story is particularly simple.

\begin{proposition}
Let $L$ be a line bundle on a smooth projective surface $X$ such that  $n = \chi(L)\geq 0$.  Let $Z\subset X$ be a general zero-dimensional scheme of length $n$, and set ${\bf v} = \ch(L\te I_Z)$, so that $\chi({\bf v})=0$.  Then $H^i(X, L \otimes I_Z)=0$ for all $i$ if and only if $H^1(X, L)= H^2(X, L)=0$. In particular,  weak Brill-Noether holds for $M_{X,H}({\bf v})$ if and only if there exists a line bundle $M$ with $\ch_1(M)= \ch_1({\bf v})$ and $H^1(X, M) = H^2(X, M) = 0$.
\end{proposition}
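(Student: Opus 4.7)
The plan is to analyze the ideal sheaf sequence and track cohomology through it, then observe that a rank one torsion-free sheaf on a smooth surface is automatically semistable and of the form $L\te I_Z$.

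First I would tensor the standard exact sequence $0\to I_Z\to\mathcal{O}_X\to\mathcal{O}_Z\to 0$ by $L$ to get
\begin{equation*}
0\to L\te I_Z\to L\to L|_Z\to 0.
\end{equation*}
Since $Z$ is zero-dimensional, $H^i(L|_Z)=0$ for $i\ge 1$ and $H^0(L|_Z)\cong\C^n$. The long exact sequence in cohomology then collapses to an isomorphism $H^2(L\te I_Z)\cong H^2(L)$ together with
\begin{equation*}
0\to H^0(L\te I_Z)\to H^0(L)\fto{\ev} H^0(L|_Z)\to H^1(L\te I_Z)\to H^1(L)\to 0.
\end{equation*}

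From here both directions are immediate from the sequence. For the forward direction, if all $H^i(L\te I_Z)$ vanish, then $H^2(L)=H^2(L\te I_Z)=0$ and $H^1(L)$ is a quotient of $H^1(L\te I_Z)=0$; this in fact holds for any $Z$, not merely a general one. For the converse, assuming $H^1(L)=H^2(L)=0$ so that $n=\chi(L)=h^0(L)$, I would argue that $\ev$ is an isomorphism for a general $Z$ of length $n$; this is the usual induction in which points are chosen one at a time: given any nonzero subspace $W\subset H^0(L)$, a section $s\in W\sm\{0\}$ has a proper zero locus, so a general point $p$ kills a nonzero functional on $W$ and drops the dimension by one. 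After $n$ general points, $\ev$ is injective between spaces of the same dimension, hence an isomorphism, forcing $H^0(L\te I_Z)=H^1(L\te I_Z)=0$. This generic points-impose-independent-conditions step is the only nontrivial ingredient, but it is completely standard for line bundles.

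For the ``in particular'' clause I would first observe that any rank one torsion-free coherent sheaf $F$ on a smooth surface is Gieseker stable and can be written as $F\cong L\te I_Z$ with $L=F^{\vee\vee}$ and $Z$ the zero-dimensional scheme determined by $F\subset F^{\vee\vee}$; in particular $\ch_1(L)=\ch_1(F)$ and, from $\chi({\bf v})=0$ combined with Riemann-Roch, $\mathrm{length}(Z)=\chi(L)$. Thus if weak Brill-Noether holds, applying the proposition's first half to this $F$ produces a line bundle $M:=L$ with $\ch_1(M)=\ch_1({\bf v})$ and $H^1(M)=H^2(M)=0$. Conversely, given such an $M$, Riemann-Roch gives $\chi(M)=n\ge 0$, so the converse half produces a general $Z$ of length $n$ with $M\te I_Z$ having no cohomology; since $\ch(M\te I_Z)={\bf v}$ and $M\te I_Z$ is a stable rank one sheaf, this realizes weak Brill-Noether for $M_{X,H}({\bf v})$.
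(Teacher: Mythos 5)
Your proof is correct and follows essentially the same route as the paper: the restriction sequence $0\to L\otimes I_Z\to L\to L|_Z\to 0$ together with the standard fact that $n$ general points impose independent conditions on the $n$-dimensional space $H^0(X,L)$, plus the observation that every rank one torsion-free sheaf is of the form $L\otimes I_Z$ and is automatically stable. The only cosmetic difference is that you extract the forward implication directly from the long exact sequence (surjectivity of $H^1(X,L\otimes I_Z)\to H^1(X,L)$ and the isomorphism $H^2(X,L\otimes I_Z)\cong H^2(X,L)$, valid for any $Z$), whereas the paper argues the contrapositive via a case analysis on $h^0(X,L)$ versus $n$; both are fine.
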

\begin{proof}
Any torsion-free rank one sheaf  on a surface is isomorphic to $L \otimes I_Z$ for a line bundle $L$ and an ideal sheaf $I_Z$ of a zero-dimensional scheme. First, suppose  $H^1(X, L)=H^2(X, L)=0$ and $h^0(X, L)=n$.  Let $X^{[n]}$ denote the Hilbert scheme of $n$ points on $X$ and let $Z\in X^{[n]}$ be a general subscheme of length $n$. Consider the restriction sequence $$0\to L\te I_Z\to L \to L|_Z\to 0.$$  Since $Z$ is general, the map $H^0(X, L)\to H^0(X, L|_Z)$ is an isomorphism, and we find $L\te I_Z$ has no cohomology.

Conversely, suppose $H^1(X, L)$ or $H^2(X, L)$ is nonzero.  If $h^0(X, L)>n$, then $h^0(X, L\te I_Z)>0$ for every $Z\in X^{[n]}$.  Thus we may assume $h^0(X, L)\leq n$.  Then since at least one of $H^1(X, L)$ or $H^2(X, L)$ is nonzero and $\chi(L)=n$, we find $H^2(X, L)\neq 0$.  But $H^2(X, L\te I_Z) \cong H^2(X, L)$ for any $Z\in X^{[n]}$. In particular, weak Brill-Noether holds for $M_{X, H} ({\bf v})$ if and only if there exists a line bundle $M$ with $\ch_1(M) = \ch_1({\bf v})$ and $H^1(X, M) = H^2(X, M) =0$. 
\end{proof}

\begin{example}
On $\PP^2$, weak Brill-Noether fails for moduli spaces of rank one sheaves when the slope is $-3$ or less. This failure is intimately tied to the fact that the general rank one sheaf is not locally free. In contrast, G\"{o}ttsche and Hirschowitz \cite{GottscheHirschowitz} prove that if ${\bf v}$  has $\rk({\bf v}) \geq 2$ and $\chi({\bf v})$ is arbitrary, then the general sheaf $E \in M_{\PP^2, \OO(1)}({\bf v})$ has at most one nonzero cohomology group. The signs of the Euler characteristic and the slope determine which cohomology group is nonzero. In particular, if $\chi({\bf v}) =0$, then for the general stable sheaf, all cohomology groups vanish and weak Brill-Noether holds. The purpose of this paper is to generalize this theorem to other rational surfaces.
\end{example}

Our first main result classifies Chern characters on Hirzebruch surfaces that satisfy weak Brill-Noether. Let $H$ be any ample class on the Hirzebruch surface $\F_e =\PP(\OO_{\PP^1} \oplus \OO_{\PP^1}(e))$, $e\geq 0$. Let ${\bf v}$ be a stable Chern character of rank at least $2$ on  $\F_e$ such that $\chi({\bf v})=0$.   Let $$\nu({\bf v}) = \frac{c_1({\bf v})}{r({\bf v})} = \frac{k}{r} E + \frac{l}{r}F$$ denote the total slope of ${\bf v}$, where $E$ is the section of self-intersection $-e$ and $F$ is a fiber. By Walter's Theorem \cite[Theorem 1]{Walter}, the moduli spaces $M_{\F_e, H}({\bf v})$ are irreducible and the general sheaf is locally free. By Serre duality,  we may assume that $$\frac{k}{r}\geq -1 \ \ \mbox{ and if} \  \ \frac{k}{r}=-1, \ \mbox{then} \ \frac{l}{r} \geq -1-\frac{e}{2}.$$   

\begin{theorem}\label{thmIntroHirzebruch}
Let ${\bf v}$ be a stable Chern character that satisfies these inequalities. Then $M_{\F_e, H}({\bf v})$ satisfies weak Brill-Noether if and only if $$\frac{l-ke}{r} = \nu({\bf v}) \cdot E \geq -1.$$
\end{theorem}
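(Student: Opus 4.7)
My plan treats the two implications separately, using the projection $\pi : \F_e \to \PP^1$ whose fibers are the rulings $F$ and whose restriction to the section $E$ is an isomorphism. For \textbf{necessity}, assume $\nu(\mathbf{v}) \cdot E = (l-ke)/r < -1$, and let $V$ be a general locally free sheaf in $M_{\F_e, H}(\mathbf{v})$. The hypothesis $k/r \geq -1$ guarantees $R^1\pi_* V = 0$ (the generic fiber restriction $V|_{F_t}$ is balanced with all summands of degree $\geq -1$), so $H^i(V) \cong H^i(\PP^1, \pi_* V)$ by Leray. Weak Brill--Noether therefore demands $\pi_* V \cong \OO_{\PP^1}(-1)^{\oplus(k+r)}$. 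I would then analyze the evaluation map $\pi_* V \to V|_E$ along the section. Since $\nu(\mathbf{v}) \cdot E < -1$ forces the balanced splitting of $V|_E$ to contain some summand of degree $\leq -2$, cohomology tracking through the short exact sequence relating $\pi_* V$ and $V|_E$ on $\PP^1$ produces a contradiction, showing $\pi_* V$ cannot be the uniform $\OO(-1)$-bundle.

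For \textbf{sufficiency}, by Walter's theorem $M_{\F_e, H}(\mathbf{v})$ is irreducible with a locally free general member, so it suffices to produce a single cohomology-free sheaf of character $\mathbf{v}$. I would construct $V$ as the kernel or cokernel of a general morphism
$$\phi : L_1^{\oplus a} \to L_2^{\oplus b}$$
between direct sums of \emph{acyclic} line bundles on $\F_e$. Natural candidates include twists of $\OO(-E)$ (itself acyclic on $\F_e$) and other $\OO(aE + bF)$ with vanishing cohomology, readily verified via $\pi_*$. The conditions $k/r \geq -1$ and $(l-ke)/r \geq -1$ should ensure enough acyclic building blocks of the right numerical type exist to realize $\mathbf{v}$. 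Applied to a general $\phi$, the long exact sequence then gives $H^*(V) = 0$; semistability of $V$ follows from Walter's irreducibility or a direct slope check.

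The \textbf{main obstacle} is the sufficiency direction: producing a uniform construction across the admissible cone $\{k/r \geq -1,\ (l-ke)/r \geq -1\}$. The cone likely decomposes into subregions distinguished by the signs of $k$ and $l-ke$, each requiring a different resolution template, and borderline cases where one inequality is saturated (for instance $\nu(\mathbf{v}) \cdot E = -1$ or $k/r = -1$) may demand dedicated arguments, possibly via Serre duality reductions. A secondary subtlety on the necessity side is that $\pi_* V$ need not be a pushforward of ``constant'' fiber data: jumping phenomena in the splitting type of $V$ along special fibers must be handled carefully in the cohomology bookkeeping.
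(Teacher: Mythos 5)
Your sufficiency outline is essentially the paper's strategy: the paper realizes the general sheaf as the cokernel of a general map $\OO_{\F_e}(-E-(e+1)F)^{a}\to \OO_{\F_e}(-E-eF)^{b}\oplus\OO_{\F_e}(-F)^{c}$, and these three line bundles are exactly the acyclic building blocks you have in mind (they form part of a strong exceptional collection). But the obstacle you flag as unresolved is precisely where all the work lies: one must show the exponents $a,b,c$ determined by $\chi$-computations are nonnegative throughout the admissible cone. The exponents $b=-\chi(\cE(-E))$ and $c=\chi(\cE(F))=k+r$ are nonnegative directly from the hypotheses, but $a=-\chi(\cE(-E-F))=l-ke+k+r$ requires a genuine argument when $k<0$ and $e\in\{0,1\}$: the paper rules out $a<0$ by showing $\Hom(\OO(E+F),\cE)\neq 0$ would follow (via Serre duality and a slope comparison), contradicting semistability, with the single exception $\OO_{\P^1\times\P^1}(-1,-1)^{\oplus r}$ handled separately. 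Also note a logical point: one does not verify semistability of the constructed sheaf. Instead one checks it is $F$-prioritary and that the family of such cokernels is complete, then invokes Walter's irreducibility of the prioritary stack, inside which the semistable locus is a dense open substack. A ``direct slope check'' of the cokernel is not feasible and is not needed.

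The necessity direction is where your proposal has a real gap, and also where it diverges most from the paper. The paper's argument is short: if $\nu({\bf v})\cdot E<-1$ then $\chi(\cE(-E))=ke-l-r>0$ by Riemann--Roch, while $H^2(\F_e,\cE(-E))=\Hom(\cE,\OO(K+E))^{*}=0$ by semistability and the slope bounds (Lemma \ref{lem-h2}); hence $H^0(\cE(-E))\neq 0$, and the restriction sequence $0\to\cE(-E)\to\cE\to\cE|_E\to 0$ forces $H^0(\cE)\neq 0$ for \emph{every} semistable $\cE$. Your Leray approach has two problems. First, $R^1\pi_*V=0$ requires $h^1(V|_{F_t})=0$ on \emph{every} fiber, not just the generic one; controlling jumping fibers for the general member is a nontrivial step you acknowledge but do not supply. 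Second, and more seriously, the claimed contradiction is not substantiated: knowing $\pi_*V\cong\OO_{\P^1}(-1)^{\oplus(k+r)}$ and that $V|_E$ has a summand of degree $\leq -2$ does not visibly conflict, since the evaluation map $\pi_*V\to V|_E$ is neither injective nor surjective in general and no cohomological inequality is extracted from it. The sign of $\chi(\cE(-E))$, which is the actual source of the obstruction, never enters your argument, so as written the necessity direction does not go through.
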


As Theorem \ref{thmIntroHirzebruch} demonstrates, the existence of an effective curve $C$ on $X$ such that 
$${\bf w} = \ch(\OO_X(C)), \quad \chi({\bf w}, {\bf v}) > 0, \quad  \mbox{and} \quad \nu({\bf v}) \cdot H >(K_X + C) \cdot H$$
provides an obstruction to weak Brill-Noether for $M_{X,H}({\bf v})$. By Serre duality and stability, $$\Ext^2(\OO_X (C), \cE) \cong \Hom (\cE,  \OO_X(K_X+C))^* = 0.$$ Since $\chi(\OO(C), \cE) >0$, we have $\Hom(\OO_X(C), \cE)\not= 0$.  Composing with the natural map $$\OO_X \longrightarrow \OO_X(C), $$ we see that $\Hom(\OO_X, \cE) = H^0(X, \cE) \not= 0$ for every $\cE \in M_{X,H}({\bf v})$. We remark that if  $M_{X,H}({\bf v})$ is nonempty, $C$ must also satisfy $\nu({\bf v}) \cdot H \geq C \cdot H$. 

In order to prove weak Brill-Noether theorems, we need to ensure that these obstructions vanish. Let $\cP_{X, F}({\bf v})$ denote the stack of  $F$-prioritary sheaves on $X$. For blowups of $\PP^2$, our sharpest result is the following.

\begin{theorem}\label{thmIntroGeneral}
Let $X$ be a blowup of $\PP^2$ at $k$ distinct points $p_1, \dots, p_k$. Let $L$ be the pullback of the hyperplane class of $\PP^2$ and let $E_i$ be the exceptional divisor over $p_i$. Let ${\bf v}\in K(X)$ with $r=r({\bf v})>0$, and write $$\nu({\bf v}) := \frac{c_1({\bf v})}{r({\bf v})} = \delta L - \alpha_1E_1-\cdots -\alpha_k E_k,$$ so that the coefficients $\delta,\alpha_i\in \Q$.  Assume that $\delta\geq 0$ and $\alpha_i \geq 0$ for all $i$.  Suppose that the line bundle $$\lfloor \delta \rfloor L - \lceil \alpha_1\rceil E_1-\cdots - \lceil\alpha_k\rceil E_k$$ has no higher cohomology.  If $\chi({\bf v}) = 0$, then the stack $\cP_{X, L-E_1}({\bf v})$ is nonempty and the general $\cE\in \cP_{X, L-E_1}({\bf v})$ has no cohomology.
\end{theorem}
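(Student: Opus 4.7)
The plan is to exhibit a single $(L-E_1)$-prioritary sheaf $\cE$ with $\ch(\cE)={\bf v}$ and $H^*(X,\cE)=0$; combined with irreducibility of the stack $\cP_{X,L-E_1}({\bf v})$ (an analogue of Walter's theorem for blowups of $\PP^2$, which I would expect to have been established earlier in the paper) and semicontinuity of cohomology, this produces both nonemptiness and the vanishing claim.

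Set $M := \lfloor\delta\rfloor L - \sum_i \lceil\alpha_i\rceil E_i$, which has no higher cohomology by hypothesis. I would first assemble a small family of auxiliary line bundles obtained from $M$ by twisting by $+L$ or $+E_i$, and check via restriction sequences (to a general line in the class $L$, or to an exceptional curve $E_i$) that each of these still has no higher cohomology. Using the fact that $r(\delta - \lfloor\delta\rfloor)$ and $r(\lceil\alpha_i\rceil-\alpha_i)$ are non-negative integers whenever ${\bf v}$ is integral, I would then form the direct sum
$$\cE_0 := M^{\oplus a_0}\oplus(M+L)^{\oplus r(\delta-\lfloor\delta\rfloor)}\oplus\bigoplus_i(M+E_i)^{\oplus r(\lceil\alpha_i\rceil-\alpha_i)},$$
with $a_0$ chosen so that the total rank equals $r$. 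By construction $c_1(\cE_0)=c_1({\bf v})$, each summand has no higher cohomology, and so $h^0(\cE_0)=\chi(\cE_0)=:n\geq 0$.

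Next, I would mimic the rank-one argument from the preceding proposition: choose a general length-$n$ zero-dimensional subscheme $Z\subset X$ together with a general surjection $\cE_0\twoheadrightarrow\cO_Z$, and take $\cE$ to be the kernel. The resulting sequence
$$0\to\cE\to\cE_0\to\cO_Z\to 0$$
forces $\ch(\cE)={\bf v}$, and because $Z$ is general the evaluation map $H^0(\cE_0)\to H^0(\cO_Z)$ is an isomorphism, giving $H^*(X,\cE)=0$. To verify that $\cE$ is $(L-E_1)$-prioritary, I would apply $\Hom(-, \cE(E_1-L))$ to this defining sequence and reduce, via Serre duality, to vanishing of $\Ext^2(M_j, M_{j'}(E_1-L))$ for each pair of summands $M_j, M_{j'}$ (the piece involving $\cO_Z$ is automatic); this is controlled by the restricted range of the allowed twists together with $\delta\geq 0$ and $\alpha_i\geq 0$.

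The main obstacle I foresee is combinatorial: the multiplicity $a_0 = r - r(\delta-\lfloor\delta\rfloor) - \sum_i r(\lceil\alpha_i\rceil-\alpha_i)$ can turn out negative when several of the $\alpha_i$ are simultaneously non-integral, in which case the decomposition must be refined using mixed twists such as $M+L-E_j$ or $M+E_i-E_j$, and the no-higher-cohomology property must be re-established for each. The uniform book-keeping of these subcases, and propagating the hypothesis from $M$ itself to every auxiliary line bundle that appears, is where I expect the technical heart of the argument to lie.
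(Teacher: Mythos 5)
Your overall strategy coincides with the paper's: build a direct sum of line bundles whose coefficients are roundings of those of $\nu({\bf v})$, kill the remaining sections by general elementary modifications, and invoke Walter's irreducibility of $\cP_{X,L-E_1}({\bf v})$ (the paper packages the last two steps as Lemma \ref{lem-elementary} and Corollary \ref{cor-latticeVanish}). The gap is that the one step requiring actual work is exactly the step you leave open. Your decomposition only uses summands differing from $M=\lfloor\delta\rfloor L-\sum_i\lceil\alpha_i\rceil E_i$ in a single coordinate, so the multiplicity $a_0=r-r(\delta-\lfloor\delta\rfloor)-\sum_i r(\lceil\alpha_i\rceil-\alpha_i)$ is negative as soon as the fractional parts sum to more than $1$ (e.g.\ $r=2$, $\nu({\bf v})=\tfrac52 L-\tfrac12E_1-\tfrac12E_2$, where the hypothesis bundle $2L-E_1-E_2$ is harmless but $a_0=-1$), and you do not resolve this. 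Moreover the ``mixed twists'' you propose as a repair, $M+L-E_j$ and $M+E_i-E_j$, have $E_j$-coefficient $-\lceil\alpha_j\rceil-1$, outside the floor/ceiling range: the hypothesis together with Lemma \ref{lem-BasicBP^2}(2) only propagates vanishing of higher cohomology to $M$ plus \emph{nonnegative} combinations of $L$ and the $E_i$, so these bundles are not controlled, and they also do not obviously absorb the deficit in $c_1$.

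The paper closes this by letting each summand be an arbitrary independent rounding $eL-\sum_i b_iE_i$ with $e\in\{\lfloor\delta\rfloor,\lceil\delta\rceil\}$ and $b_i\in\{\lfloor\alpha_i\rfloor,\lceil\alpha_i\rceil\}$. Writing $\delta=d+p/r$ and $\alpha_i=a_i+p_i/r$ with $0\le p,p_i<r$, one simply takes exactly $p$ of the $r$ summands to have $L$-coefficient $d+1$ and exactly $p_i$ of them to have $E_i$-coefficient $-(a_i+1)$, choosing these subsets independently in each coordinate; no negative exponent can occur. Every such line bundle is $M$ plus a nonnegative combination of $L$ and the $E_i$, and since $M\cdot L=\lfloor\delta\rfloor\ge 0$ and $M\cdot E_j=\lceil\alpha_j\rceil\ge 0$ (here is where $\delta\ge 0$ and $\alpha_i\ge 0$ enter), Lemma \ref{lem-BasicBP^2}(2) gives the vanishing of higher cohomology for all of them. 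Prioritariness of the sum is then Lemma \ref{lem-prioritary} with $N=L$: any two summands have $L$-degrees differing by at most $1<2=-L\cdot(F+K_X)$. With this substitution for your $\cE_0$, the remainder of your argument (general elementary modifications dropping $h^0$ one at a time, then irreducibility of the prioritary stack) goes through as written.
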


In particular, when $H$ is an ample divisor on $X$ that satisfies $H \cdot (K_X +L-E_1) < 0$ and ${\bf v}$ is an $H$-stable Chern character satisfying the assumptions of Theorem \ref{thmIntroGeneral}, then $M_{X, H}({\bf v})$ satisfies weak Brill-Noether. On del Pezzo surfaces of large degree we obtain sharper results.

\begin{theorem}\label{thmIntrodelPezzo}
Let $X$ be a del Pezzo surface of degree at least 4.  Let ${\bf v}\in K(X)$ with $\chi({\bf v}) = 0$, and suppose $c_1({\bf v})$ is nef.  Then the stack $\cP_{X, L-E_1}({\bf v})$  is nonempty and a general $\cE\in \cP_{X, L-E_1}({\bf v})$ has no cohomology.
\end{theorem}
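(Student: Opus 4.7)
The plan is to reduce to Theorem \ref{thmIntroGeneral} by realizing $X$ as a blowup of $\PP^2$, then verifying the cohomology hypothesis on the auxiliary integer line bundle
\[
M := \lfloor \delta \rfloor L - \lceil \alpha_1 \rceil E_1 - \cdots - \lceil \alpha_k \rceil E_k.
\]
Since $X$ is a del Pezzo surface of degree $d \geq 4$, either $X \cong \Bl_k \PP^2$ at $k = 9-d \leq 5$ points in general position, or $X \cong \PP^1 \times \PP^1$; the latter case reduces directly to Theorem \ref{thmIntroHirzebruch} applied to $\F_0$, since a nef $c_1({\bf v})$ automatically satisfies the slope inequality of that theorem. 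In the remaining case, fix such a realization with coordinates $L, E_1, \ldots, E_k$.

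Next, I would exploit the nef hypothesis on $c_1({\bf v})$ by pairing with the $(-1)$-curves of $X$. On a del Pezzo surface of degree $\geq 4$ these are precisely the exceptional $E_i$, the strict transforms $L - E_i - E_j$ of lines through pairs of blown-up points, and---when $k = 5$---the strict transform $2L - E_1 - \cdots - E_5$ of the conic through all five. Pairing gives
\[
\alpha_i \geq 0 \text{ for all } i, \qquad \delta \geq \alpha_i + \alpha_j \text{ for } i \neq j, \qquad 2\delta \geq \alpha_1 + \cdots + \alpha_5 \text{ when } k = 5;
\]
in particular $\delta \geq 0$, so the sign hypotheses of Theorem \ref{thmIntroGeneral} hold. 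It then remains to verify $h^i(M) = 0$ for $i > 0$. Serre duality gives $h^2(M) = 0$ immediately, as $K_X - M$ has negative $L$-degree $-3 - \lfloor \delta \rfloor$. For $h^1(M)$, the natural tool is Kawamata-Viehweg applied to $M - K_X$: combining the inequalities above with ampleness of $-K_X$, one aims to show $M - K_X$ is nef and big.

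The main obstacle is the rounding from $(\delta, \alpha_i)$ to $(\lfloor \delta \rfloor, \lceil \alpha_i \rceil)$. When $c_1({\bf v})$ lies on or near a wall of the nef cone---for instance when $0 < \delta < 1$---the inequalities after rounding weaken enough that $M$ itself can be non-nef and genuinely carry nontrivial $h^1$, so Theorem \ref{thmIntroGeneral} is not directly applicable. In these boundary regimes the approach I would take is to bypass Theorem \ref{thmIntroGeneral} and construct a prioritary sheaf $\cE_0 \in \cP_{X, L-E_1}({\bf v})$ with no cohomology directly, as a direct sum of acyclic line bundles on $X$, corrected if necessary by elementary transformations at points to adjust $c_2$. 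The abundance of acyclic line bundles on a del Pezzo surface of degree $\geq 4$---arising from the rich exceptional collections available there---makes this construction feasible; the combinatorial problem of matching $(r, c_1, c_2)$ simultaneously while preserving the prioritary condition is the technical heart of the argument in this regime.
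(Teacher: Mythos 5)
There is a genuine gap, and it sits exactly where you park it: the final paragraph, which you describe as ``the technical heart of the argument in this regime,'' is in fact the entire content of the paper's proof, and you do not carry it out. The paper does not route Theorem \ref{thmIntrodelPezzo} through Theorem \ref{thmIntroGeneral} at all. Instead it proves directly that every integral nef class lies in the set $\Lambda_{r,-K_X}$ of first Chern classes of $(-K_X)$-good direct sums of line bundles (Proposition \ref{prop-delPezzoContained}), and then applies Corollary \ref{cor-latticeVanish} (elementary modifications, as you suggest). The point you underestimate is that ``direct sum of acyclic line bundles with the right $c_1$'' is not enough for prioritariness: by Lemma \ref{lem-prioritary} one needs $\Ext^2(L_i,L_j(-F))=0$ for \emph{all} pairs $i,j$, which the paper enforces by requiring the summands to be balanced, $(-K_X)\cdot(L_i-L_j)\leq 1$. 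Producing, for an arbitrary integral nef $D$ and arbitrary rank $r$, an acyclic decomposition $D=L_1+\cdots+L_r$ whose summands all have anticanonical degree within $1$ of each other is a nontrivial combinatorial statement; the paper's remark after Proposition \ref{prop-delPezzoRounding} (the class $2L$ with $r=3$ on the one-point blowup) shows that ``abundance of acyclic line bundles'' does not make this automatic, so feasibility genuinely has to be proved.

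Concretely, what is missing from your proposal is the mechanism of Propositions \ref{prop-delPezzoContained} and \ref{prop-delPezzoRounding}: the Weyl group action on $\Pic(X)$ (which preserves $K_X$, the nef cone, and $\Lambda_{r,-K_X}$), the upshift Lemma \ref{lem-upshift} used to push $D$ to the boundary of the nef cone and reduce to a class $dL-aE_1$ on the two-point blowup, and the induction on $r$ that peels off one summand $M$ at a time, built from the ``basic'' bundles $0$, $E_1$, $E_1+E_2$ corrected by copies of $L-3E_1$ and $L-2E_1-E_2$ so that $M$ is acyclic, has the prescribed $(-K_X)$-degree, and leaves $D-M$ nef. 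The first two-thirds of your write-up (the $(-1)$-curve inequalities, the $h^2$ vanishing, the correct diagnosis that rounding breaks the Kawamata--Viehweg approach near the walls of the nef cone, and the observation that $\P^1\times\P^1$ is handled by Theorem \ref{thmIntroHirzebruch}) are sound, but they only delimit the problem; the case you defer is the theorem.
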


The following conjecture may be thought of as a higher rank analogue of the celebrated Segre-Harbourne-Gimigliano-Hirschowitz conjecture \cite{Segre, Harbourne, gimi2, Hirschowitz}.

\begin{conjecture}
Assume $X$ is a general blowup of $\PP^2$ and $c_1({\bf v})$ is nef. Let $F= L -E_1$.  If $H$ is an ample class such that $H \cdot (K_X + F) < 0$, then $M_{X, H}({\bf v})$ satisfies weak Brill-Noether. 
\end{conjecture}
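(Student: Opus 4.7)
The strategy is to reduce the conjecture to Theorem~\ref{thmIntroGeneral} applied with $F = L - E_1$. Writing $\nu({\bf v}) = \delta L - \alpha_1 E_1 - \cdots - \alpha_k E_k$, nefness of $c_1({\bf v})$ together with the effectiveness of the classes $E_i$ and of the strict transform $L - E_i$ of a line through $p_i$ forces $\alpha_i \ge 0$ and $\delta \ge \alpha_i$ for every $i$; in particular $\delta \ge 0$, so the sign hypotheses of Theorem~\ref{thmIntroGeneral} are automatic.

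The only input still needed to invoke Theorem~\ref{thmIntroGeneral} is the vanishing of higher cohomology for the integral line bundle
\[
M := \lfloor \delta \rfloor L - \lceil \alpha_1 \rceil E_1 - \cdots - \lceil \alpha_k \rceil E_k
\]
on the general blowup $X$. This is precisely the kind of postulation statement predicted by the Segre--Harbourne--Gimigliano--Hirschowitz conjecture: on a general blowup, a line bundle of this shape should have no higher cohomology unless the corresponding linear system is forced to be special by a $(-1)$-curve appearing with excessive multiplicity. Rounding the nefness inequality $\delta \ge \alpha_i$ yields $\lfloor \delta \rfloor \ge \lceil \alpha_i \rceil - 1$, which is typically enough to rule out the obvious $(-1)$-curve obstructions, and combining $\chi({\bf v}) = 0$ with Riemann--Roch should place $M$ in the non-special range. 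Making this precise in full generality is essentially equivalent to SHGH, and this is the main obstacle and the reason the statement is phrased as a conjecture rather than a theorem; the unconditional instances correspond to the cases where SHGH is already known, such as the del Pezzo regime already handled by Theorem~\ref{thmIntrodelPezzo} or blowups at few points.

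Granting the cohomology vanishing of $M$, Theorem~\ref{thmIntroGeneral} produces a sheaf $\cE \in \cP_{X, L-E_1}({\bf v})$ with no cohomology. To conclude weak Brill-Noether for $M_{X, H}({\bf v})$ one must in addition exhibit such an $\cE$ that is $H$-semistable. The hypothesis $H \cdot (K_X + F) < 0$ is precisely the Hirschowitz--Laszlo--Walter positivity condition needed for this: under this inequality, the locus of $H$-semistable sheaves forms a dense open substack of $\cP_{X, F}({\bf v})$, and hence the generic point of $M_{X, H}({\bf v})$ inherits the cohomology vanishing by semicontinuity. Thus the two substantive ingredients are (i) SHGH for the auxiliary line bundle $M$, which is the hard part, and (ii) the standard compatibility between the $F$-prioritary stack and $H$-stability guaranteed by the numerical condition on $H$.
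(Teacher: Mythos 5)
The statement you are addressing is stated in the paper as a \emph{conjecture}; the paper offers no proof of it, and explicitly presents it as a higher-rank analogue of the Segre--Harbourne--Gimigliano--Hirschowitz conjecture. Your proposal is therefore not being measured against an existing argument, and by your own admission it is not a proof: the essential step --- vanishing of higher cohomology for the rounded line bundle $M=\lfloor\delta\rfloor L-\sum\lceil\alpha_i\rceil E_i$ --- is deferred to SHGH, which is open. That part of your assessment is sound, and your derivation of $\alpha_i\geq 0$ and $\delta\geq\alpha_i$ from nefness, as well as the role of the condition $H\cdot(K_X+F)<0$ via Walter's theorem, are both correct.

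However, there is a concrete flaw in the proposed reduction that persists even if you grant SHGH: the hypothesis of Theorem \ref{thmIntroGeneral} can genuinely fail for nef $c_1({\bf v})$, because rounding can lose up to $3$ against a $(-1)$-curve of the form $L-E_i-E_j$. For example, take $k=2$, $r=4$, $c_1({\bf v})=10L-5E_1-5E_2$, so $\nu({\bf v})=2.5L-1.25E_1-1.25E_2$ is nef ($\nu\cdot(L-E_1-E_2)=0$). Then $M=2L-2E_1-2E_2=2(L-E_1-E_2)$ has $\chi(M)=0$ but $h^0(M)\geq 1$, hence $h^1(M)>0$, and Theorem \ref{thmIntroGeneral} does not apply. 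This is not an SHGH failure --- the speciality here is forced by the $(-1)$-curve $L-E_1-E_2$ meeting $M$ in degree $-2$ --- so no amount of generality of the points rescues the rounding. Note that this very surface is a degree $7$ del Pezzo, where the paper \emph{does} prove the result (Theorem \ref{thmIntrodelPezzo}), but by a different mechanism: Proposition \ref{prop-delPezzoRounding} decomposes $c_1({\bf v})$ as a sum of $r$ line bundles of nearly equal $(-K_X)$-degree (using classes such as $E_1+E_2$ and the Weyl group action), rather than perturbing a single rounded bundle. So if you want a credible reduction of the conjecture, the auxiliary combinatorial statement should be about good direct sums of line bundles in the sense of Definition \ref{def-good} (i.e., an extension of Proposition \ref{prop-delPezzoContained} to general blowups), not about the single rounded class $M$; the latter is strictly weaker than what is needed and already fails in cases the paper handles.
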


We use two techniques to prove weak Brill-Noether Theorems. First, we give a resolution of the general sheaf on $M_{X, H}({\bf v})$ in terms of a strong exceptional collection satisfying certain cohomology vanishing properties. This method allows us to prove Theorem \ref{thmIntroHirzebruch} and show weak Brill-Noether on arbitrary rational surfaces provided ${\bf v}$ satisfies certain inequalities. The advantage of this method is that it gives a convenient resolution of the general sheaf in $M_{X, H}({\bf v})$. As a consequence, it shows that the moduli space is unirational. The disadvantage is that this method is only applicable when the surface $X$ admits a strong exceptional collection of the desired form. 

Second, we construct an explicit prioritary sheaf with vanishing cohomology as a sum of line bundles.  Walter \cite[Proposition 2]{Walter} proves that  on a birationally ruled surface the stack parameterizing sheaves prioritary with respect to the fiber class  is smooth and irreducible. Assuming that the stable sheaves in $M_{X, H}({\bf v})$ are prioritary, to prove weak Brill-Noether, it suffices to exhibit one prioritary sheaf with vanishing cohomology. Constructing prioritary sheaves is much easier than constructing stable sheaves. In particular, under suitable assumptions, one may construct prioritary sheaves as sums of line bundles. The problem then reduces to the combinatorial problem of finding a prioritary combination of line bundles with no higher cohomology that has the same rank and first Chern class as ${\bf v}$. We solve this problem explicitly for del Pezzo surfaces of degree at least 4. Both of these techniques are applicable much more generally. However, to minimize the  combinatorial complexity, we make additional assumptions on the Chern character ${\bf v}$ when convenient.    

\subsection*{The organization of the paper} In \S \ref{secPrelim}, we collect basic facts about moduli spaces of sheaves and the cohomology of line bundles on rational surfaces. In \S \ref{secResolutions}, we introduce our first method for proving weak Brill-Noether and characterize Chern characters on Hirzebruch surfaces that satisfy weak Brill-Noether. In \S \ref{secPrioritary} and \S \ref{secDelPezzo}, we introduce our second method and show that Chern characters with nef $\ch_1$ satisfy weak Brill-Noether on del Pezzo surfaces of degree at least 4. 

\subsection*{Acknowledgements} We would like to express our gratitude to Lawrence Ein whose unfailing support has been invaluable in our careers.  We would also like to thank Daniel Levine for making comments and corrections on an earlier version of this work.

\section{Preliminaries}\label{secPrelim}
In this section, we recall standard facts concerning Hirzebruch and del Pezzo surfaces and cohomology of line bundles on rational surfaces. We refer the reader to \cite{Beauville}, \cite{CoskunScroll}, \cite{CoskundelPezzo} or \cite{Hartshorne} for more detailed expositions. 

\subsection*{Hirzebruch surfaces}
Let $e \geq 0$ be a nonnegative integer. Let $\F_e$ denote the Hirzebruch surface $\PP(\OO_{\PP^1} \oplus \OO_{\PP^1}(e))$. When $e \geq 1$, let $E$ be the class of the unique section of self intersection $E^2 = -e$ and let $F$ denote the class of a fiber of the projection to $\PP^1$. The surface $\F_0$ is isomorphic to $\PP^1 \times \PP^1$. In that case, let $E$ and $F$ denote the classes of the two rulings. Then $$\Pic(\F_e) \cong \ZZ E \oplus \ZZ F  \quad \mbox{with} \quad E^2 = -e, \quad E \cdot F = 1, \quad F^2 =0.$$ By adjunction,  $$K_{\F_e} = -2 E - (e+2) F.$$ Consequently,  the Riemann-Roch Theorem implies that
$$\chi(\OO_{\F_e}(aE+bF)) = (a+1)(b+1) - e \frac{a (a+1)}{2}.$$
The effective cone of $\F_e$ is generated by $E$ and $F$, consequently $$H^0(\F_e, \OO_{\F_e}(aE+bF)) \not= 0 \quad \mbox{if and only if} \quad a, b \geq 0.$$ By Serre duality, $$H^2(\F_e, \OO_{\F_e}(aE + bF)) \not= 0 \quad \mbox{if and only if} \quad a \leq -2  \ \mbox{and} \ b \leq -2-e.$$  Hence, to compute the cohomology of all line bundles, it suffices to assume that  $a \geq -1$. In this case, since $H^2$ vanishes and we have computed the Euler characteristic, specifying $h^0$ determines the dimension of all cohomology groups. The following theorem (see \cite{CoskunScroll}, \cite[\S V.2]{Hartshorne}) summarizes the answer. 

\begin{theorem}\label{thmCohomologyH}
Let $\OO_{\F_e}(aE + bF)$ be a line bundle on the Hirzebruch surface $\F_e$ with $a\geq -1$. Then:
\begin{enumerate}
\item $h^i(\F_e, \OO_{\F_e}(-E+bF))=0$ for $0 \leq i \leq 2$ and all $b$.
\item $h^0(\F_e, \OO_{\F_e}(bF)) = b+1$ if $b\geq -1$ and $0$ otherwise. In particular, $h^i(\F_e, \OO_{\F_e}(-F))=0$ for $0 \leq i \leq 2$. 
\item We may assume that  $a\geq 1$ and $b \geq 0$. If $b < ae$, then $$h^0(\F_e,  \OO_{\F_e}(aE + bF)) = h^0(\F_e,  \OO_{\F_e}((a-1)E + bF)).$$ If $b\geq ae$, then  $$h^0(\F_e, \OO_{\F_e}(aE + bF))= \chi(\OO_{\F_e}(aE + bF)) = (a+1)(b+1) - e  \frac{a(a+1)}{2}.$$
\end{enumerate}
\end{theorem}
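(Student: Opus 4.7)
The plan is to use the projection $\pi \colon \F_e \to \PP^1$ together with the restriction sequence along the negative section
\[ 0 \to \OO_{\F_e}((a-1)E + bF) \to \OO_{\F_e}(aE + bF) \to \OO_E(aE + bF) \to 0, \]
where I identify $E \cong \PP^1$ and use $E^2 = -e$, $E \cdot F = 1$ to compute $\OO_E(aE + bF) \cong \OO_{\PP^1}(b - ae)$. The other ingredient is the projection formula: $\OO(bF) = \pi^*\OO_{\PP^1}(b)$, so $H^*(\F_e, \OO(bF)) = H^*(\PP^1, \OO(b))$, and more generally, since $\pi$ is a $\PP^1$-bundle with $\pi_*\OO_{\F_e} = \OO_{\PP^1}$ and $R^1\pi_*\OO_{\F_e} = 0$, one can control $R\pi_*$ by cohomology and base change on each fiber.

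For part (1), I would compute that $\OO(-E + bF)$ restricted to any fiber $F'$ is $\OO_{\PP^1}(-1)$, since $(-E + bF)\cdot F' = -1$. By cohomology and base change, $R\pi_*\OO(-E + bF) = 0$, so all cohomology on $\F_e$ vanishes. (Equivalently, one can use the sequence above with $a=0$ and observe that the restriction $H^i(\OO(bF)) \to H^i(\OO_E(b))$ is an isomorphism because $E$ is a section of $\pi$.) Part (2) is immediate from the projection formula. The vanishing of $H^i(\OO(-F))$ is the special case $b = -1$ of $h^i(\PP^1, \OO(b))$.

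For part (3), the first statement follows at once from the restriction sequence: when $b < ae$ we have $b - ae \leq -1$, so $H^0(\OO_E(b-ae)) = 0$ and the sequence yields $H^0((a-1)E + bF) \cong H^0(aE + bF)$. For the second statement, it suffices to show $h^1 = h^2 = 0$ under the hypothesis $b \geq ae$. The vanishing of $h^2$ follows from Serre duality,
\[ h^2(aE + bF) = h^0\bigl(-(a+2)E - (e+b+2)F\bigr), \]
which is zero because the effective cone of $\F_e$ is generated by $E$ and $F$ and the $E$-coefficient is strictly negative. For $h^1$, I proceed by induction on $a$: since $b - ae \geq 0$, we have $H^1(\OO_E(b - ae)) = 0$, so the sequence gives a surjection $H^1((a-1)E + bF) \twoheadrightarrow H^1(aE + bF)$. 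The base case $a = 1$ reduces to $h^1(\OO(bF)) = h^1(\PP^1, \OO(b)) = 0$ for $b \geq e \geq 0$, and in the inductive step the inequality $b \geq ae \geq (a-1)e$ allows us to invoke the hypothesis on $(a-1)E + bF$. Once $h^1 = h^2 = 0$ is established, $h^0$ equals $\chi$ by Riemann-Roch.

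The main obstacle here is not mathematical but notational: one must keep the conventions straight for $E^2 = -e$, $E \cdot F = 1$ so that the restriction $\OO_E(aE + bF)$ is correctly identified as $\OO_{\PP^1}(b - ae)$, and verify that each step of the recursion in part (3) stays inside the hypotheses $a \geq 0$ and $b \geq 0$ needed to either apply induction or fall back on part (2).
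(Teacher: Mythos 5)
Your proof is correct and follows essentially the same route as the paper: the restriction sequence along $E$ with $\OO_E(aE+bF)\cong\OO_{\PP^1}(b-ae)$, the pullback identification for $\OO(bF)$, and downward induction on $a$ to kill $H^1$ when $b\geq ae$. The only real difference is cosmetic: for part (1) you invoke fiberwise vanishing of $\OO_{\PP^1}(-1)$ and cohomology-and-base-change (or the section splitting), whereas the paper gets $h^0=h^2=0$ from the effective cone and Serre duality and then uses $\chi=0$ to conclude $h^1=0$; both are standard and equally short.
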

\begin{proof}
We have already observed that $H^i(\OO_{\F_e}(-E+bF))=0$ for $i=0,2$. The case $i=1$ follows from the fact that the Euler characteristic vanishes. This proves (1). 

Next, since $F$ is a pullback from the base, we have  $H^i(\F_e, \OO_{\F_e}(bF)) \cong H^i(\PP^1, \OO_{\PP^1}(b))$. Part (2) of the theorem follows.  

We can therefore assume that $a \geq 1$. If $b <0$, then $h^i(\F_e, \OO_{\F_e}(aE + bF))=0$ for $i=0,2$ and $h^1$ is determined by the Euler characteristic. Hence, we may assume that $a \geq 1$ and $b\geq 0$. If $E \cdot (aE + bF) = b - ae < 0$, then $E$ is in the base locus of the linear system and the map given by multiplication by a section $s_E$ of $\OO_{\F_e}(E)$  $$ H^0(\F_e, \OO_{\F_e}((a-1)E + bF)) \stackrel{s_E}{\longrightarrow}  H^0(\F_e, \OO_{\F_e}(a E + bF))$$ induces an isomorphism. Repeating this process inductively, we reduce to the case when $ae \leq b$. Consider the exact sequence $$0 \longrightarrow \OO_{\F_e}((a-1)E + bF) \longrightarrow \OO_{\F_e}(aE + bF) \longrightarrow \OO_{\PP^1}(b - ae) \longrightarrow 0.$$ If $b-ae \geq -1$, we have a surjection $$H^1(\F_e, \OO_{\F_e}((a-1)E + bF)) \rightarrow  H^1(\F_e, \OO_{\F_e}(aE + bF)) \rightarrow 0.$$ By inductively reducing $a$ to $0$, we conclude that $h^1(\F_e, \OO_{\F_e}(aE + bF))=0$. Consequently, $h^0(\F_e, \OO_{\F_e}(aE + bF))= \chi(\OO_{\F_e}(aE + bF)).$
\end{proof}

\subsection*{Blowups of $\PP^2$}
We next record several basic facts concerning the cohomology of line bundles on blowups of $\PP^2$.  Let $X$ be the blowup of $\PP^2$ at $k$ distinct points $p_1, \dots, p_k$. Let $L$ denote the pullback of the hyperplane class on $\PP^2$ and let $E_i$ denote the exceptional divisor lying over $p_i$. Then $$\Pic(X) \cong \ZZ L \oplus \bigoplus_{i=1}^k \ZZ E_i \ \mbox{with} \ L^2 =1, \ L \cdot E_i =0, \ E_i \cdot E_j = -\delta_{i,j},$$ where $\delta_{i,j}$ is the Kr\"{o}necker delta function. Let $D= \delta L - \sum_{i=1}^k \alpha_i E_i$ be an integral class on $X$. Since $K_X = -3L + \sum_{i=1}^k E_i$, by Riemann-Roch $$\chi(\OO_X(D))= \frac{(\delta+2)(\delta+1)}{2} - \sum_{i=1}^k \frac{\alpha_i (\alpha_i+1)}{2}.$$ If $D$ is effective, then $\delta \geq 0$. Otherwise, a general line with class $L$ would be a moving curve with $L \cdot D < 0$. In particular, by Serre duality, $H^2(X, \OO_X(D))=0$ if $\delta \geq -2$.  

\begin{example}[Del Pezzo surfaces]\label{ex-delPezzo}
Del Pezzo surfaces are smooth complex surfaces $X$ with ample anti-canonical bundle $-K_X$.  They consist of $\PP^1 \times \PP^1$ and the blowup of $\PP^2$ in fewer than 9 points in general position.  Since $\PP^1 \times \PP^1$ is also a Hirzebruch surface, we will concentrate on the surfaces $D_n$, the blowup of $\PP^2$ at $9-n$ general points.  The effective cone of curves on $D_n$ is spanned by the $(-1)$-curves and the nef cone is the dual cone consisting of classes that intersect $(-1)$-curves nonnegatively. The classes of $(-1)$-curves $C= aL - \sum_{i=1}^{9-n} b_i E_i$ on $D_n$ can be obtained by solving the equations $$C^2 = a^2 - \sum_{i=1}^{9-n} b_i^2, \quad -K_{D_n} \cdot C = 3a - \sum_{i=1}^{9-n} b_i =1.$$ For our purposes, it suffices to know that on $D_n$ for $n \geq 3$, the $(-1)$-curves are $E_i$ for $1 \leq i \leq 9-n$, $L-E_i -E_j$ for $i \not= j$ and $2L- E_a - E_b -E_c-E_d-E_e$, where $a,b,c,d,e$ are $5$ distinct indices (whenever the number of points is large enough for these classes to exist)  (see \cite{CoskundelPezzo}, \cite[\S V.4]{Hartshorne}).  
\end{example}

We will need the following cohomology computations. 

\begin{lemma}\label{lem-BasicBP^2}
Let $I \subset \{ 1, \dots, k\}$ be a possibly empty index set. Then:
\begin{enumerate}
\item We have   $H^i(X, \OO_X(D))=0$ for all $i$ if $D$ is one the following
$$-2H + \sum_{i \in I} E_i, \quad -H + \sum_{i\in I} E_i, \quad -E_j + \sum_{i \in I, i \not= j} E_i.$$
\item Assume $H^i(X, \OO_X(D))=0$ for $i>0$. If  $D \cdot E_j \geq 0$  (respectively, $D \cdot L \geq -2$), then $H^i(X, \OO_X(D+E_j))=0$  (respectively, $H^i(X, \OO_X(D+L))=0$) for $i>0$. 
\end{enumerate}
\end{lemma}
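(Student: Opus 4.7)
The plan is to prove both parts by reducing each to an elementary computation: Part (1) to a direct check that $\chi = H^0 = H^2 = 0$, and Part (2) to the standard restriction sequence for a smooth rational curve.

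\textbf{Part (1).} In each of the three families the Riemann--Roch formula recalled before the lemma gives $\chi(\OO_X(D)) = 0$ (for example, $D = -2L+\sum_{i\in I}E_i$ has $\delta=-2$ and $\alpha_i=-1$ for $i\in I$, so both summands in the Riemann--Roch expression vanish). So it suffices to verify $H^0 = H^2 = 0$. To kill $H^0$, I exhibit a moving effective curve $C$ with $C\cdot D<0$, which forces $D$ to be non-effective: for the first two families a general line in $|L|$ works since $L\cdot D = -2$ or $-1$ respectively; for $D = -E_j + \sum_{i\in I,\, i\neq j}E_i$, I use the class $L-E_j$, the proper transform of a general line through $p_j$, which moves and satisfies $(L-E_j)\cdot D = -1$. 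To kill $H^2$, I invoke Serre duality, $H^2(X,\OO_X(D)) \cong H^0(X,\OO_X(K_X-D))^*$, and run the same game: in each case $L\cdot(K_X-D) < 0$ (namely $-1$, $-2$, and $-3$), so $K_X-D$ is non-effective.

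\textbf{Part (2).} I would tensor the ideal sequence of $E_j$ with $\OO_X(D+E_j)$ to obtain
$$0\to \OO_X(D)\to \OO_X(D+E_j)\to \OO_{E_j}(D+E_j)\to 0.$$
Since $E_j\cong \PP^1$ and $E_j^2 = -1$, the quotient has degree $D\cdot E_j - 1\geq -1$ on $\PP^1$, so $H^1(\OO_{E_j}(D+E_j)) = 0$. The long exact sequence, together with the assumption $H^i(\OO_X(D)) = 0$ for $i>0$, then yields $H^i(\OO_X(D+E_j)) = 0$ for $i>0$. For the $L$-statement I replace $E_j$ by a general line $C\in |L|$ missing the blown-up points $p_1,\dots,p_k$ (such a $C$ exists because $|L|$ is basepoint-free and only finitely many lines pass through any $p_i$); then $C\cong \PP^1$ and $\OO_C(D+L)$ has degree $D\cdot L+1\geq -1$, so the same long exact sequence gives the conclusion.

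\textbf{Expected obstacle.} There is no real obstacle: both halves are standard surface-cohomology manipulations. The only risk is arithmetic slip-ups in the intersection numbers of Part (1), especially for the third family where $E_j$ appears with the opposite sign from the other exceptional components of $D$ and one must remember that the class $L-E_j$ (not $L$) is needed to detect non-effectivity.
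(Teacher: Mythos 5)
Your proof is correct, and for part (1) it takes a genuinely different route from the paper. The paper argues structurally: it first shows that $\OO_X(-C)$ has no cohomology for any smooth rational curve $C$ (via $0\to\OO_X(-C)\to\OO_X\to\OO_C\to 0$), which settles the case $I=\emptyset$ since $E_j$, $L$, $2L$ are represented by the exceptional curve, a line, and a conic; it then adds the $E_i$ one at a time, noting that when $E_i\cdot D=0$ the quotient in $0\to\OO_X(D)\to\OO_X(D+E_i)\to\OO_{\PP^1}(-1)\to 0$ has no cohomology. You instead verify $\chi(\OO_X(D))=0$ by Riemann--Roch and kill $H^0$ and $H^2$ by pairing $D$ and $K_X-D$ against moving irreducible curves ($L$, and for the third family the pencil $L-E_j$); your computations check out, including the correct choice of $L-E_j$ where $L\cdot D=0$ fails to detect non-effectivity. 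The paper's inductive exact-sequence argument has the advantage of transporting verbatim to part (2) and to the blowups of Hirzebruch surfaces in Lemma~\ref{lem-BasicH}; your computation is more self-contained for part (1) but requires a case-by-case choice of test curve. For part (2) your restriction-sequence argument (degree $\geq -1$ on a rational curve forces vanishing of $H^1$ of the quotient) is exactly what the paper intends by ``similar sequences imply the last statement,'' with the mild and correctly handled refinement that you must take a general member of $|L|$ avoiding the $p_i$ so that the restricted degree is $D\cdot L+1$.
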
  

\begin{proof}
If an effective class is represented by a smooth rational curve $C$ on a smooth rational surface $X$, then we claim that $\OO_X(-C)$ has no cohomology. Since $H^i(X, \OO_X) = H^i(C, \OO_C) =0$ for $i \geq 1$,  the natural sequence $$0 \to \OO_X(-C) \to  \OO_X \to \OO_C \to 0$$ implies that $H^i(X, \OO_X(-C))=0$ for all $i$. Since $E_i$, $H$ and $2H$ can be represented by the exceptional curve, a line and a conic, respectively, the proposition is true when $I= \emptyset$. If $D$ is a class such that $H^i(X, \OO_X(D)) =0$ for all $i$ and $E_j \cdot D=0$, then $H^i(X, \OO_X(D + E_j))=0$ for all $i$. To see this, consider the exact sequence $$0 \to \OO_X(D) \to \OO_X(D+E_j) \to \OO_{\PP^1}(-1) \to 0.$$ Since $\OO_X(D)$ and $\OO_{\PP^1}(-1)$ have no cohomology, $\OO_X(D+E_j)$ has no cohomology. Similar sequences imply the last statement. 
\end{proof}

\subsection*{Blowups of Hirzebruch surfaces} Since the blowup of $\F_0$ at one point is isomorphic to the blowup of $\PP^2$ at 2 points and $\F_1$ is isomorphic to the blowup of $\PP^2$ at one point, we may assume that $e \geq 2$.  Let $X$ be the blowup of $\F_e$ along $k$ distinct points $p_1, \dots, p_k$ which are not contained in the exceptional curve $E$. Then the Picard group of $X$ is the free abelian group generated by $E, F, E_1, \dots, E_k$, where $E$ and $F$ are the pullbacks of the two generators from $\F_e$ and $E_1, \dots, E_k$ are the exceptional divisors lying over $p_1, \dots, p_k$. The same argument as in Lemma \ref{lem-BasicBP^2} proves the following.

\begin{lemma}\label{lem-BasicH}
Let $I \subset \{ 1, \dots, k\}$ be a possibly empty index set. Then  $H^i(X, \OO_X(D))=0$ for all $i$ if $D$ is one the following
$$-E + mF + \sum_{i \in I} E_i \quad(m \in \ZZ), \qquad -F + \sum_{i\in I} E_i, \qquad -E_j + \sum_{i \in I, i \not= j} E_i.$$ Moreover, assume $H^i (X, \OO_X(D))=0$ for $i>0$ and $C$ is a rational curve with $C \cdot D \geq -C^2 -1$. Then $H^i (X, \OO_X(D+C))=0$ for $i>0$.
\end{lemma}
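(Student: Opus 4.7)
The plan is to adapt the proof of Lemma \ref{lem-BasicBP^2} essentially verbatim, replacing the base calculations on $\PP^2$ with the corresponding calculations on $\F_e$ from Theorem \ref{thmCohomologyH}, and then propagating the vanishings through smooth rational curves by standard restriction sequences. Throughout, let $\pi\colon X\to \F_e$ denote the blowup morphism.

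\medskip

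\noindent\emph{Step 1 (Base cases for (1)).} I first establish the result when the index set $I$ is as small as possible in each of the three listed forms. For the class $-E+mF$ with $m\in\Z$, Theorem \ref{thmCohomologyH}(1) gives $H^i(\F_e,\OO_{\F_e}(-E+mF))=0$ for all $i$. Since $\pi$ is a composition of point blowups, $\pi_*\OO_X=\OO_{\F_e}$ and $R^j\pi_*\OO_X=0$ for $j>0$, so the projection formula and Leray spectral sequence give $H^i(X,\pi^*\OO_{\F_e}(-E+mF))=0$ for all $i$. The same argument handles $-F$ using Theorem \ref{thmCohomologyH}(2). For $-E_j$, the curve $E_j\cong\PP^1$ is a $(-1)$-curve, and since $X$ is a smooth rational surface $H^i(X,\OO_X)=H^i(E_j,\OO_{E_j})=0$ for $i>0$, so the restriction sequence
$$0\to \OO_X(-E_j)\to \OO_X\to \OO_{E_j}\to 0$$
yields $H^i(X,\OO_X(-E_j))=0$ for all $i$.

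\medskip

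\noindent\emph{Step 2 (Adding exceptional divisors).} Given any of the three base classes $D_0$, I add the remaining $E_i$ for $i\in I$ one at a time. If $D'$ is a class already shown to have no cohomology and $j\in I$ is an index not yet used, then because $E_j$ is disjoint from $E$, from $F$, and from $E_i$ for $i\neq j$, we have $E_j\cdot D'=0$. Thus the restriction $\OO_{E_j}(D'+E_j)$ has degree $E_j\cdot(D'+E_j)=-1$ on $E_j\cong\PP^1$, so $H^i(E_j,\OO_{E_j}(D'+E_j))=0$ for all $i$. The sequence
$$0\to \OO_X(D')\to \OO_X(D'+E_j)\to \OO_{E_j}(D'+E_j)\to 0$$
then forces $H^i(X,\OO_X(D'+E_j))=0$ for all $i$, completing the induction and proving (1).

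\medskip

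\noindent\emph{Step 3 (The bootstrapping statement).} For the last assertion, let $C\cong \PP^1$ be a smooth rational curve with $C\cdot D\geq -C^2-1$. Then $(D+C)\cdot C=D\cdot C+C^2\geq -1$, so $\OO_C(D+C)\cong \OO_{\PP^1}(\text{deg}\geq -1)$ has vanishing $H^i$ for $i>0$. Taking cohomology of
$$0\to \OO_X(D)\to \OO_X(D+C)\to \OO_C(D+C)\to 0$$
and using the hypothesis $H^i(X,\OO_X(D))=0$ for $i>0$ gives $H^i(X,\OO_X(D+C))=0$ for $i>0$, as required.

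\medskip

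There is no real obstacle; the only point that deserves attention is the initial observation that cohomology of pulled-back line bundles from $\F_e$ is computed on $\F_e$, which is immediate from the blowup. Everything else is a mechanical repetition of the restriction-sequence technique used in Lemma \ref{lem-BasicBP^2}.
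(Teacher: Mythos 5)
Your proposal is correct and follows essentially the same route as the paper, which simply states that "the same argument as in Lemma \ref{lem-BasicBP^2} proves the following": reduce the base classes to known vanishings (via pullback from $\F_e$ and the restriction sequence for $-E_j$), then add exceptional divisors and curves one at a time using restriction sequences whose cokernels are line bundles of degree $\geq -1$ on $\PP^1$. The only implicit point, consistent with the paper's usage, is that the rational curve $C$ in the last assertion is taken to be smooth and irreducible.
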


\subsection*{Moduli spaces of vector bundles}
Next, we recall some basic facts concerning moduli spaces of Gieseker semistable sheaves and prioritary sheaves. We refer the reader to \cite{CoskunHuizenga},  \cite{Huizenga}, \cite{HuybrechtsLehn} and  \cite{LePotier} for details.

Let $(X, H)$ be a polarized, smooth projective surface. All the sheaves we consider will be pure-dimensional and coherent. If $\cE$ is a pure $d$-dimensional, coherent sheaf, then the Hilbert polynomial has the form
$$P_{\cE}(m) = \chi(\cE(mH)) = a_d \frac{ m^d}{ d!} + \mbox{l.o.t.}$$ The reduced Hilbert polynomial  of $\cE$ is defined by $p_{\cE} = P_{\cE} / a_d$. A sheaf $\cE$ is {\em Gieseker semistable} if for every proper subsheaf $\cF \subsetneq \cE$, we have $p_{\cF} \leq p_{\cE}$, where polynomials are compared for sufficiently large $m$. The sheaf is called {\em Gieseker stable} if for every proper subsheaf the inequality is strict. By theorems of Gieseker, Maruyama and Simpson, there exist projective moduli spaces $M_{X, H} ({\bf v})$ parameterizing $S$-equivalence classes of Gieseker semistable sheaves on $X$ with Chern character ${\bf v}$ (see \cite{HuybrechtsLehn} or \cite{LePotier}). 

It is often hard to verify the stability of a sheaf.  The following notion provides a more flexible alternative.  

\begin{definition}
Let $F$ be a line bundle on $X$. A torsion-free coherent sheaf $\cE$ is {\em $F$-prioritary} if $\Ext^2(\cE, \cE \otimes F^{-1}) =0$. 
\end{definition}

We denote the stack of  $F$-prioritary sheaves on $X$ with Chern character ${\bf v}$ by $\cP_{X, F} ({\bf v})$. The stack $\cP_{X,F} ({\bf v})$ is an open substack of the stack of coherent sheaves. In this paper, we will consider $F$-prioritary sheaves on (blowups of) $\F_{e}$ and  blowups of $\PP^2$, where $F$ is the fiber class on $\F_e$ and the class $L-E_1$ on a blowup of $\PP^2$. The class $F$ endows these surfaces with the structure of a birationally ruled surface.
The following theorem of Walter will be crucial to our arguments.  

\begin{theorem}[{\cite[Proposition 2]{Walter}}]\label{thm-Walter}
Let $X$ be a birationally ruled surface, let $F$ be the fiber class on $X$ and let ${\bf v}$ be a fixed Chern character of rank at least 2. Then the stack $\cP_{X, F}({\bf v})$ of $F$-prioritary sheaves is smooth and irreducible.
\end{theorem}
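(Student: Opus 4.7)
My plan is to handle smoothness and irreducibility separately, with smoothness following from a direct deformation-theoretic computation and irreducibility requiring a more geometric analysis via the ruling.

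For smoothness, the obstruction to deforming a coherent sheaf $\cE$ in the stack of coherent sheaves of character ${\bf v}$ lies in $\Ext^2(\cE,\cE)$. I will show that this group vanishes for every $\cE\in\cP_{X,F}({\bf v})$, which is more than enough. Choose a general fiber $F_0$ of the ruling $\pi\colon X\to C$, so that $F_0\cong\PP^1$, $F_0\sim F$, and $\cE$ is locally free in a neighborhood of $F_0$. From the Koszul sequence $0\to \OO_X(-F)\to \OO_X\to \OO_{F_0}\to 0$, tensoring with $\cE$ yields a short exact sequence
$$0\to \cE\otimes F^{-1}\to \cE\to \cE|_{F_0}\to 0,$$
and applying $\Hom(\cE,-)$ gives the long exact sequence
$$\Ext^2(\cE,\cE\otimes F^{-1})\to \Ext^2(\cE,\cE)\to \Ext^2(\cE,\cE|_{F_0}).$$
The first term vanishes by the $F$-prioritary hypothesis. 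For the third, the projection formula (applicable since $\cE$ is locally free near $F_0$) identifies $\Ext^2_X(\cE,\iota_*(\cE|_{F_0}))$ with $H^2(F_0,\cE|_{F_0}^\vee\otimes \cE|_{F_0})$, which vanishes because $F_0\cong\PP^1$ has cohomological dimension one. Hence $\Ext^2(\cE,\cE)=0$, the stack is smooth at $[\cE]$, and its dimension at $[\cE]$ equals the locally constant expression $-\chi(\cE,\cE)$.

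For irreducibility, I would exploit the ruling to parametrize prioritary sheaves by their restriction to a general fiber. Any $\cE\in\cP_{X,F}({\bf v})$ restricts on a general $F_0\cong\PP^1$ to $\bigoplus_{i=1}^r\OO_{\PP^1}(a_i)$ with $a_1\geq\cdots\geq a_r$, and the splitting type is upper-semicontinuous in the Harder--Narasimhan sense. The strategy has three steps:
\begin{enumerate}
\item Show that the locus where $\cE|_{F_0}$ has a fixed splitting type $(a_1,\ldots,a_r)$ is locally closed and irreducible. One constructs such sheaves as iterated elementary modifications of a fixed reference sheaf (e.g.\ a direct sum of line bundles on $X$ whose restriction to $F_0$ realizes the desired splitting), and the parameter space of such modifications, a tower of Quot-type schemes, is irreducible.
\item Show that unbalanced splitting types deform to more balanced ones. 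If $a_1-a_r\geq 2$, construct an explicit first-order deformation of $\cE$ that strictly decreases $a_1-a_r$: the standard trick uses nonzero elements of $\Ext^1$ arising from maps between the extremal summands of $\cE|_{F_0}$ propagated to neighborhoods of $F_0$. Smoothness (established above) ensures these first-order deformations integrate.
\item Conclude that the balanced locus is open, dense, and irreducible, and therefore the full stack is irreducible.
\end{enumerate}

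The main obstacle is step (ii) of irreducibility: producing honest global deformations that balance the restriction. The first-order deformations come from $\Ext^1(\cE,\cE)$, which is controlled by the long exact sequence above, but one must verify both that the resulting deformation really changes the splitting type in the claimed direction and that it stays in the prioritary (in particular, torsion-free) locus; the former requires a careful identification of the "balancing" elements of $\Ext^1(\cE|_{F_0},\cE|_{F_0})$ with elements of $\Ext^1(\cE,\cE)$ via the boundary map, and the latter is where the rank hypothesis $r\geq 2$ enters essentially (in rank one every torsion-free sheaf is an ideal sheaf and there is no room to balance). Once these deformations are in hand, smoothness upgrades them to actual one-parameter families specializing a balanced sheaf to the given unbalanced one, giving the required irreducibility.
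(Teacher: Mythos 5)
First, note that the paper offers no proof of this statement: it is imported verbatim from Walter \cite[Proposition 2]{Walter}, so your sketch can only be measured against Walter's argument, not against anything in the text. Your smoothness half is correct and is essentially Walter's: torsion-freeness lets you pick $F_0$ inside the locally free locus, the restriction sequence is exact, the left-hand $\Ext^2$ vanishes by the prioritary hypothesis, and adjunction for the closed embedding $\iota\colon F_0\hookrightarrow X$ gives $\Ext^2_X(\cE,\iota_*(\cE|_{F_0}))\cong H^2(F_0,\cE|_{F_0}^\vee\otimes\cE|_{F_0})=0$ because $F_0$ is a curve. Hence $\Ext^2(\cE,\cE)=0$ and the open substack $\cP_{X,F}({\bf v})$ of the stack of coherent sheaves is smooth of dimension $-\chi(\cE,\cE)$ at every point.

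The irreducibility half follows the right strategy but leaves both of its load-bearing steps unproved. In step (ii) you never actually invoke the prioritary hypothesis, yet it is precisely what makes the balancing work: one step earlier in your long exact sequence, $\Ext^1(\cE,\cE)\to\Ext^1(\cE,\cE|_{F_0})\to\Ext^2(\cE,\cE(-F))=0$ shows that every first-order deformation of $\cE|_{F_0}$ --- in particular one with nonzero component in $\Ext^1_{\P^1}(\OO(a_r),\OO(a_1))$, which strictly improves the splitting type --- lifts to a first-order deformation of $\cE$ and then integrates by smoothness; for an arbitrary torsion-free sheaf this surjectivity fails and there is no reason the restriction can be balanced. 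You must make this explicit, since it is the only place the defining condition of the stack enters the irreducibility argument. More seriously, step (i) --- which is really only needed for the single minimal (balanced) stratum, since iterating step (ii) shows every sheaf is a specialization from that stratum --- is the technical core of Walter's proof and cannot be dispatched by asserting that ``a tower of Quot-type schemes is irreducible.'' A fixed reference sheaf cannot work, because $c_2({\bf v})$ is unbounded relative to any fixed sheaf; what is actually required is to twist so that the balanced type becomes $(1,\ldots,1,0,\ldots,0)$, push forward along the ruling to the base curve, and exhibit the stratum as an irreducible family built from bundles on the curve together with relative Quot/elementary-modification data. Until that identification is supplied, the irreducibility claim is not established.
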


In particular, if $H$ is an ample divisor on a birationally ruled surface $X$ such that $H \cdot (K_X + F) < 0$ and ${\bf v}$ is a stable Chern character of rank at least 2, then the moduli space $M_{X, H}({\bf v})$ is irreducible and normal \cite[Theorem 1]{Walter}. The inequality $H \cdot (K_X + F) < 0$ guarantees that Gieseker semistable sheaves are $F$-prioritary. When Walter's Theorem applies, we can construct a prioritary sheaf with no cohomology to deduce that general Gieseker semistable sheaves with the same invariants have no cohomology. The advantage is that prioritary sheaves are much easier to construct than semistable sheaves. 

In our computations, we will use the following consequence of Riemann-Roch repeatedly.

\begin{lemma}\label{lem-eulerchar}
Let $\cE$ be a sheaf of rank $r$ on a surface $X$ such that $\chi(\cE)=0$ and let $M$ be a line bundle. Then $$\chi(\cE \otimes M) = \ch_1(E) \cdot \ch_1( M)  + r(\chi(M) - \chi(\OO_X)).$$
\end{lemma}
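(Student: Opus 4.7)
The plan is a direct application of Hirzebruch--Riemann--Roch on the surface $X$. My key move is to consider the virtual Chern-character class
\[
\alpha := \ch(\cE \otimes M) - \ch(\cE) - r\,\ch(M) + r\,\ch(\OO_X) \in H^{*}(X,\Q),
\]
and to show that it has vanishing components in degrees $0$ and $1$, with degree $2$ component equal to $\ch_1(\cE) \cdot \ch_1(M)$. This uses only the multiplicativity $\ch(\cE \otimes M) = \ch(\cE)\cdot \ch(M)$ and the expansion $\ch(M) = 1 + \ch_1(M) + \tfrac12 \ch_1(M)^2$. The ranks manifestly cancel; the degree $1$ terms cancel because $\ch_1(\cE \otimes M) = \ch_1(\cE) + r\,\ch_1(M)$; and in degree $2$ the two $\ch_2(\cE)$ contributions cancel, the two $\tfrac{r}{2}\ch_1(M)^2$ contributions cancel, leaving exactly the cross term $\ch_1(\cE)\cdot\ch_1(M)$.

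Applying HRR term by term and additivity of the Euler characteristic on $K$-theory classes then gives
\[
\chi(\cE \otimes M) - \chi(\cE) - r\,\chi(M) + r\,\chi(\OO_X) \;=\; \int_X \alpha \cdot \td(X) \;=\; \ch_1(\cE)\cdot \ch_1(M),
\]
since $\alpha$ is supported in top degree and thus pairs only against the degree $0$ part of $\td(X)$, which equals $1$. Substituting the hypothesis $\chi(\cE) = 0$ rearranges this to the stated formula.

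There is no substantive obstacle; the entire content is HRR on a surface. The only thing to watch out for is the bookkeeping of Todd class contributions, and the virtual-class trick above is designed precisely to sidestep this by absorbing all Todd-class dependence into the already-named Euler characteristics $\chi(M)$ and $\chi(\OO_X)$. Consequently the formula holds on any smooth projective surface, with the vanishing $\chi(\cE) = 0$ being used only in the final substitution.
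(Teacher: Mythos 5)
Your proof is correct and is essentially the paper's own argument: the paper likewise invokes Hirzebruch--Riemann--Roch, writes $\chi(\cE\otimes M)=\int_X \ch(\cE)\ch(M)\td(X)$, and says the formula follows by expanding. Your virtual-class bookkeeping is just a tidier way of carrying out that same expansion, absorbing the Todd-class terms into $\chi(M)$ and $\chi(\OO_X)$.
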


\begin{proof}
By the Hirzebruch-Riemann-Roch Theorem $$\chi(\cE \otimes M) = \int_X \ch(\cE \otimes M) \td(X) = \int_X \ch(\cE) \ch(M) \td(X).$$ The formula follows immediately by expanding this expression. 
\end{proof}

\section{Strong exceptional collections and resolutions}\label{secResolutions}
In this section, we introduce our first method for proving weak Brill-Noether theorems. This method provides a resolution of the general sheaf of the moduli space in terms of a strong exceptional collection that satisfies certain cohomological properties. The method gives a unirational parameterization of (a component of) the moduli space. The disadvantage is that it is only applicable when a suitable strong exceptional collection exists. We begin by recalling some standard terminology.

\begin{definition}\label{defStrongExceptional} 
A sheaf $A$ is {\em exceptional} if $\Hom(A,A)= \C$ and $\Ext^i(A,A)=0$ for $i \not= 0$. An ordered collection $(A_1, \dots, A_m)$ of exceptional sheaves on a projective variety $X$ is {\em an exceptional collection} if $$\Ext^i(A_t, A_s) =0 \  \mbox{for} \  1 \leq s < t \leq m\  \mbox{and all} \  i.$$ The exceptional collection is {\em strong} if in addition $$\Ext^i(A_s, A_t)=0 \ \mbox{for} \ 1 \leq s < t \leq m \ \mbox{and all}  \ i>0.$$ 
\end{definition}

\begin{example}\label{exSEHirzebruch}
On the Hirzebruch surface $\F_e$, the collection of line bundles
$$\OO_{\F_e}(-E-(e+1)F), \quad \OO_{\F_e}(-E-eF), \quad  \OO_{\F_e}(-F), \quad  \OO_{\F_e}$$ is a strong exceptional collection.  If $1 \leq s < t \leq 4$,  $$\Ext^i(A_t, A_s) \cong H^i(\F_e, -F) \ \mbox{or}  \ H^i(\F_e, -E+b F) \ \mbox{for some} \ b.$$ Since these cohomology groups vanish by Theorem \ref{thmCohomologyH}, we conclude that the collection is exceptional. Similarly, $$\Ext^i(A_s, A_t) \cong H^i(\F_e, F) \ \mbox{or} \ H^i(\F_e, E+b F) \ \mbox{for some} \ b\geq e-1.$$ Since these cohomology groups vanish for $i>0$ by Theorem \ref{thmCohomologyH}, we conclude that the collection is a strong exceptional collection.
\end{example}

\begin{example}\label{exBlowupP2}
Let $\Gamma= \{ p_1, \dots, p_k\}$ be a set of $k$ distinct points on $\PP^2$ and let $X$ be the blowup of $\PP^2$ along $\Gamma$.  Let $E_i$ denote the exceptional divisor lying over $p_i$ and let $L$ be the pullback of the hyperplane class from $\PP^2$. Then
$$\OO_X(-2L), \quad \OO_X(-L), \quad \OO_X(-E_1),\quad  \OO_X(-E_2), \quad \dots \quad \OO_X(-E_k), \quad \OO_{X}$$ is a strong exceptional collection on $X$. This can be checked as follows (see also Bondal's Theorem \cite{Bondal}, \cite{KuleshovOrlov}). Let $I \subset \{1, \dots, k\}$ be an index set. By Lemma \ref{lem-BasicBP^2}, $H^j(X, \OO_X(D))=0$ for all $j$ and $D$  of the form $$-2L + \sum_{i\in I} E_i, \quad  -L + \sum_{i\in I} E_i, \quad -E_i, \quad \mbox{or} \quad  E_l -E_i, l\not= i.$$  Since for $1 \leq s<t\leq m$, each $\Ext^i(A_t, A_s)$ is isomorphic to one of these cohomology groups, we conclude that the collection is exceptional. Similarly,  $H^j(X, \OO_X(D)) =0$ for $j > 0$ and $D$ of the form
$$2L, \quad 2L-E_i, \quad L, \quad L-E_i, \quad E_l - E_i \quad  \mbox{or} \quad E_l.$$
Since the groups $\Ext^i(A_s, A_t)$ for $1 \leq s < t \leq m$ are isomorphic to one of these groups, we conclude that the collection is a strong exceptional collection.
\end{example}

\begin{notation}
Throughout this section, let $X$ be a smooth projective surface and let $(A_1, \dots,  A_m, \OO_X)$ be a strong exceptional collection on $X$. Suppose that a sheaf $\cE$ has a resolution of the form  
\begin{equation}\label{eq-res}
0 \longrightarrow \bigoplus_{i=1}^j A_i^{\oplus a_i} \stackrel{\phi}{\longrightarrow}  \bigoplus_{i=j+1}^{m} A_i^{\oplus a_i} \longrightarrow \cE \longrightarrow 0.
\end{equation}
Notice that $\OO_X$ is the last member of the strong exceptional collection and does not occur in the resolution of $\cE$.
\end{notation}

\begin{lemma}\label{lem-exponents}
Let $\cE$ be a sheaf with a resolution given by Sequence (\ref{eq-res}). Then $H^i(X, \cE)=0$ for all $i$. The exponents $a_i$ are determined by the following relations:
$$a_s = - \chi(\cE, A_s) - \sum_{i=1}^{s-1} a_i \hom(A_i, A_s) \ \  \mbox{for}  \ \ 1 \leq s \leq j  \ \ \mbox{and}$$   $$a_t = \chi(A_t, \cE) - \sum_{i=t+1}^m a_i \hom(A_t, A_i) \ \ \mbox{for} \ \ j+1 \leq t \leq m.$$ 
\end{lemma}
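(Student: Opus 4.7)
The plan is to prove both parts by applying suitable $\Hom$ functors to the displayed short exact sequence and exploiting the vanishings built into the strong exceptional collection. Write the resolution as $0 \to K \to F \to \cE \to 0$, where $K = \bigoplus_{i=1}^j A_i^{\oplus a_i}$ and $F = \bigoplus_{i=j+1}^m A_i^{\oplus a_i}$.

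For the cohomology vanishing, I apply $\Hom(\OO_X, -)$. Since $\OO_X$ sits at the end of the collection $(A_1, \dots, A_m, \OO_X)$, the exceptional condition, with $\OO_X$ playing the role of the later sheaf and each $A_k$ ($k \leq m$) the earlier one, yields $\Ext^i(\OO_X, A_k) = 0$ for all $i$. Hence $\Ext^i(\OO_X, K) = \Ext^i(\OO_X, F) = 0$ for every $i$, and the resulting long exact sequence forces $H^i(X, \cE) = \Ext^i(\OO_X, \cE) = 0$.

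For the exponent formulas, I compute Euler characteristics after applying $\Hom(-, A_s)$ or $\Hom(A_t, -)$ to the resolution. When $1 \leq s \leq j$, apply $\Hom(-, A_s)$ to get $\chi(\cE, A_s) = \chi(F, A_s) - \chi(K, A_s)$. Each summand $a_i \chi(A_i, A_s)$ of $\chi(F, A_s)$ has $i > j \geq s$, so vanishes by the exceptional condition, giving $\chi(F, A_s) = 0$. In $\chi(K, A_s) = \sum_{i=1}^j a_i \chi(A_i, A_s)$, the terms with $i > s$ again vanish, the term $i = s$ contributes $a_s$ since $A_s$ is exceptional, and for $i < s$ strong exceptionality collapses $\chi(A_i, A_s)$ to $\hom(A_i, A_s)$. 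Solving for $a_s$ produces the first identity. For $j+1 \leq t \leq m$, the symmetric argument applies $\Hom(A_t, -)$: now $\chi(A_t, K)$ vanishes because $t > i$ for every $A_i$ in $K$, while on the $F$-side the $i = t$ term gives $a_t$ and the $i > t$ terms reduce to $\hom(A_t, A_i)$ via strong exceptionality.

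The argument is essentially formal, so the only real obstacle is careful bookkeeping: at each step one must track which version of the vanishing is being invoked, namely ordinary exceptional (killing every $\Ext^i$ from a later sheaf to an earlier one) versus strong exceptional (killing only positive-degree $\Ext$'s from earlier to later), and keep straight on which side of each Euler characteristic the $a_i$ lands. No further input beyond the definitions is required.
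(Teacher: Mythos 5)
Your proposal is correct and follows essentially the same route as the paper: apply $\Ext(\OO_X,-)$ for the cohomology vanishing, then $\Ext(-,A_s)$ and $\Ext(A_t,-)$ and use additivity of the Euler characteristic together with the exceptional/strong-exceptional vanishings to isolate each exponent. Your bookkeeping of which vanishing (full exceptional for later-to-earlier, positive-degree only for earlier-to-later) applies where is accurate, and in fact slightly more carefully stated than in the paper's own write-up.
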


\begin{proof}
Since $(A_1, \dots, A_m, \OO_X)$ is a strong exceptional collection, $\Ext^i(\OO_X, A_s)=0$ for all $1 \leq s \leq m$ and all $i$. Applying $\Ext(\OO_X, -)$ to the Sequence (\ref{eq-res}), we conclude that $$\Ext^i(\OO_X, \cE) = H^i(X, \cE)=0.$$ 
To compute the exponents $a_s$ with $1 \leq s \leq j$, we apply $\Ext(-, A_s)$ to the same sequence. Since $\Ext^k(A_i, A_s)=0$ for all $k$ if $i>s$ and $\Ext^k(A_i, A_s)=0$ for $k > 0$, we obtain the relation
$$\chi(\cE, A_s) + \sum_{i=1}^s a_i \hom(A_i, A_s) = 0.$$ The desired formula follows from the fact that  $\hom(A_s, A_s)=1$. Similarly, to compute the exponents $a_t$ with $j< t <m$, we apply $\Ext(A_t, -)$ to the sequence. We deduce that $$\chi(A_t, \cE) = a_t + \sum_{i=t+1}^m a_i \hom(A_t, A_i).$$ This concludes the proof of the lemma.
\end{proof}

\begin{lemma}\label{lem-Prioritary}
Let $\cE$ be a locally free sheaf with a resolution given by Sequence (\ref{eq-res}) and let $F$ be a line bundle on $X$. Assume that 
\begin{enumerate}
\item $\Ext^1(A_i, A_s \otimes F^{-1})=0$  for $1 \leq i \leq j$ and $j< s \leq m$, and 
\item $\Ext^2 (A_i, A_s \otimes F^{-1})=0$ for $j< i,s \leq m$. 
\end{enumerate}
Then $\cE$ is $F$-prioritary.
\end{lemma}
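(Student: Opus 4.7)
The plan is to show $\Ext^2(\cE, \cE \otimes F^{-1}) = 0$ by resolving $\cE$ on both arguments of $\Hom(-,-)$ in a carefully chosen order, so that hypotheses (1) and (2) cover exactly the Ext groups that arise. Write the resolution (\ref{eq-res}) as $0 \to K \to B \to \cE \to 0$, with $K = \bigoplus_{i=1}^j A_i^{\oplus a_i}$ and $B = \bigoplus_{i=j+1}^m A_i^{\oplus a_i}$.

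First, I tensor the resolution with the line bundle $F^{-1}$ (which is exact) to obtain
$$0 \to K \otimes F^{-1} \to B \otimes F^{-1} \to \cE \otimes F^{-1} \to 0,$$
and apply $\Hom(\cE, -)$. The long exact sequence contains
$$\Ext^2(\cE, B \otimes F^{-1}) \to \Ext^2(\cE, \cE \otimes F^{-1}) \to \Ext^3(\cE, K \otimes F^{-1}).$$
Since $\cE$ is locally free and $X$ is a smooth projective surface, $\Ext^3(\cE, K \otimes F^{-1}) = H^3(X, \cE^{\vee} \otimes K \otimes F^{-1}) = 0$. Hence it suffices to show $\Ext^2(\cE, A_s \otimes F^{-1}) = 0$ for each $s > j$.

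For this reduced problem, I apply $\Hom(-, A_s \otimes F^{-1})$ to the original resolution (\ref{eq-res}), obtaining
$$\Ext^1(K, A_s \otimes F^{-1}) \to \Ext^2(\cE, A_s \otimes F^{-1}) \to \Ext^2(B, A_s \otimes F^{-1}).$$
The left group decomposes as $\bigoplus_{i \leq j} \Ext^1(A_i, A_s \otimes F^{-1})^{\oplus a_i}$ and vanishes by hypothesis (1), since $i \leq j$ and $s > j$. The right group decomposes as $\bigoplus_{t > j} \Ext^2(A_t, A_s \otimes F^{-1})^{\oplus a_t}$ and vanishes by hypothesis (2), since both $t$ and $s$ exceed $j$. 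This forces the middle group to vanish, completing the proof.

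The main subtlety is the order of the two reductions. If one naively resolves the first argument of $\Ext^2(\cE, \cE \otimes F^{-1})$ in one step, one runs into the need to vanish $\Ext^2(A_i, A_t \otimes F^{-1})$ for $i, t \leq j$, which is \emph{not} among the hypotheses. The trick is to first peel off the right-hand $\cE$ down to $B \otimes F^{-1}$, using $\Ext^3 = 0$ on a surface to absorb the contribution from $K$, and only then resolve on the left; the indices then land precisely in the ranges $(i \leq j, \, s > j)$ and $(t, s > j)$ for which hypotheses (1) and (2) are tailored.
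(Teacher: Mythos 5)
Your proof is correct and follows essentially the same route as the paper's: first reduce $\Ext^2(\cE,\cE\otimes F^{-1})$ to $\Ext^2(\cE, A_s\otimes F^{-1})$ for $s>j$ by applying $\Hom(\cE,-)$ to the twisted resolution, then kill the latter by applying $\Hom(-,A_s\otimes F^{-1})$ to the resolution and invoking hypotheses (1) and (2). Your explicit justification of the vanishing of $\Ext^3$ on a surface is a detail the paper leaves implicit, but the argument is the same.
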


\begin{proof}
We need to check that $\Ext^2(\cE, \cE\otimes F^{-1})=0$. By applying $\Ext(\cE, -)$ to the sequence $$ 0 \longrightarrow \bigoplus_{i=1}^j A_i^{\oplus a_i}\otimes F^{-1} \longrightarrow  \bigoplus_{i=j+1}^m A_i^{\oplus a_i} \otimes F^{-1}\longrightarrow \cE\otimes F^{-1} \longrightarrow 0,$$ it suffices to check $\Ext^2(\cE, A_s \otimes F^{-1})=0$ for $s>j$. We now apply $\Ext(-, A_s \otimes F^{-1})$ to Sequence (\ref{eq-res}) to obtain  $$\bigoplus_{1 \leq i \leq j} \Ext^1(A_i, A_s \otimes F^{-1})^{\oplus a_i}\longrightarrow \Ext^2(\cE, A_s \otimes F^{-1}) \longrightarrow \bigoplus_{j<i\leq m} \Ext^2(A_i, A_s \otimes F^{-1})^{\oplus a_i}.$$ Since the terms on the right and left are zero by assumption, we conclude that $\cE$ is prioritary with respect to $F$.
\end{proof}

\begin{proposition}\label{lem-complete}
Let $\cE$ be a locally free sheaf with a resolution given by Sequence (\ref{eq-res}). Assume that $\cE$ is $F$-prioritary and has Chern character ${\bf v}$. 
Set $$U= \bigoplus_{i=1}^j A_i^{\oplus a_i} \ \ \mbox{and} \ \ V= \bigoplus_{i=j+1}^m A_i^{\oplus a_i}.$$ Then the open set $S \subset \Hom (U,V)$ parameterizing locally free  $F$-prioritary sheaves is a complete family of $F$-prioritary sheaves. If the stack $\cP_{X, F} ({\bf v})$ is irreducible, then the general sheaf in $\cP_{X, F} ({\bf v})$ has no cohomology. 
\end{proposition}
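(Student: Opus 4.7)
The plan is to split the proof into two parts: first, verify that $S$ is a complete family in the sense that the Kodaira-Spencer map is surjective at every point; second, use this together with Walter's smoothness theorem (Theorem \ref{thm-Walter}) and the assumed irreducibility to conclude that the general sheaf in $\cP_{X,F}({\bf v})$ is isomorphic to some cokernel appearing in the family, and therefore has no cohomology by Lemma \ref{lem-exponents}.

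First I observe that $S$ is nonempty: the given resolution of $\cE$ exhibits an explicit point $\phi\in \Hom(U,V)$ whose cokernel is $\cE$, and by hypothesis $\cE$ is locally free and $F$-prioritary, so $\phi\in S$. The $F$-prioritary and local-freeness loci are open in $\Hom(U,V)$, so $S$ is open. For each $\phi\in S$ let $\cE_\phi$ denote the corresponding cokernel; by construction this produces a flat family of $F$-prioritary sheaves over $S$ and hence a classifying morphism $\psi\colon S\to \cP_{X,F}({\bf v})$.

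The main technical step is to compute the Kodaira-Spencer map $T_\phi S = \Hom(U,V) \to \Ext^1(\cE_\phi,\cE_\phi)$ at an arbitrary $\phi\in S$. Applying $\Hom(U,-)$ and $\Hom(V,-)$ to the short exact sequence $0\to U\to V\to \cE_\phi\to 0$ and invoking the strong exceptional hypothesis, I get $\Ext^i(U,U)=\Ext^i(V,V)=\Ext^i(U,V)=0$ for $i>0$ (using $\Ext^{>0}(A_s,A_s)=0$ and $\Ext^{>0}(A_s,A_t)=0$ for $s<t$) and $\Ext^i(V,U)=0$ for all $i$ (using $\Ext^\bullet(A_t,A_s)=0$ for $t>s$). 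These vanishings imply that $\Hom(U,V)\twoheadrightarrow \Hom(U,\cE_\phi)$, that $\Ext^1(U,\cE_\phi)=0$, and that $\Ext^1(V,\cE_\phi)=0$. Applying $\Hom(-,\cE_\phi)$ to the same short exact sequence, the piece
\begin{equation*}
\Hom(U,\cE_\phi)\to \Ext^1(\cE_\phi,\cE_\phi)\to \Ext^1(V,\cE_\phi)=0
\end{equation*}
together with the identification of the boundary map as the Kodaira-Spencer map shows that $T_\phi S\to \Ext^1(\cE_\phi,\cE_\phi)$ is surjective.

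To finish, I note that by Walter's Theorem (Theorem \ref{thm-Walter}) the stack $\cP_{X,F}({\bf v})$ is smooth. Combined with surjectivity of the Kodaira-Spencer map at every point of $S$, this implies that $\psi\colon S\to \cP_{X,F}({\bf v})$ is smooth, hence its image is open. If $\cP_{X,F}({\bf v})$ is irreducible, this open image is dense, so a general $\cE\in \cP_{X,F}({\bf v})$ sits in a resolution of the form (\ref{eq-res}); Lemma \ref{lem-exponents} then gives $H^i(X,\cE)=0$ for all $i$. The only place where something could go wrong is the bookkeeping in the Kodaira-Spencer computation; the key observation that makes it painless is that the splitting $U=\bigoplus_{i\leq j}A_i^{\oplus a_i}$, $V=\bigoplus_{i>j}A_i^{\oplus a_i}$ respects the ordering of the exceptional collection, so all the off-diagonal Ext groups one needs to vanish do vanish automatically.
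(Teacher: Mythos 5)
Your proof is correct and follows essentially the same route as the paper: both arguments factor the Kodaira--Spencer map through $\Hom(U,\cE_\phi)$, use the strong exceptional hypothesis to get the vanishings $\Ext^1(U,U)=\Ext^1(V,V)=\Ext^2(V,U)=0$ and hence surjectivity of both factors, and then combine completeness with irreducibility of $\cP_{X,F}({\bf v})$ and Lemma \ref{lem-exponents} to conclude. The only cosmetic difference is your appeal to Walter's smoothness theorem, which the paper does not need at this point since surjectivity of the Kodaira--Spencer map already makes the image of the classifying morphism open.
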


\begin{proof}
To show that the family is complete, we need to show that the Kodaira-Spencer map is surjective. The Kodaira-Spencer map $$\kappa: T_{\phi} S= \Hom (U,V)  \rightarrow \Ext^1(\cE, \cE)$$  factors as the composition of the two maps \cite{HuybrechtsLehn}, \cite{LePotier}
$$\Hom (U,V) \stackrel{\mu}{\longrightarrow} \Hom(U, \cE ) \stackrel{\nu}{\longrightarrow} \Ext^1(\cE, \cE),$$ where $\mu$ and $\nu$ are the morphisms that appear in the natural exact sequences obtained by applying $\Ext (U, -)$ and $\Ext(-, \cE)$, respectively: $$\Hom(U,V) \stackrel{\mu}{\longrightarrow} \Hom(U, E) \longrightarrow \Ext^1(U,U), \quad \mbox{and} \quad \Hom(U, \cE) \stackrel{\nu}{\longrightarrow} \Ext^1(\cE, \cE) \longrightarrow \Ext^1(V, \cE).$$  Since $(A_1, \dots, A_m, \OO_X)$ is a strong exceptional collection, we have that $$\Ext^1(U,U)=0, \quad \Ext^1(V,V)=0 \quad \mbox{ and} \quad \Ext^2(V,U)=0$$ We conclude that $\mu$ is surjective.  Applying $\Ext(V, -)$ to  Sequence (\ref{eq-res}), we also conclude that 
$$\Ext^1(V, \cE)=0$$ and the map $\nu$ is surjective. Therefore,  the Kodaira-Spenser map is surjective. Since being $F$-prioritary is an open condition, we conclude that we have a complete family of $F$-prioritary sheaves. If $\cP_{X,F}({\bf v})$ is irreducible, then prioritary sheaves having a resolution of the form (\ref{eq-res}) give a Zariski-dense open subset of  $\cP_{X, F}({\bf v})$. By Lemma \ref{lem-exponents}, these sheaves have no cohomology. This concludes the proof of the lemma.
\end{proof}

In view of Proposition \ref{lem-complete}, it is useful to know when $\cE$ given by a resolution of the form (\ref{eq-res}) is locally free. We recall a standard Bertini-type theorem for the reader's convenience (see \cite[Proposition 2.6]{Huizenga}).

\begin{lemma}\label{lem-locallyfree}
Assume that $\sHom(A_s, A_t)$ is globally generated for $1 \leq s \leq j$ and $j < t \leq m$ and that $\rk(\cE) \geq 2$. Then a general sheaf $\cE$ given by a resolution of the form  (\ref{eq-res}) is locally free.
\end{lemma}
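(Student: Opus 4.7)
The plan is to run the standard Bertini-type argument for cokernels of maps of vector bundles, using global generation to compare the space of global morphisms with the space of pointwise linear maps.

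First, I would recall that since both $U = \bigoplus_{i=1}^j A_i^{\oplus a_i}$ and $V = \bigoplus_{i=j+1}^m A_i^{\oplus a_i}$ are locally free, the cokernel $\cE$ of $\phi \colon U \to V$ is locally free if and only if the induced map on fibers $\phi_x \colon U_x \to V_x$ is injective at every point $x \in X$. Equivalently, the degeneracy locus
\[ D(\phi) = \{ x \in X : \rk \phi_x < \rk U \} \]
must be empty. In the affine space $\Hom(U_x, V_x)$ of linear maps between two vector spaces of dimensions $\rk U$ and $\rk V$, the determinantal subvariety of maps of rank $< \rk U$ has codimension
\[ \rk V - \rk U + 1 = \rk \cE + 1 \geq 3. \]

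Next, I would translate global generation into a pointwise surjectivity statement. Since each $A_s$ and $A_t$ is locally free, $\sHom(A_s,A_t)_x = \Hom((A_s)_x,(A_t)_x)$, and the hypothesis that $\sHom(A_s,A_t)$ is globally generated means the evaluation map
\[ \Hom(A_s, A_t) = H^0(X, \sHom(A_s,A_t)) \longrightarrow \Hom((A_s)_x,(A_t)_x) \]
is surjective for every $x \in X$ and every pair $(s,t)$ with $1 \le s \le j < t \le m$. Summing over pairs and multiplicities, the evaluation map
\[ \ev_x \colon \Hom(U, V) \longrightarrow \Hom(U_x, V_x) \]
is surjective for every $x \in X$.

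Now I would form the incidence variety
\[ I = \{ (\phi, x) \in \Hom(U,V) \times X : \phi_x \text{ is not injective} \}, \]
and project to $X$. By surjectivity of $\ev_x$, the fiber of $I \to X$ over $x$ is the preimage under a surjective linear map of the rank-$<\rk U$ determinantal variety in $\Hom(U_x,V_x)$, hence a subvariety of $\Hom(U,V)$ of codimension exactly $\rk \cE + 1$. Therefore
\[ \dim I \leq \dim \Hom(U,V) - (\rk \cE + 1) + \dim X = \dim \Hom(U,V) + 1 - \rk \cE, \]
and because $\rk \cE \geq 2$ the projection $I \to \Hom(U,V)$ has image of codimension at least one. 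A general $\phi \in \Hom(U,V)$ therefore has $D(\phi) = \emptyset$, so $\phi$ is injective as a map of sheaves and $\cE = \coker \phi$ is locally free.

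The only nonroutine point is the codimension estimate for the determinantal locus; everything else is a standard dimension count, and the rank-$\geq 2$ hypothesis is exactly what is needed to convert codimension $\rk \cE + 1$ in $\Hom(U,V)$ into codimension $\rk \cE - 1 \geq 1$ after sweeping over the $2$-dimensional surface $X$.
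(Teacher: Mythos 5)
Your argument is correct and is exactly the standard Bertini-type dimension count that the paper invokes by citation (it gives no proof of its own, referring instead to \cite[Proposition 2.6]{Huizenga}): global generation makes the evaluation maps $\Hom(U,V)\to\Hom(U_x,V_x)$ surjective, the rank-dropping locus has codimension $\rk\cE+1\geq 3$, and sweeping over the $2$-dimensional surface leaves the degeneracy locus of codimension $\rk\cE-1\geq 1$ in $\Hom(U,V)$. The only (harmless) implicit assumption is that the $A_i$ are locally free, which holds in all of the paper's applications since they are line bundles.
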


\subsection{Weak Brill-Noether for Hirzebruch surfaces}\label{subsecWBNHirzebruch}
As an application  of our discussion, we determine when  weak Brill-Noether holds for Hirzebruch surfaces. Let ${\bf v}$ be a Chern character of rank $r({\bf v})\geq 2$ and Euler characteristic $\chi({\bf v}) = 0$ on a Hirzebruch surface $\F_e$. Let $H$ be an ample divisor on $\F_e$. Since $\F_e$ and $H$ will be fixed in this subsection, we denote $M_{\F_e, H}({\bf v})$ by $M({\bf v})$.  By Walter's Theorem \ref{thm-Walter}, if the moduli space $M ({\bf v})$ is nonempty, then it is irreducible and the general sheaf is locally free.  In particular, Serre duality gives a birational map between $M({\bf v})$ and $M({\bf v}^D)$, where ${\bf v}^D$ is the Serre dual Chern character.  Weak Brill-Noether for $M({\bf v})$ and $M({\bf v}^D)$ are equivalent problems.

We write $$\nu({\bf v}) = \frac{c_1({\bf v})}{r({\bf v})} = \frac{k}{r} E + \frac{l}{r}F$$ for the ``total slope'' of ${\bf v}$, so $\mu_H({\bf v}) = \nu({\bf v})\cdot H$.  Since $$K_{\F_e} = -2E - (2+e)F,$$ we may replace ${\bf v}$ by ${\bf v}^D$ if necessary to assume $$\frac{k}{r}\geq -1.$$  Furthermore, if $\frac{k}{r}=-1$ we may additionally assume $\frac{l}{r} \geq -1-\frac{e}{2}$.   The Bogomolov inequality gives a further restriction on $\nu({\bf v})$.

\begin{lemma}\label{lem-Bogomolov}
With ${\bf v}$ as above, if $M({\bf v})$ is nonempty, then $\frac{l}{r} \geq -1+\frac{ke}{2r}$.
\end{lemma}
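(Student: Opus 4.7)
The plan is to derive the inequality directly from Bogomolov's inequality $c_1({\bf v})^2 \geq 2r({\bf v}) \ch_2({\bf v})$, which holds for the Chern character of any semistable sheaf and hence applies whenever $M({\bf v})$ is nonempty. The constraint $\chi({\bf v})=0$ gives a linear relation expressing $\ch_2({\bf v})$ in terms of $r,k,l,e$ via Hirzebruch-Riemann-Roch, and substituting this into Bogomolov will produce the claimed inequality after a short calculation.

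First I would compute $\ch_2({\bf v})$. On $\F_e$ we have $K_{\F_e}=-2E-(e+2)F$ and $\chi(\OO_{\F_e})=1$, so Riemann-Roch reads $\chi({\bf v}) = \ch_2({\bf v}) - \tfrac{1}{2}c_1({\bf v})\cdot K_{\F_e} + r$. With $c_1({\bf v})=kE+lF$ and $E^2=-e$, $E\cdot F=1$, $F^2=0$, one gets $c_1({\bf v})\cdot K_{\F_e} = ke-2k-2l$. Setting $\chi({\bf v})=0$ then yields
$$\ch_2({\bf v}) = -r - k - l + \tfrac{ke}{2}.$$
Next I would expand $c_1({\bf v})^2 = -k^2 e + 2kl$ and plug both expressions into Bogomolov's inequality $c_1({\bf v})^2 - 2r\ch_2({\bf v}) \geq 0$. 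A direct calculation gives
$$-k^2 e + 2kl + 2r(r+k+l) - rke \geq 0,$$
which, after grouping, factors as
$$2l(k+r) \geq (k+r)(ek - 2r).$$

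Under the standing normalization $k/r \geq -1$ (with $r\geq 2>0$) the factor $k+r$ is nonnegative. If $k+r > 0$, dividing by $2r(k+r)$ gives exactly $l/r \geq -1 + ke/(2r)$, as desired. The only remaining case is $k+r=0$, i.e.\ $k/r=-1$, where both sides of the factored inequality vanish and Bogomolov gives no information. But in this case the additional normalization $l/r \geq -1-e/2$ was imposed, and since $-1-e/2 = -1 + ke/(2r)$ when $k/r=-1$, the desired bound holds tautologically.

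The main point to watch is this boundary $k=-r$: Bogomolov degenerates there and the proof would fail without the supplementary assumption on $l/r$ made immediately before the lemma. Otherwise the argument is a one-line consequence of Bogomolov plus the $\chi=0$ constraint.
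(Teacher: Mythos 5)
Your proof is correct and is essentially the paper's own argument: both derive the bound from the Bogomolov inequality combined with the constraint $\chi({\bf v})=0$, and your factored inequality $(k+r)(2l-ek+2r)\geq 0$ is, after dividing by $2r^2$, exactly the paper's identity $\Delta({\bf v})=P(\nu({\bf v}))=\left(\frac{k}{r}+1\right)\left(\frac{l}{r}+1-\frac{ek}{2r}\right)\geq 0$. The handling of the boundary case $k/r=-1$ via the standing normalization $l/r\geq -1-e/2$ also matches the paper.
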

\begin{proof}
There is nothing to prove in the case $\frac{k}{r} = -1$, since then we are already assuming $\frac{l}{r} \geq -1-\frac{e}{2}$.  Assume $\frac{k}{r}>-1$ and $\frac{l}{r}<-1+\frac{ke}{2r}$.  We show $M({\bf v})$ is empty.  Let $$P(\nu) = \chi(\OO_{\F_e}) + \frac{1}{2}(\nu^2-\nu\cdot K_{\F_e}),$$ so that the Riemann-Roch formula takes the form $$\chi({\bf v}) = r(P(\nu({\bf v}))-\Delta({\bf v})).$$  Then since $\chi({\bf v}) = 0$, we have  $$\Delta({\bf v}) = P(\nu({\bf v})) = \left(\frac{k}{r}+1\right)\left(\frac{l}{r}+1\right)-\frac{1}{2}e\left(\frac{k}{r}+1\right)\left(\frac{k}{r}\right)= \left(\frac{k}{r}+1\right)\left(\frac{l}{r}+1-\frac{ek}{2r}\right)<0.$$ Thus, by the Bogomolov inequality, $M({\bf v})$ is empty.
\end{proof}

Our assumptions on ${\bf v}$ now give some simple $H^2$-vanishing results for semistable sheaves.

\begin{lemma}\label{lem-h2}
With ${\bf v}$ as above, if $\cE$ is an $H$-semistable sheaf of character ${\bf v}$, then $$H^2(\F_e, \cE) = H^2(\F_e, \cE(-E))=0.$$
\end{lemma}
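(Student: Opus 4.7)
The plan is to dualize using Serre duality and then use slope semistability to rule out the resulting morphisms to line bundles. By Serre duality,
\[
H^2(\F_e, \cE) \cong \Hom(\cE, K_{\F_e})^*, \qquad H^2(\F_e, \cE(-E)) \cong \Hom(\cE, \OO_{\F_e}(-E) \otimes K_{\F_e})^*,
\]
so it suffices to show that there are no nonzero maps from $\cE$ to either $L_1 := K_{\F_e} = \OO_{\F_e}(-2E - (e+2)F)$ or $L_2 := \OO_{\F_e}(-E-(e+2)F)$.

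I will use the standard observation that if $\cE$ is $H$-semistable and $f\colon \cE \to L$ is a nonzero morphism with $L$ a line bundle, then the image $\cF$ is a torsion-free rank-one quotient of $\cE$ that embeds into $L$, so
\[
\mu_H(\cE) \leq \mu_H(\cF) \leq \mu_H(L).
\]
Applied to $L_i$ this reduces the problem to showing
\[
\nu({\bf v})\cdot H \;>\; c_1(L_i)\cdot H \qquad (i=1,2).
\]
Writing $\nu({\bf v}) = \tfrac{k}{r}E + \tfrac{l}{r}F$, these inequalities become
\[
\Bigl(\tfrac{k}{r}+2\Bigr)(E\cdot H) + \Bigl(\tfrac{l}{r}+e+2\Bigr)(F\cdot H) > 0 \quad \text{and} \quad \Bigl(\tfrac{k}{r}+1\Bigr)(E\cdot H) + \Bigl(\tfrac{l}{r}+e+2\Bigr)(F\cdot H) > 0.
\]

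The key step is to verify these via our running assumptions and the Bogomolov bound. Since $H$ is ample on $\F_e$, both $E\cdot H$ and $F\cdot H$ are strictly positive (on $\F_0$ this is immediate; on $\F_e$ for $e\geq 1$, an ample class $aE+bF$ requires $a>0$ and $b>ae$, giving $E\cdot H = b-ae > 0$ and $F\cdot H = a > 0$). The hypothesis $\tfrac{k}{r}\geq -1$ forces $\tfrac{k}{r}+2\geq 1$ and $\tfrac{k}{r}+1\geq 0$, so the only concern is the coefficient $\tfrac{l}{r}+e+2$. By Lemma~\ref{lem-Bogomolov} we have $\tfrac{l}{r}\geq -1+\tfrac{ke}{2r}\geq -1-\tfrac{e}{2}$ (the latter also holds by assumption in the boundary case $\tfrac{k}{r}=-1$), which yields $\tfrac{l}{r}+e+2 \geq 1+\tfrac{e}{2} > 0$.

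Putting the pieces together: for the first inequality, the $E\cdot H$ term is strictly positive and the $F\cdot H$ term is nonnegative (in fact positive), so the sum is positive. For the second inequality, if $\tfrac{k}{r}>-1$ then the $E\cdot H$ term is positive, and if $\tfrac{k}{r}=-1$ then the $E\cdot H$ term vanishes but the $F\cdot H$ term is still strictly positive; either way the sum is positive. This contradicts the existence of a nonzero map $\cE\to L_i$, so $\Hom(\cE,L_i)=0$ and both $H^2$'s vanish. There is no real obstacle here — it is a direct calculation once Serre duality reduces the vanishing to a slope inequality that is already encoded in the standing hypotheses plus Bogomolov.
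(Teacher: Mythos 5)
Your proof is correct and follows essentially the same route as the paper: Serre duality reduces each $H^2$ to a $\Hom$ into a line bundle, which vanishes by comparing $H$-slopes using the standing hypothesis $\tfrac{k}{r}\geq -1$ together with the Bogomolov bound of Lemma~\ref{lem-Bogomolov}. The only cosmetic differences are that you verify the positivity of $H\cdot(\nu({\bf v})-K_{\F_e})$ by expanding in the $E,F$ basis and using $E\cdot H,\,F\cdot H>0$ (the paper instead decomposes $H$ in the nef generators $F$ and $E+eF$), and you treat $\cE(-E)$ by a second direct computation rather than by the paper's slightly generalized vanishing statement.
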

\begin{proof}
Since $\frac{k}{r}\geq -1$, Lemma \ref{lem-Bogomolov} in particular implies $\frac{l}{r}\geq -1 -\frac{e}{2}$.  In fact, we prove the stronger result that if ${\bf v}\in K(\F_e)$ is any character such that $\frac{k}{r} \geq -2$, $\frac{l}{r}\geq -2-e$, and at least one of the inequalities is strict, then any $H$-semistable sheaf $\cE$ of character ${\bf v}$ has $H^2(\F_e, \cE)=0$.

We use Serre duality to write $$H^2(\F_e, \cE) = \Ext^2(\OO_{\F_e},\cE) = \Hom(\cE,K_{\F_e})^*.$$ Since $\cE$ and $K_{\F_e}$ are both $H$-semistable, the vanishing will follow if $\mu_H({\cE})> \mu_H(K_{\F_e})$, which is equivalent to $H\cdot(\nu({\bf v})-K_{\F_e})>0$. The nef cone of $\F_e$ is spanned by $F$ and $E+eF$.  We compute \begin{align*}F\cdot(\nu({\bf v})-K_{\F_e})&=\frac{k}{r}+2\\
(E+eF)\cdot (\nu({\bf v})-K_{\F_e})&= \frac{l}{r}+2+e.\end{align*}
By our assumption on ${\bf v}$, both of these intersection numbers are nonnegative, and at least one of them is positive.  Since $H$ is ample, it is a positive combination of the extremal nef classes, and it follows that $H\cdot (\nu({\bf v})-K_{\F_e})>0$.  
\end{proof}

Next we describe characters ${\bf v}$ such that weak Brill-Noether fails for $M({\bf v})$ if the moduli space is nonempty.  It will turn out that these are the only characters where weak Brill-Noether fails.

\begin{proposition}
Let ${\bf v}$ be a character as above, and suppose $\cE$ is an $H$-semistable sheaf of character ${\bf v}$.  If $\chi(\cE(-E))>0$, then $H^0(\F_e, \cE)\neq 0$.  More explicitly, if $$\frac{l-ke}{r} = \nu({\bf v})\cdot E< -1,$$ then $H^0(\F_e, \cE)\neq 0$.
\end{proposition}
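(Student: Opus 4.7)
The plan is to use the two displayed inequalities we already have on $\nu({\bf v})$ plus Lemma \ref{lem-h2} to reduce the statement to a rank-one positivity argument: produce a section of $\cE(-E)$, and then push it into $\cE$ by multiplication by the defining section of $\OO_{\F_e}(E)$.

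First I would verify that the two hypotheses ``$\chi(\cE(-E))>0$'' and ``$\nu({\bf v})\cdot E<-1$'' coincide. Since $\chi({\bf v})=0$, Lemma \ref{lem-eulerchar} applied with $M=\OO_{\F_e}(-E)$ gives
\[
\chi(\cE(-E)) \;=\; \ch_1({\bf v})\cdot(-E) \;+\; r\bigl(\chi(\OO_{\F_e}(-E))-\chi(\OO_{\F_e})\bigr).
\]
Using $E^2=-e$, $E\cdot F=1$, Theorem \ref{thmCohomologyH}(1) (so $\chi(\OO_{\F_e}(-E))=0$), and $\chi(\OO_{\F_e})=1$, this collapses to $\chi(\cE(-E))=ke-l-r$, which is positive precisely when $\frac{l-ke}{r}=\nu({\bf v})\cdot E<-1$.

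Next I would invoke Lemma \ref{lem-h2}, which says that under our standing assumptions on ${\bf v}$ any $H$-semistable $\cE$ of character ${\bf v}$ satisfies $H^2(\F_e,\cE(-E))=0$. Combined with $\chi(\cE(-E))>0$, this forces $H^0(\F_e,\cE(-E))\neq 0$.

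Finally, let $s_E\in H^0(\F_e,\OO_{\F_e}(E))$ be the section cutting out $E$. Tensoring the injection $\OO_{\F_e}\xrightarrow{s_E}\OO_{\F_e}(E)$ by $\cE(-E)$ produces a map $\cE(-E)\to \cE$ whose kernel is a torsion subsheaf; since $\cE$ is $H$-semistable on a smooth surface, it is torsion-free, so this map is injective. Composing a nonzero global section of $\cE(-E)$ with this injection yields a nonzero element of $H^0(\F_e,\cE)$, as required. There is no serious obstacle: the argument is a direct assembly of Lemmas \ref{lem-eulerchar} and \ref{lem-h2} together with the effectivity of $E$, and the only thing to be careful about is the sign bookkeeping in the Riemann--Roch step that converts $\chi(\cE(-E))>0$ into the explicit slope inequality.
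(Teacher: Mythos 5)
Your proposal is correct and follows essentially the same route as the paper: apply Lemma \ref{lem-h2} to get $H^2(\F_e,\cE(-E))=0$, deduce $H^0(\F_e,\cE(-E))\neq 0$ from $\chi(\cE(-E))>0$, and push the section into $\cE$ via the inclusion $\cE(-E)\hookrightarrow\cE$ (the paper phrases this through the restriction sequence $0\to\cE(-E)\to\cE\to\cE|_E\to 0$). Your explicit Riemann--Roch verification that $\chi(\cE(-E))=ke-l-r$ is a correct spelling-out of the step the paper dismisses as immediate.
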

\begin{proof}
The restriction sequence $$0\to \cE(-E)\to \cE\to \cE|_E\to 0$$ shows that if $H^0(\F_e, \cE(-E))\neq 0$, then $H^0(\F_e, \cE) \neq 0$.  By Lemma \ref{lem-h2}, we have $H^2(\F_e, \cE(-E))=0$. Therefore, $H^0(\F_e, \cE(-E))\neq 0$ since $\chi(\cE(-E))>0$. 
The second statement follows immediately from Riemann-Roch.
\end{proof}

Conversely, we have the main result of this section.

\begin{theorem}\label{thmMainH}
Let ${\bf v}$ be a character as above, and suppose $\cE$ is a general $H$-semistable sheaf of character ${\bf v}$.  If $\chi(\cE(-E))\leq 0$, then $\cE$ has no cohomology.  

Moreover, unless $\cE$ is a direct sum of copies of $\OO_{\P^1\times \P^1}(-1,-1)$, $\cE$ admits a resolution of the form $$0\to \OO_{\F_e}(-E-(e+1)F)^{a}\to \OO_{\F_e}(-E-eF)^{b}\oplus \OO_{\F_e}(-F)^{c}\to \cE\to 0$$ for some nonnegative numbers $a,b,c$.
\end{theorem}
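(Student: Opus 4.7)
The approach is to apply the machinery of \S\ref{secResolutions} to the strong exceptional collection $(A_1,A_2,A_3,\OO_{\F_e})$ of Example \ref{exSEHirzebruch}, where $A_1=\OO_{\F_e}(-E-(e+1)F)$, $A_2=\OO_{\F_e}(-E-eF)$, and $A_3=\OO_{\F_e}(-F)$, and to realize a general $H$-semistable $\cE$ via a resolution
$$0 \longrightarrow A_1^a \longrightarrow A_2^b \oplus A_3^c \longrightarrow \cE \longrightarrow 0.$$

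First I would combine Lemma \ref{lem-exponents} with Lemma \ref{lem-eulerchar} (using $\chi({\bf v})=0$ and $\hom(A_2,A_3)=h^0(\OO_{\F_e}(E+(e-1)F))=e$ from Theorem \ref{thmCohomologyH}) to express the forced exponents numerically:
$$c=r+k,\qquad b=r+l-ke,\qquad a=r+k+l-ke=b+c-r.$$
A direct Riemann-Roch computation gives $\chi(\cE(-E))=-b$, so the hypothesis $\chi(\cE(-E))\leq 0$ is exactly $b\geq 0$, while the standing normalization $k/r\geq -1$ gives $c\geq 0$ automatically.

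The main obstacle is to show $a\geq 0$ outside the exceptional case. I would argue by contradiction: if $a<0$, then $b+c<r$. The Bogomolov inequality, rewritten via $\chi({\bf v})=0$ as $r^2P(\nu({\bf v}))=c(2b+ek)/2\geq 0$, forces $2b+ek\geq 0$ whenever $c>0$. Combining with $b+c<r$ yields $k(2-e)<0$. A case analysis on $e$ then shows that in the $c>0$ regime the resulting constraints either directly contradict Bogomolov (so $M({\bf v})=\emptyset$ and the theorem is vacuous) or force $\Delta({\bf v})=0$; since $\F_e$ admits no $\mu_H$-stable sheaf of rank $\geq 2$ with $\Delta=0$ of such slope (by the classification of rigid sheaves), one reduces to the boundary case $c=0$, i.e., $k=-r$. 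In that case the normalization $l/r\geq -1-e/2$ combined with $a<0$ (equivalently $l/r<-e$) is compatible only when $e=0$ and $l=-r$, forcing $\nu=-E-F$ on $\F_0$. The only $H$-semistable sheaf of this character is the polystable $\OO_{\P^1\times\P^1}(-1,-1)^r$, which has no cohomology by Theorem \ref{thmCohomologyH}(1); this disposes of the exceptional case directly, even though no resolution exists there.

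Assuming $a,b,c\geq 0$, the construction proceeds within the framework of \S\ref{secResolutions}. Both $\sHom(A_1,A_2)=\OO_{\F_e}(F)$ and $\sHom(A_1,A_3)=\OO_{\F_e}(E+eF)$ are globally generated (as pullback from $\PP^1$ and as the tautological line bundle of the scroll embedding), so Lemma \ref{lem-locallyfree} shows that a general $\cE$ fitting into the resolution is locally free. The Ext vanishings required by Lemma \ref{lem-Prioritary} reduce to checking $H^1(\OO_{\F_e})=H^1(\OO_{\F_e}(E+(e-1)F))=0$ and $H^2=0$ for $\OO_{\F_e}(-F)$, $\OO_{\F_e}(E+(e-2)F)$, and $\OO_{\F_e}(-E-eF)$, all immediate from Theorem \ref{thmCohomologyH}; hence $\cE$ is $F$-prioritary. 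Proposition \ref{lem-complete} then provides a complete family of such $F$-prioritary sheaves, and Walter's Theorem \ref{thm-Walter} gives irreducibility of $\cP_{\F_e,F}({\bf v})$, so sheaves of this form are Zariski-dense there. Since $-(K_{\F_e}+F)=2E+(e+1)F$ is effective, every ample $H$ satisfies $H\cdot(K_{\F_e}+F)<0$, so Gieseker semistable sheaves are automatically $F$-prioritary; hence a general $H$-semistable sheaf admits the claimed resolution and has no cohomology by Lemma \ref{lem-exponents}.
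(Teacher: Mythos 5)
Your overall architecture matches the paper's proof exactly: the same strong exceptional collection, the same exponent computations via Lemmas \ref{lem-exponents} and \ref{lem-eulerchar}, and the same chain Lemma \ref{lem-locallyfree} $\to$ Lemma \ref{lem-Prioritary} $\to$ Proposition \ref{lem-complete} $\to$ Walter. Steps 2--4 and your handling of the exceptional character $\nu=-E-F$ on $\F_0$ are fine. The problem is your argument for $a\geq 0$ in the residual cases $e\in\{0,1\}$, $k<0$.

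Your claimed dichotomy --- that when $c>0$ and $a<0$ the numerical constraints ``either directly contradict Bogomolov or force $\Delta({\bf v})=0$'' --- is false. Using $a=b+k$, $c=k+r$ and (for $e=0$) $\Delta=bc/r^2$, take $e=0$, $r=3$, $k=l=-2$: then $b=c=1\geq 0$, $a=-1<0$, and $\Delta=1/9>0$, so Bogomolov is satisfied and $\Delta\neq 0$. (A similar example exists for $e=1$, e.g.\ $r=4$, $k=-3$, $l=-5$.) So purely numerical considerations cannot rule out $a<0$ here, and your appeal to the classification of rigid ($\Delta=0$) sheaves never gets off the ground. The paper closes this case by using semistability itself rather than only its numerical consequence: one checks by comparing $H$-slopes (with $H=E+\alpha F$, $\alpha>e$, using $k/r\geq -1$ and the Bogomolov bound $l/r-ke/(2r)\geq -1$) that $\mu_H(\cE)>\mu_H(\OO(-E-(e+1)F))$ outside the excluded case, whence $\Ext^2(\OO(E+F),\cE)\cong\Hom(\cE,\OO(-E-(e+1)F))^*=0$; since $a<0$ means $\chi(\OO(E+F),\cE)>0$, this forces $\Hom(\OO(E+F),\cE)\neq 0$, which contradicts semistability because $\mu_H(\OO(E+F))>0>\mu_H(\cE)$. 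In other words, when $a<0$ the moduli space is empty and the statement is vacuous, but establishing that emptiness genuinely requires this Hom-theoretic stability argument; you need to replace your Bogomolov case analysis with it (or an equivalent).
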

\begin{proof}
We may assume $\cE$ is not a direct sum of copies of $\OO_{\P^1\times \P^1}(-1,-1)$, since there is nothing to prove in that case.  By Rudakov's classification, $\cE$ is a direct sum of copies of $\OO_{\P^1\times \P^1}$ if and only if $e=0$ and $k = l = -r$, so we will assume these equalities do not all hold.

By Example \ref{exSEHirzebruch}, the collection 
$$A_1= \OO_{\F_e}(-E-(e+1)F), \quad A_2= \OO_{\F_e}(-E-eF), \quad A_3= \OO_{\F_e}(-F), \quad \OO_{\F_e}$$ is a strong exceptional collection. Let $\cE$ be the cokernel of a general map
$$0\to \OO_{\F_e}(-E-(e+1)F)^{a} \stackrel{\phi}{\longrightarrow} \OO_{\F_e}(-E-eF)^{b}\oplus \OO_X(-F)^{c}\to \cE\to 0.$$

\noindent \emph{Step 1: Nonnegativity of the exponents.} First, we determine the exponents $a,b,c$ that give the correct Chern class for $\cE$ and we verify that they are nonnegative.  

By Lemma \ref{lem-exponents}, $c = \chi(\cE(F)).$ By Lemma \ref{lem-eulerchar},  we get $$c=\chi(\cE(F))=c_1(\cE)\cdot F +r=k+r\geq 0$$ since $\frac{k}{r}\geq -1$. Hence,  $c\geq 0$. 

Next, applying $\chi(\OO_X(E),-)$, we get $$b = -\chi(\OO_X(E),\cE)=-\chi(\cE(-E)).$$ Our assumption $\chi(\cE(-E))\leq 0$ yields $b\geq 0$.

The nonnegativity of the exponent $a$ is the most challenging.  By Lemmas \ref{lem-eulerchar} and \ref{lem-exponents}, $$a=-\chi(\cE(-E-F))= l -ke +k+r.$$ By assumption, $$ l-ke+r \geq 0 \quad \mbox{and} \quad l-\frac{ke}{2} + r \geq 0.$$ Hence, if either $k \geq 0,$ or $k < 0$ and $e \geq 2$, then $a\geq 0$. 

It remains to consider the cases when $k<0$ and $e=0$ or $1$.  Suppose $a<0$ to get a contradiction.  Since we are assuming $k\geq -r$, we must have $l <0$.  We claim that $$\Ext^2(\OO(E+F), \cE)= \Hom(\cE, \OO(-E-(e+1)F))^*=0$$ by stability. To see this compare the $H=E +\alpha F$ slopes (with $\alpha >e$)
$$\mu_H(\cE) = \frac{k}{r} \alpha + \frac{l}{r} - \frac{ke}{r} \geq   \frac{k}{r} \alpha + \frac{l}{r} - \frac{ke}{2r} \geq -\alpha -1 = \mu_H (\OO(-E - (e+1)F)).$$ The first inequality is strict unless $e=0$.  If $e=0$, then the second inequality is strict unless $k=l=-r$.  Since we are assuming $\cE$ is not a direct sum of copies of $\OO_{\P^1\times \P^1}(-1,-1)$, we conclude that the inequality is always strict, and the vanishing $\Ext^2(\OO(E+F),\cE)=0$ holds.  Then since $a$ is negative, we deduce $\Hom(\OO(E+F),\cE)\neq 0$.  This contradicts the $H$-semistability of $\cE$, since $\mu_H(\OO(E+F)) > 0 > \mu_H(\cE)$.  Therefore $a \geq 0$ in every case.

\noindent \emph{Step 2: A general sheaf with the specified resolution is locally free of class ${\bf v}$.} Since $$\sHom(\OO_{\F_e}(-E-(e+1)F), \OO_{\F_e}(-E-eF)) \cong \OO_{\F_e}(F) \quad \mbox{and}$$   $$\sHom(\OO_{\F_e}(-E-(e+1)F), \OO_{\F_e}(-F)) \cong \OO_{\F_e}(E+eF)$$ are globally generated, by Lemma \ref{lem-locallyfree}, the cokernel of a general map $\phi$ is locally free.  

\noindent \emph{Step 3: Any locally free sheaf $\cE$ with the specified resolution is prioritary.} By Theorem \ref{thmCohomologyH}, $$\Ext^1(A_1, A_2 \otimes\OO_{F_e}(-F)) = H^1(\F_e, \OO_{\F_e})=0,$$ $$\Ext^1(A_1, A_3  \otimes\OO_{F_e}(-F)) = H^1(\F_e, \OO_{\F_e}(E+(e-1)F))=0$$ and $$\Ext^2(A_2, A_2 \otimes\OO_{F_e}(-F)) = \Ext^2(A_3, A_3\otimes\OO_{F_e}(-F)) = H^2(\F_e, \OO_{\F_e}(-F))=0,$$ $$\Ext^2(A_2, A_3 \otimes\OO_{F_e}(-F)) = H^2(\F_e, \OO_{\F_e}(E + (e-2)F))=0.$$ By Lemma \ref{lem-Prioritary}, we conclude that $\cE$ is $F$-prioritary.

\noindent \emph{Step 4: Conclusion of the proof.} By Proposition \ref{lem-complete}, the open subset $S \subset \Hom(A_1^a,A_2^b\oplus A_3^c)$ parameterizing locally free quotients parameterizes a complete family of prioritary sheaves.  Any sheaf parameterized by $S$ has no cohomology by Lemma \ref{lem-exponents}. Since the stack $\cP_{\F_e, F}({\bf v})$ of prioritary sheaves of character ${\bf v}$ is irreducible and $\cM ({\bf v})$ is a dense open substack of $\cP_{\F_e, F}({\bf v})$, we conclude that weak Brill-Noether holds for $M({\bf v})$. Furthermore, the general sheaf in $M({\bf v})$ has the required resolution.  In particular, note that  the moduli space is unirational since it is dominated by an open set in $\Hom(A_1^a,A_2^b\oplus A_3^c)$.
\end{proof}

\subsection{Applications to blowups of $\PP^2$}
In this subsection, let $\Gamma$ be a set of $k$ distinct points $p_1, \dots, p_k$ on $\PP^2$ and let $X$ denote the blowup of $\PP^2$ along $\Gamma$. Let $L$ denote the pullback of the hyperplane class and let $E_i$ denote the exceptional divisor lying over $p_i$.  Our methods have the following  consequence. 

\begin{theorem}\label{thm-veryweak}
Let ${\bf v}$ be a  Chern character on $X$ such that $$r({\bf v})\geq 2, \quad \chi({\bf v})=0, \quad   \nu({\bf v}) = \delta L - \sum_{i=1}^k \alpha_i E_i, \  \mbox{where} \ \delta, \alpha_i \geq 0\ \mbox{and} \   \delta - \sum_{i=1}^k \alpha_i \geq -1.$$ Then the stack $\cP_{X, L-E_1}({\bf v})$ is nonempty, and a general sheaf parameterized by $\cP_{X,L-E_1}({\bf v})$ has no cohomology.
  
If $\delta - 2\sum_{i=1}^k \alpha_i +2 \geq 0$, then the general sheaf $\cE$ in $\cP_{X, L-E_1}({\bf v})$  has a resolution of the form
\begin{equation}\label{eqfirst}
0 \longrightarrow  \OO_X(-2L)^a \stackrel{\phi}{\longrightarrow } \OO_X(-L)^b \oplus \bigoplus_{i=1}^k \OO(-E_i)^{c_i} \longrightarrow \cE \longrightarrow 0, 
\end{equation}
Otherwise, $\cE$ has a resolution of the form
\begin{equation}\label{eqsecond}
0 \longrightarrow  \OO_X(-2L)^a \oplus \OO_X(-L)^b \stackrel{\phi}{\longrightarrow}   \bigoplus_{i=1}^k \OO(-E_i)^{c_i} \longrightarrow \cE \longrightarrow 0.
\end{equation}
The exponents are given by  $$a = r({\bf v}) (\delta - \sum_{i=1}^k \alpha_i +1), \quad c_i = r({\bf v}) \alpha_i, \quad b= \left|r({\bf v})+a - \sum_{i=1}^k c_i\right|.$$ 
\end{theorem}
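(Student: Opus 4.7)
The plan is to mirror the proof of Theorem \ref{thmMainH}, replacing the strong exceptional collection used there with the one for blowups of $\PP^2$ from Example \ref{exBlowupP2}. The last bundle $\OO_X$ in that collection is absent from the resolution, and the two candidate resolutions (\ref{eqfirst}) and (\ref{eqsecond}) correspond to placing $\OO_X(-L)$ on the right or on the left of the resolution map $\phi$. The sign of $r+a-\sum c_i$ dictates which choice makes all exponents nonnegative.

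First I would compute the exponents via Lemma \ref{lem-exponents}, using Lemma \ref{lem-eulerchar} together with Serre duality to convert $\chi(\cE, M)$ into $\chi(\cE\otimes(M^\vee\otimes K_X))$. For the leftmost term one finds $\chi(\cE,\OO(-2L))=\chi(\cE\otimes \OO(-L+\sum E_i))=r(-\delta+\sum\alpha_i-1)$ since $\chi(\OO(-L+\sum E_i))=0$, giving $a=r(\delta-\sum\alpha_i+1)\geq 0$ by hypothesis. For each exceptional bundle, $\hom(\OO(-E_j),\OO(-E_i))=0$ when $i\neq j$, so $c_j=\chi(\cE\otimes \OO(E_j))=r\alpha_j\geq 0$ directly. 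For the middle exponent $b$, using $\hom(\OO(-L),\OO(-E_j))=2$ in the first case and $\hom(\OO(-2L),\OO(-L))=3$ in the second, one obtains $b=\pm r(\delta-2\sum\alpha_i+2)$, matching $|r+a-\sum c_i|$ and selecting the resolution to use.

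Next I would verify that a general sheaf with such a resolution is locally free and $(L-E_1)$-prioritary. Local freeness follows from Lemma \ref{lem-locallyfree} because the sheaves $\sHom$ that arise, namely $\OO(L)$, $\OO(2L-E_i)$ and $\OO(L-E_i)$, are globally generated on $X$. For the prioritary property, Lemma \ref{lem-Prioritary} reduces the problem to verifying two families of cohomology vanishings, $\Ext^1(A_i,A_s(-L+E_1))=0$ and $\Ext^2(A_i,A_s(-L+E_1))=0$ for the relevant index pairs. Each of these amounts to the vanishing of $H^1$ or $H^2$ of a specific line bundle; the $H^1$ vanishings follow by repeated application of Lemma \ref{lem-BasicBP^2}, starting from the base cases listed there and adding classes that intersect the current divisor appropriately, while the $H^2$ vanishings follow from Serre duality upon checking that the resulting anticanonical twists are not effective (the coefficient of $L$ is always negative).

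Finally I would invoke Walter's Theorem \ref{thm-Walter}: the class $L-E_1$ is the fiber class of the birational ruling of $X$ induced by the pencil of lines through $p_1$, so $\cP_{X,L-E_1}({\bf v})$ is smooth and irreducible. Combining this with Proposition \ref{lem-complete}, the open subset of $\Hom(U,V)$ parameterizing locally free prioritary sheaves provides a complete family dominating $\cP_{X,L-E_1}({\bf v})$, its cokernels have no cohomology by Lemma \ref{lem-exponents}, and therefore so does the general sheaf in the stack. The main obstacle I anticipate is the prioritary verification in the third paragraph: the Ext computations break into a handful of subcases depending on whether a given exceptional index coincides with the distinguished index $1$, and one must be organized enough to cover all of them without redundancy. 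The remaining steps are bookkeeping exercises in Riemann--Roch and direct appeals to the machinery of Section \ref{secResolutions}.
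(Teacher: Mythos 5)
Your proposal follows essentially the same route as the paper's proof: the strong exceptional collection $\OO(-2L),\OO(-L),\OO(-E_1),\dots,\OO(-E_k),\OO$ from Example \ref{exBlowupP2}, exponent computations via Lemmas \ref{lem-exponents} and \ref{lem-eulerchar} (your values for $a$, $b$, $c_i$ check out), local freeness via Lemma \ref{lem-locallyfree}, prioritariness via Lemmas \ref{lem-BasicBP^2} and \ref{lem-Prioritary}, and the conclusion via Walter's irreducibility and Proposition \ref{lem-complete}. This matches the paper's argument in both structure and detail.
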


\begin{proof}
The linear system $|L-E_1|$ defines a map from $X$ to $\PP^1$ giving $X$ the structure of a birationally ruled surface.  By \cite[Proposition 2]{Walter}, the stack of prioritary sheaves $\cP_{X, L-E_1}({\bf v})$ is smooth and irreducible. 
By Example \ref{exBlowupP2},  $$A_1 = \OO(-2L), \quad A_2= \OO(-L), \quad A_3= \OO(-E_1), \dots, A_{k+2}=\OO(-E_k), \quad \OO$$  is a strong exceptional collection on $X$. Lemma \ref{lem-exponents} computes the exponents of a sheaf with Chern character ${\bf v}$ with the given resolutions (\ref{eqfirst}) or (\ref{eqsecond}). Our assumptions on ${\bf v}$ imply that these exponents are positive; the requirement on the sign of $\delta - 2\sum_{i=1}^k \alpha_i+2$ which is used to determine the form of the resolution ensures that the exponent $b$ is positive.  Lemma \ref{lem-exponents} also implies that sheaves with these resolutions have no cohomology. 

Since $$\sHom(\OO_X(-2L), \OO_X(-L)) \cong \OO_X(L), \quad \sHom(\OO_X(-2L), \OO_X(-E_l)) \cong \OO_X(2L-E_i)$$ and $$ \sHom(\OO_X(-L), \OO_X(-E_i)) = \OO_X(L-E_i)$$ are globally generated and the rank of ${\bf v}$ is at least 2, by Lemma \ref{lem-locallyfree}, the cokernel $\cE$ of a general map $\phi$ is locally free in both cases. 

By Lemma \ref{lem-BasicBP^2}, $H^1(X, \OO_X(D))=0$ for $D$  a divisor of the form
$$E_1, \quad L, \quad L +E_1-E_i, \quad \emptyset, \quad E_1 - E_i.$$ Hence, $$\Ext^1(A_1, A_l \otimes \OO_X(-L+E_1))=0 \ \mbox{for} \ 2 \leq l \leq k+2$$ and $$\Ext^1(A_2, A_l\otimes \OO_X(-L+E_1))=0 \mbox{ for} \  3 \leq l \leq k+2.$$ Similarly, $H^2(X, \OO_X(D))=0$ when $D$ is an integral of the form $d  L - \sum a_i E_i$ with $d \geq -2$. Hence, $$\Ext^2(A_i, A_l \otimes \OO_X(-L+E_i))=0 \ \mbox{for}  \ 2 \leq i, l \leq k+2.$$ By Lemma \ref{lem-Prioritary}, all  locally-free sheaves with resolutions given by (\ref{eqfirst}) or (\ref{eqsecond}) are prioritary with respect to $\OO(L-E_1)$.  We conclude that the stack of prioritary sheaves $\cP_{X, L-E_1}({\bf v})$ is nonempty. Finally, by Lemma \ref{lem-complete}, this is a complete family of prioritary sheaves. Since the stack $\cP_{X, L-E_1}({\bf v})$ is irreducible, the general prioritary sheaf with Chern character ${\bf v}$ has no cohomology and has a resolution of the form (\ref{eqfirst}) or (\ref{eqsecond}) depending on the sign of the exponent of $\OO_X(-L)$.
\end{proof}

\begin{corollary}\label{cor-veryweak}
Let $H$ be an ample divisor on $X$ such that $H \cdot (-2L + \sum_{i=2}^k E_i) <0$. Let ${\bf v}$ be a stable Chern character on $X$ satisfying the hypotheses of Theorem \ref{thm-veryweak}. Then the moduli space $M_{X, H}({\bf v})$ is unirational and satisfies  weak Brill-Noether.
\end{corollary}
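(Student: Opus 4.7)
The plan is to reduce the corollary to Theorem \ref{thm-veryweak} via Walter's theorem. First I would rewrite the hypothesis on $H$ in a more useful form. Since $K_X = -3L + \sum_{i=1}^k E_i$, we have
\[
-2L + \sum_{i=2}^k E_i \;=\; K_X + (L - E_1),
\]
so setting $F = L - E_1$ the assumption becomes $H \cdot (K_X + F) < 0$. This is exactly the Walter condition discussed after Theorem \ref{thm-Walter}, which guarantees that every $H$-Gieseker semistable sheaf with Chern character ${\bf v}$ is automatically $F$-prioritary. Consequently, $M_{X,H}({\bf v})$ is an open substack of $\cP_{X, L-E_1}({\bf v})$.

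Next I would invoke Theorem \ref{thm-Walter}: since $X$ with the ruling $|L-E_1|$ is birationally ruled and $r({\bf v})\geq 2$, the stack $\cP_{X, L-E_1}({\bf v})$ is smooth and irreducible. Because ${\bf v}$ is stable by hypothesis, $M_{X,H}({\bf v})$ is a nonempty open substack of this irreducible stack, hence Zariski dense in $\cP_{X, L-E_1}({\bf v})$. Theorem \ref{thm-veryweak} then says that a general sheaf in $\cP_{X, L-E_1}({\bf v})$ has vanishing cohomology, so by density the general sheaf in $M_{X,H}({\bf v})$ also has no cohomology, which is weak Brill-Noether.

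For unirationality, I would revisit the proof of Theorem \ref{thm-veryweak}. There, the open subscheme $S \subset \Hom(U,V)$ (with $U$ and $V$ the explicit direct sums of line bundles from the strong exceptional collection of Example \ref{exBlowupP2}) parameterizes a complete family of $F$-prioritary sheaves of character ${\bf v}$; by Proposition \ref{lem-complete} the classifying morphism $S \to \cP_{X, L-E_1}({\bf v})$ is smooth and dominant. Pulling back the dense open substack $M_{X,H}({\bf v}) \subset \cP_{X, L-E_1}({\bf v})$ yields a nonempty open subset of the affine space $\Hom(U,V)$ dominating $M_{X,H}({\bf v})$, giving the unirational parameterization.

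The only point requiring care is the density of $M_{X,H}({\bf v})$ in $\cP_{X, L-E_1}({\bf v})$, but once the Walter inequality is unpacked this is immediate from irreducibility of the prioritary stack together with the nonemptiness built into the stability assumption. No genuinely new obstacle arises; the corollary is essentially a packaging of Theorem \ref{thm-veryweak} with Walter's theorem under the optimal ampleness hypothesis that makes semistable sheaves prioritary.
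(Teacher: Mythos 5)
Your proposal is correct and follows exactly the paper's route: unpack the ampleness hypothesis as $H\cdot(K_X+L-E_1)<0$ so that Walter's result makes $H$-semistable sheaves $(L-E_1)$-prioritary, realize $\cM_{X,H}({\bf v})$ as a dense open substack of the irreducible stack $\cP_{X,L-E_1}({\bf v})$, and quote Theorem \ref{thm-veryweak}. The paper's own proof is just a terser version of the same argument (it leaves the unirationality via the dominating open subset of $\Hom(U,V)$ implicit, as you spell out).
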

\begin{proof}
By Walter \cite{Walter}, an $H$-semistable sheaf is prioritary with respect to $L-E_1$. Consequently, $\cM_{X, H}({\bf v})$ is an open substack of $\cP_{X, L-E_1}({\bf v})$. The corollary follows from Theorem \ref{thm-veryweak}.
\end{proof}

\begin{remark}
Let $X$ be the blowup of $\PP^2$ along $k$ collinear points. Let $\cE$ be a coherent sheaf with Chern character ${\bf v}$ with  $$\mu({\bf v}) =\delta L - \sum_{i=1}^k \alpha_i E_i, \quad \delta - \sum_{i=1}^k \alpha_i < -1.$$ Assume $\cE$ is semistable with respect to an ample divisor $H$ such that $$\mu(\cE) \cdot H >  -2L \cdot H.$$ Then we claim that the cohomology of $\cE$ does not vanish and $M_{X,H}({\bf v})$ does not satisfy weak Brill-Noether. By stability,
$$\Ext^2(\OO(L-\sum_{i=1}^k E_i), \cE))= \Hom (\cE, \OO(-2L))^*=0.$$ By Lemma \ref{lem-eulerchar}, $$\chi( \OO(L-\sum_{i=1}^k E_i), \cE) = \rk(\cE)(-\delta + \sum_{i=1}^k \alpha_i-1) >0.$$ We conclude that $$\Hom(\OO(L-\sum_{i=1}^k E_i), \cE) \not=0.$$ However, since the $k$ points  are collinear, composing the natural map $\Hom(\OO, \OO(L - \sum_{i=1}^k E_i))$ with a nonzero morphism to $\cE$, we see that $H^0(X, \cE)\not= 0$. 
Hence, without further assumptions on the positions of the points, Theorem \ref{thm-veryweak} is sharp. However, if we assume that the points are general, we can use different strong exceptional collections to extend the range where  weak Brill-Noether holds.  
\end{remark}

\subsection{Applications to blowups of Hirzebruch surfaces}
A very similar theorem holds for blowups of Hirzebruch surfaces. Let $X$ be the blowup of a Hirzebruch surface $\F_e$, $e\geq 1$, along $k$ distinct points not lying on the exceptional curve $E$. Denote by $E$ and $F$ the pullbacks of the corresponding classes on $\F_e$ and let $E_1, \dots, E_k$ denote the exceptional divisors lying over the points $p_1, \dots, p_k$.  Let $\cE$ be a locally sheaf of rank at least $2$ with total slope
$$\nu(\cE) := \frac{\ch_1(\cE)}{\rk(\cE)}  = \alpha E + \beta F - \sum_{i=1}^k \alpha_i E_i.$$

\begin{theorem}\label{thm-veryweakH}
Let ${\bf v}$ be a Chern character on $X$ such that  $\nu({\bf v})$ satisfies $$\alpha_i \geq 0, \quad \alpha - \sum_{i=1}^k \alpha_i \geq -1, \quad \mbox{and} \quad  \beta - \sum_{i=1}^k \alpha_i   +1 \geq \max((e-1) \alpha, e\alpha).$$ Then the stack $\cP_{X, F}({\bf v})$ is nonempty and the general sheaf in $\cP_{X, F}({\bf v})$ has no cohomology. Furthermore, the general sheaf in $\cP_{X, F}({\bf v})$ admits a resolution of the form 
$$
0\to \OO_X(-E-(e+1)F)^{a}\stackrel{\phi}{\to} \OO_X(-E-eF)^{b}\oplus \OO_X(-F)^{c}\oplus \bigoplus_{i=1}^k \OO_X(-E_i)^{d_i} \to \cE\to 0, 
$$
where $$a= r({\bf v})(\beta - (e-1) \alpha - \sum_{i=1}^k \alpha_i +1), \quad b= r({\bf v})(\beta- e\alpha - \sum_{i=1}^k \alpha_i +1),$$  $$c = r({\bf v})(\alpha - \sum_{i=1}^k \alpha_i +1) \quad d_i = r({\bf v}) \alpha_i.$$
\end{theorem}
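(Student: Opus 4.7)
The plan is to imitate the proof of Theorem 3.10 (for blowups of $\PP^2$), now using the larger strong exceptional collection obtained by combining Example 3.2 for the Hirzebruch base with the exceptional divisors from the blowup. Specifically, I would work with the ordered collection
$$A_1=\OO_X(-E-(e+1)F),\ A_2=\OO_X(-E-eF),\ A_3=\OO_X(-F),\ A_4=\OO_X(-E_1),\ldots,A_{k+3}=\OO_X(-E_k),\ \OO_X,$$
and first verify that it is a strong exceptional collection. For $s<t$, each $\Ext^i(A_t,A_s)$ or (for $i>0$) $\Ext^i(A_s,A_t)$ is the cohomology of a line bundle whose class has one of the forms treated in Lemma 2.5, such as $-E+mF+\sum_{j\in I}E_j$, $\pm F+\sum_{j\in I}E_j$, $\pm E_j+\sum_{i\in I, i\neq j}E_i$, or effective classes $E+mF-E_j$ of smooth rational curves. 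These all vanish either by Lemma 2.5 directly or by the additive extension argument there (using the exact sequence $0\to\OO_X(D)\to\OO_X(D+C)\to\OO_C(D+C)\to 0$ for a smooth rational curve $C$ with $C\cdot D\geq -C^2-1$).

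Second, I would compute the exponents using Lemma 3.5 together with the Riemann--Roch formula in Lemma 2.6 applied to the character ${\bf v}$ (with $\chi({\bf v})=0$), and verify they match the stated values of $a,b,c,d_i$. Nonnegativity is then immediate from the three hypotheses: $\alpha_i\geq 0$ gives $d_i\geq 0$; $\alpha-\sum\alpha_i\geq -1$ gives $c\geq 0$; and $\beta-\sum\alpha_i+1\geq\max((e-1)\alpha,e\alpha)$ gives both $a\geq 0$ and $b\geq 0$ (the two maxima being responsible for the two different exponents since the split depends on the sign of $\alpha$, analogous to the role of $\delta-2\sum\alpha_i+2$ in Theorem 3.10).

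Third, I would prove local freeness of the cokernel of a general map $\phi$ via Lemma 3.8, checking that each $\sHom(A_1,A_t)$ for $t\geq 2$ is globally generated: these are $\OO_X(F)$, $\OO_X(E+eF)$, and $\OO_X(E+(e+1)F-E_i)$, each of which is globally generated on $X$ (the last one because it is the pullback of a globally generated line bundle on $\F_e$ twisted down by a single exceptional divisor over a point where a generic section does not vanish, or equivalently because $|E+(e+1)F|$ contains divisors through any prescribed point $p_i$ off of $E$). Next, $F$-prioritariness follows from Lemma 3.6 by checking that $\Ext^1(A_1,A_t\otimes F^{-1})=0$ for $t\geq 2$ and $\Ext^2(A_s,A_t\otimes F^{-1})=0$ for $s,t\geq 2$; each such group is again the cohomology of a line bundle in the standard vanishing list of Lemma 2.5 (for example $\Ext^2(A_s,A_t\otimes F^{-1})$ gives $H^2$ of line bundles of the form $\OO_X, \OO_X(E+(e-2)F), \OO_X(E+(e-1)F-E_i), \OO_X(-F+E_j-E_i)$, all of which vanish by Serre duality since their twists by $-K_X=2E+(e+2)F-\sum E_i$ have strictly positive slope in appropriate directions).

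Finally, I would conclude via Proposition 3.7: the open subset of $\Hom(U,V)$ parameterizing locally free quotients gives a complete family of $F$-prioritary sheaves with no cohomology, and since Walter's Theorem 2.4 guarantees that $\cP_{X,F}({\bf v})$ is smooth and irreducible, a general sheaf in the stack admits the asserted resolution and has vanishing cohomology. I expect the main obstacle to be the bookkeeping in Step 2 (nonnegativity of $a$ and $b$), since the two different exponents play the role of the $\lceil\cdot\rceil$/$\lfloor\cdot\rfloor$ rounding present on Hirzebruch surfaces in Theorem 3.1, and one must confirm that the hypotheses precisely encode the correct inequalities in both the $\alpha\geq 0$ and $\alpha<0$ regimes; the exceptional-collection and prioritariness checks, while numerous, are routine applications of Lemma 2.5.
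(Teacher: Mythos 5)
Your proposal is correct and follows essentially the same route as the paper: the paper's own proof is exactly this sketch (the strong exceptional collection from Lemma \ref{lem-BasicH}, exponents and their nonnegativity via Lemma \ref{lem-exponents} and the hypotheses, local freeness via Lemma \ref{lem-locallyfree}, prioritariness via Lemma \ref{lem-Prioritary}, completeness via Proposition \ref{lem-complete}, and Walter's irreducibility to conclude), with the verifications left to the reader. Your filled-in details (global generation of the $\sHom$ sheaves, the vanishing checks, and the nonnegativity of $a,b,c,d_i$ from the three stated inequalities) are accurate; only the side remark comparing $\max((e-1)\alpha,e\alpha)$ to the case split in the blowup-of-$\PP^2$ theorem is a slight red herring, since here all summands other than $A_1$ always sit on the right of the resolution.
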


\begin{proof}
Since the proof of this theorem is similar to the proof of Theorems \ref{thmMainH} and \ref{thm-veryweak}, we leave the routine verifications to the reader.  By Lemma \ref{lem-BasicH}, the sequence
$$\OO_X(-E-(e+1)F), \quad \OO_X(-E-eF) \quad \OO_X(-F), \quad \OO_X(-E_1), \dots, \OO_X(-E_k), \quad \OO_X$$ is a strong exceptional collection. By Lemma \ref{lem-exponents}, the exponents are as claimed and are positive by assumption. Lemma \ref{lem-locallyfree} applies to show that the cokernel of a general map $\phi$ is locally free. By Lemma \ref{lem-BasicH}, Lemma \ref{lem-Prioritary} applies. By Lemmas \ref{lem-Prioritary} and \ref{lem-complete}, one obtains a complete family of $F$-prioritary sheaves. Since the stack of $F$-prioritary sheaves is irreducible, the theorem follows.
\end{proof}

As usual, we obtain the following corollary.

\begin{corollary}
Let $H$ be an ample divisor on $X$ such that $H \cdot (-2E - (e+1)F + \sum_{i=1}^k E_i )< 0$. Let ${\bf v}$ be a stable Chern character satisfying the assumptions of Theorem \ref{thm-veryweakH}. Then $M_{X,H}({\bf v})$ is unirational and satisfies weak Brill-Noether. 
\end{corollary}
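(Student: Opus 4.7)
The plan is to mirror exactly the argument of Corollary \ref{cor-veryweak}, using Theorem \ref{thm-veryweakH} in place of Theorem \ref{thm-veryweak}. The first step is to recognize the ample-divisor hypothesis as the standard Walter-type inequality $H\cdot(K_X+F)<0$. Since $X$ is the blowup of $\F_e$ at $p_1,\dots,p_k$, we have $K_X = -2E-(e+2)F+\sum_{i=1}^k E_i$, hence
\[
K_X + F = -2E-(e+1)F+\sum_{i=1}^k E_i,
\]
so the numerical assumption $H\cdot(-2E-(e+1)F+\sum_{i=1}^k E_i)<0$ is precisely $H\cdot(K_X+F)<0$.

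Next, I would invoke Walter's result (the second statement after Theorem \ref{thm-Walter}, together with \cite[Proposition 2]{Walter}): on the birationally ruled surface $X$ with fiber class $F$, the condition $H\cdot(K_X+F)<0$ forces every $H$-Gieseker semistable sheaf to be $F$-prioritary. Hence the moduli stack $\cM_{X,H}({\bf v})$ is an open substack of $\cP_{X,F}({\bf v})$. Since ${\bf v}$ is assumed stable, $\cM_{X,H}({\bf v})$ is nonempty, and since $\cP_{X,F}({\bf v})$ is irreducible by Theorem \ref{thm-Walter}, $\cM_{X,H}({\bf v})$ is a nonempty dense open substack of $\cP_{X,F}({\bf v})$.

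Now Theorem \ref{thm-veryweakH} supplies a dense open subset of $\cP_{X,F}({\bf v})$ whose sheaves have no cohomology and admit the explicit resolution
\[
0\to \OO_X(-E-(e+1)F)^{a}\to \OO_X(-E-eF)^{b}\oplus \OO_X(-F)^{c}\oplus \bigoplus_{i=1}^k \OO_X(-E_i)^{d_i} \to \cE\to 0.
\]
Intersecting this dense open with the open substack $\cM_{X,H}({\bf v})$ yields a dense open subset of $\cM_{X,H}({\bf v})$ consisting of stable sheaves that have no cohomology and arise as cokernels in the above resolution. This immediately gives weak Brill-Noether. Unirationality then follows in the standard way: the family of all such cokernels is parameterized by an open subset $S$ of the affine space $\Hom\bigl(\OO_X(-E-(e+1)F)^{a},\,\OO_X(-E-eF)^{b}\oplus \OO_X(-F)^{c}\oplus \bigoplus_i \OO_X(-E_i)^{d_i}\bigr)$, the classifying morphism $S\to \cM_{X,H}({\bf v})$ is dominant (its image contains the dense open above, since Proposition \ref{lem-complete} shows the family is complete), and $S$ is rational as a Zariski open in affine space.

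The only step requiring any care is verifying the equivalence of the hypothesis with $H\cdot(K_X+F)<0$ and citing Walter correctly; everything else is a formal consequence of Theorem \ref{thm-veryweakH} combined with the general Walter machinery already used in Corollary \ref{cor-veryweak}. There is no substantive obstacle, which is why the authors preface the statement with ``as usual''.
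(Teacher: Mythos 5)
Your proposal is correct and follows essentially the same route the paper takes (the one-line argument of Corollary \ref{cor-veryweak} transported to the Hirzebruch blowup setting): the hypothesis is exactly $H\cdot(K_X+F)<0$ since $K_X=-2E-(e+2)F+\sum_i E_i$, Walter then makes $\cM_{X,H}({\bf v})$ an open substack of the irreducible stack $\cP_{X,F}({\bf v})$, and Theorem \ref{thm-veryweakH} supplies the cohomology vanishing and the resolution giving unirationality. No gaps.
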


\section{Blowups of $\P^2$ revisited}\label{secPrioritary}

Let $X = \Bl_{p_1,\ldots,p_k} \P^2$ be the blowup of $\P^2$ at $k$ distinct points $p_1,\ldots,p_k\in \P^2$.  In this section we study the weak Brill-Noether problem for $X$.  In particular, we give some sufficient conditions on a character ${\bf v}$ of Euler characteristic $0$ for weak Brill-Noether to hold for certain moduli spaces of sheaves of character ${\bf v}$.  We will prove sharper results when $X$ is a del Pezzo surface of degree at least $4$ in the next section, using the basic tools developed in this section as a starting point. Fix $F = L-E_1$, so that the complete series $|F|$ induces a map $X\to \P^1$ with fiber class $F$, exhibiting $X$ as a birationally ruled surface.

If $H$ is an ample divisor such that $H \cdot (K_X+F) < 0$, then any $H$-semistable sheaf is automatically $F$-prioritary, so that $\cP_{X,F}({\bf v})$ contains the stack $\cM_{X,H}({\bf v})$ of $H$-semistable sheaves as a dense open substack whenever $H$-semistable sheaves of character ${\bf v}$ exist.  Note that such polarizations $H$ always exist; since $F \cdot (K_X+F) = -2$, any ample divisor $H$ spanning a ray sufficiently close to the ray spanned by the nef divisor $F$ will do the trick.  In what follows we work primarily with prioritary sheaves instead of semistable sheaves. Prioritary sheaves have the advantage that they are much easier to construct than semistable sheaves.

\subsection{Prioritary direct sums of line bundles} 
By Walter's irreducibility theorem, in order to prove a general prioritary sheaf of some character ${\bf v}$ has no cohomology, it suffices to construct a particular such sheaf.  It is often possible to do this by considering elementary modifications of direct sums of line bundles.  First we give a criterion for determining when a direct sum of line bundles is prioritary. 
\begin{lemma}\label{lem-prioritary}
Let $\cE = L_1\oplus \cdots \oplus L_r$ be a direct sum of line bundles.  Suppose that $N$ is a nef divisor such that $$N \cdot (L_i-L_j) < -N \cdot (F+K_X)$$ for all $i,j$.  Then $\cE$ is $F$-prioritary.
\end{lemma}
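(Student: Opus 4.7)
The plan is to unpack the definition of $F$-prioritary, split the relevant $\Ext^2$ into a direct sum of line-bundle cohomology groups via the decomposition $\cE = \bigoplus L_i$, apply Serre duality to convert each summand into an $H^0$, and then kill that $H^0$ by intersecting with the nef divisor $N$.

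In more detail: I would first write
\[
\Ext^2(\cE, \cE\otimes F^{-1}) = \bigoplus_{i,j} \Ext^2(L_i, L_j\otimes F^{-1}) = \bigoplus_{i,j} H^2(X, L_j - L_i - F),
\]
so it suffices to show each $H^2(X, L_j - L_i - F)$ vanishes. Serre duality rewrites this as
\[
H^2(X, L_j - L_i - F) \cong H^0(X, K_X + F + L_i - L_j)^*,
\]
so I am reduced to showing that the line bundle $M_{ij} := \OO_X(K_X + F + L_i - L_j)$ has no global sections for every pair $i,j$.

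For this I would invoke the standard fact that if $N$ is a nef divisor on $X$ and $D$ is a divisor with $N \cdot D < 0$, then $H^0(X, \OO_X(D)) = 0$ (any effective representative of $D$ would have nonnegative intersection with $N$). Under the hypothesis
\[
N \cdot (L_i - L_j) < -N \cdot (F + K_X),
\]
we immediately get $N \cdot M_{ij} = N\cdot(K_X + F + L_i - L_j) < 0$, so $H^0(X, M_{ij}) = 0$, giving the required vanishing.

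There is essentially no obstacle here; this is a bookkeeping argument using Serre duality and the trivial nefness criterion for $H^0$-vanishing. The only thing to be careful of is that the hypothesis is assumed for \emph{all} pairs $(i,j)$ (including $i = j$, which gives $N\cdot(F+K_X) > 0$ and is automatic in the situations where this lemma will be applied), so that every summand of the $\Ext^2$ decomposition is handled simultaneously.
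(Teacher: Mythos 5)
Your proof is correct and follows essentially the same route as the paper: reduce to $\Ext^2(L_i,L_j\otimes F^{-1})\cong H^0(K_X+F+L_i-L_j)^*$ via Serre duality, then note that the hypothesis forces $N\cdot(K_X+F+L_i-L_j)<0$, so the divisor cannot be effective since $N$ is nef. No gaps.
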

\begin{proof}
The vanishing $\Ext^2(\cE,\cE(-F))= 0$ will follow if $\Ext^2(L_i,L_j(-F)) =0$ for all $i,j$.  We have $$\Ext^2(L_i,L_j(-F)) \cong H^2( -L_i + L_j -F) \cong H^0(L_i-L_j+F+K_X)^*.$$ Then $$N \cdot (L_i-L_j+F+K_X)<0,$$ and since $N$ is nef we conclude $L_i-L_j+F+K_X$ is not effective.    Therefore $\Ext^2(L_i,L_j(-F))=0$.
\end{proof}

Further prioritary sheaves of lower Euler characteristic can be constructed by elementary modifications.

\begin{lemma}\label{lem-elementary}
Let $\cE$ be an $F$-prioritary sheaf on $X$, and let $p\in X$ be a point where $\cE$ is locally free.  Pick a surjection $\cE\to \OO_p$, and consider the elementary modification $\cE'$ defined by the sequence $$0\to \cE'\to \cE\to \OO_p \to 0.$$ Then $\cE'$ is $F$-prioritary with $\chi(\cE') = \chi(\cE)-1$.  If furthermore $h^0(\cE) >0$ and $p$ and the map $\cE\to \OO_p$ are general, then additionally $h^0(\cE') = h^0(\cE) - 1$.
\end{lemma}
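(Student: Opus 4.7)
The Euler characteristic statement $\chi(\cE')=\chi(\cE)-1$ is immediate from additivity of $\chi$ in the defining short exact sequence, since $\chi(\OO_p)=1$. The remaining content is to verify $F$-priority of $\cE'$ and the generic-behavior statement on $h^0$.

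For priority, the plan is to show $\Ext^2(\cE',\cE'(-F))=0$ by two applications of $\Hom/\Ext$. First, apply $\Hom(-,\cE'(-F))$ to $0\to\cE'\to\cE\to\OO_p\to 0$; since $X$ is a surface, $\Ext^3(\OO_p,\cE'(-F))=0$, so it suffices to show $\Ext^2(\cE,\cE'(-F))=0$. Next, apply $\Hom(\cE,-)$ to the twist $0\to\cE'(-F)\to\cE(-F)\to\OO_p\to 0$ (noting $\OO_p\te F^{-1}\cong\OO_p$). One obtains an exact sequence
$$\Ext^1(\cE,\OO_p)\to \Ext^2(\cE,\cE'(-F))\to \Ext^2(\cE,\cE(-F)).$$
The right term vanishes by the $F$-priority hypothesis on $\cE$. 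For the left term, the local freeness of $\cE$ at $p$ gives $\mathcal{E}xt^i(\cE,\OO_p)=0$ for $i>0$, so by the local-to-global spectral sequence $\Ext^i(\cE,\OO_p)=H^i(X,\mathcal{H}om(\cE,\OO_p))$, and since $\mathcal{H}om(\cE,\OO_p)$ is a skyscraper at $p$, this vanishes for $i>0$. Hence $\Ext^2(\cE,\cE'(-F))=0$ and therefore $\Ext^2(\cE',\cE'(-F))=0$.

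For the final statement about $h^0$, the long exact sequence in cohomology from $0\to\cE'\to\cE\to\OO_p\to 0$ reduces the claim to showing that the evaluation map $H^0(\cE)\to H^0(\OO_p)=k$ is surjective. Fix a nonzero section $s\in H^0(\cE)$ (possible since $h^0(\cE)>0$); its vanishing locus is a proper closed subset, so for $p$ outside this locus the image $\bar{s}(p)\in\cE_p/\mathfrak{m}_p\cE_p$ is nonzero. A surjection $\cE\to\OO_p$ is the same data as a nonzero linear functional $\lambda\in(\cE_p/\mathfrak{m}_p\cE_p)^\vee$, and a general such $\lambda$ is nonzero on the fixed vector $\bar s(p)$. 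The composition $H^0(\cE)\to\cE_p/\mathfrak{m}_p\cE_p\xrightarrow{\lambda}k$ then sends $s$ to a nonzero value, so evaluation is surjective and $h^0(\cE')=h^0(\cE)-1$.

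The main (and only) mild subtlety I anticipate is the vanishing of $\Ext^i(\cE,\OO_p)$ for $i>0$ at a point where $\cE$ is merely locally free locally (not globally), which is why one must go through the local/global $\mathcal{E}xt$ argument rather than invoking Serre duality; everything else is a formal diagram chase or a dimension count on a surface.
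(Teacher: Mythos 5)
Your proposal is correct and follows essentially the same route as the paper's proof: the same reduction to the vanishing of $\Ext^2(\cE,\cE'(-F))$, the same exact sequence $\Ext^1(\cE,\OO_p)\to \Ext^2(\cE,\cE'(-F))\to \Ext^2(\cE,\cE(-F))$ with the same two vanishings, and the same genericity argument for $h^0$. The only point the paper records that you omit is the (trivial) observation that $\cE'$ is torsion-free as a subsheaf of the torsion-free sheaf $\cE$, which is required by the definition of $F$-prioritary.
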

\begin{proof}
Clearly $\cE'$ is torsion-free since $\cE$ is.  Then $\Ext^2(\cE',\cE'(-F))$ is a quotient of $\Ext^2(\cE,\cE'(-F))$, so it suffices to prove the latter group vanishes.  Tensoring the exact sequence by $\OO_X(-F)$ and applying $\Hom(\cE,-)$, we get an exact sequence $$\Ext^1(\cE,\OO_p)\to \Ext^2(\cE,\cE'(-F))\to \Ext^2(\cE,\cE(-F)).$$ Then $\Ext^2(\cE,\cE(-F))=0$ since $\cE$ is prioritary and $\Ext^1(\cE,\OO_p)=0$ since $\cE$ is locally free at $p$.
If $h^0(\cE) >0$ and $p$ is general, then $\cE$ has a section which does not vanish at $p$.  Therefore a general surjection $\cE\to \OO_p$ induces a surjective map $H^0(\cE) \to H^0(\OO_p)$, and we conclude $h^0(\cE') = h^0(\cE)-1$.
\end{proof}

These two lemmas motivate the next definition.

\begin{definition}\label{def-good}
Fix a nef divisor $N$ such that $-N \cdot (F+K_X) \geq 2$.  We call a direct sum $\cE = L_1\oplus \cdots \oplus L_r$ of line bundles ($N$-)\emph{good} if the following properties are satisfied.
\begin{enumerate}
\item $\cE$ has no higher cohomology: $h^i(\cE) = 0$ for $i>0$.  
\item For all $i,j$ we have $N \cdot (L_i-L_j)\leq 1$.  In particular, $\cE$ is $F$-prioritary by Lemma \ref{lem-prioritary}.
\end{enumerate}
Fix an integer $r\geq 1$.  We let $$\Lambda_{r,N} = \{c_1(\cE) : \textrm{$\cE$ is a rank $r$ good direct sum of line bundles}\}\subset N^1(X)_\Z.$$
 \end{definition}

The next result follows immediately from Lemma \ref{lem-elementary} and the definitions.

\begin{corollary}\label{cor-latticeVanish}
Suppose ${\bf v}\in K(X)$ has $\chi({\bf v}) = 0$ and $r=r({\bf v}) >0$, and fix a nef divisor $N$ as in Definition \ref{def-good}.  If $c_1({\bf v})\in \Lambda_{r,N}$, then $\cP_{X,F}({\bf v})$ is nonempty and a general sheaf $\cE\in \cP_{X,F}({\bf v})$ has no cohomology.
\end{corollary}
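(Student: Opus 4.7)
The plan is to take a good direct sum $\cE_0=L_1\oplus\cdots\oplus L_r$ witnessing $c_1({\bf v})\in \Lambda_{r,N}$ and perform a sequence of elementary modifications until the Euler characteristic is reduced from $\chi(\cE_0)$ down to $0=\chi({\bf v})$, while keeping cohomology under control. Once I have produced any $F$-prioritary sheaf of character ${\bf v}$ with no cohomology, Walter's irreducibility theorem (Theorem \ref{thm-Walter}) together with semicontinuity will finish the job, since $\cM\mapsto h^i(\cM)$ is upper semicontinuous on the smooth irreducible stack $\cP_{X,F}({\bf v})$.

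Concretely, first I note that $\cE_0$ is locally free, $F$-prioritary by part (2) of Definition \ref{def-good} via Lemma \ref{lem-prioritary}, and has no higher cohomology by part (1), so $\chi(\cE_0)=h^0(\cE_0)\geq 0$. Set $n=\chi(\cE_0)\geq 0$. I build a sequence $\cE_0,\cE_1,\dots,\cE_n$ inductively: given $\cE_i$ with $h^1=h^2=0$ and $h^0(\cE_i)=n-i>0$, choose a general point $p\in X$ where $\cE_i$ is locally free and a general surjection $\cE_i\to \OO_p$, and let $\cE_{i+1}$ be the kernel. By Lemma \ref{lem-elementary}, $\cE_{i+1}$ is $F$-prioritary with $\chi(\cE_{i+1})=\chi(\cE_i)-1$ and $h^0(\cE_{i+1})=h^0(\cE_i)-1$. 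The long exact sequence in cohomology attached to $0\to\cE_{i+1}\to\cE_i\to\OO_p\to 0$ then gives $H^2(\cE_{i+1})\cong H^2(\cE_i)=0$ and, because the evaluation $H^0(\cE_i)\to H^0(\OO_p)$ is surjective for general choices whenever $h^0(\cE_i)>0$, also $H^1(\cE_{i+1})=0$. Thus the inductive hypothesis is preserved.

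After $n$ steps, $\cE_n$ is $F$-prioritary with $\chi(\cE_n)=0$, $h^0(\cE_n)=0$, $h^1(\cE_n)=0$ and $h^2(\cE_n)=0$. Since $\cE_n$ has rank $r$, first Chern class $c_1({\bf v})$ (elementary modifications do not alter rank or $c_1$), and Euler characteristic $0=\chi({\bf v})$, Riemann–Roch forces $\operatorname{ch}_2(\cE_n)=\operatorname{ch}_2({\bf v})$, so $\cE_n$ has Chern character ${\bf v}$. In particular $\cP_{X,F}({\bf v})$ is nonempty. By Theorem \ref{thm-Walter} this stack is smooth and irreducible, and by semicontinuity the locus where $h^0,h^1,h^2$ all vanish is an open neighborhood of $\cE_n$, hence dense. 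Therefore a general sheaf in $\cP_{X,F}({\bf v})$ has no cohomology.

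There is really only one subtle point worth flagging: one must confirm that $h^1$ vanishes at every stage of the modification process, not just at the end. Once one observes that a general section of $\cE_i$ does not vanish at a general point $p$ (which is where the hypothesis $h^0(\cE_i)>0$ is used), the long exact sequence argument above handles this, so the induction goes through without any further obstacle.
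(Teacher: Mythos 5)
Your proof is correct and follows essentially the same route the paper intends: the paper derives this corollary directly from Lemma \ref{lem-elementary} and Definition \ref{def-good} by exactly the process you describe (start from a good direct sum witnessing $c_1({\bf v})\in\Lambda_{r,N}$, apply $\chi(\cE_0)$ general elementary modifications to kill the Euler characteristic while preserving prioritariness and cohomology vanishing, then invoke irreducibility of $\cP_{X,F}({\bf v})$ and semicontinuity). Your explicit tracking of $h^1$ through the long exact sequence is a welcome detail the paper leaves implicit, but it is not a different argument.
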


Our next result is our strongest result on the weak Brill-Noether problem for an arbitrary blowup of $\P^2$.  

\begin{theorem}\label{thm-blowup}
Let $X = \Bl_{p_1,\ldots,p_k} \P^2$ be a blowup of $\P^2$ at $k$ distinct points.  Let ${\bf v}\in K(X)$ with $r=r({\bf v})>0$, and write $$\nu({\bf v}) := \frac{c_1({\bf v})}{r({\bf v})} = \delta L - \alpha_1E_1-\cdots -\alpha_k E_k,$$ so that the coefficients $\delta,\alpha_i\in \Q$.  Assume that $\delta\geq 0$ and $\alpha_i \geq 0$ for all $i$.  Suppose that the line bundle $$\lfloor \delta \rfloor L - \lceil \alpha_1\rceil E_1-\cdots - \lceil\alpha_k\rceil E_k$$ has no higher cohomology.  Then choosing $N = L$, we have $c_1({\bf v}) \in \Lambda_{r,L}$.  In particular, if $\chi({\bf v}) = 0$, then $\cP_{X,F}({\bf v})$ is nonempty and the general $\cE\in \cP_{X,F}({\bf v})$ has no cohomology.
\end{theorem}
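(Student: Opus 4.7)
The plan is to apply Corollary \ref{cor-latticeVanish} with $N = L$ by exhibiting an explicit $L$-good direct sum of line bundles whose first Chern class equals $c_1({\bf v})$. Before constructing this sum, I verify that $N = L$ is admissible in the sense of Definition \ref{def-good}: using $K_X = -3L + E_1 + \cdots + E_k$ and $F = L - E_1$, we have $F + K_X = -2L + E_2 + \cdots + E_k$, and so $-L \cdot (F + K_X) = 2$, as required.

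Set $a = \lfloor \delta \rfloor$, $b_j = \lceil \alpha_j \rceil$, and $M = aL - b_1 E_1 - \cdots - b_k E_k$. By hypothesis $M$ has no higher cohomology, and the nonnegativity of $\delta$ and the $\alpha_j$ forces $a \geq 0$ and $b_j \geq 0$. Since $r\delta$ and $r\alpha_j$ are integers, write
\[
r\delta = ra + s, \qquad r\alpha_j = r b_j - t_j,
\]
with integers $0 \leq s, t_j \leq r - 1$. I then propose the candidate line bundles
\[
M_i = M + \epsilon_i L + \sum_{j=1}^k \eta_{i,j} E_j \qquad (i = 1, \ldots, r),
\]
where $\epsilon_i = 1$ iff $i \leq s$ and $\eta_{i,j} = 1$ iff $i \leq t_j$ (and all other $\epsilon_i, \eta_{i,j}$ equal $0$). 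A direct count gives $\sum_i M_i = rM + sL + \sum_j t_j E_j = c_1({\bf v})$, so $\cE := M_1 \oplus \cdots \oplus M_r$ has the correct first Chern class; moreover $L \cdot M_i \in \{a, a+1\}$, so $L \cdot (M_i - M_{i'}) \leq 1$, verifying the second condition of Definition \ref{def-good}.

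The heart of the argument is showing that each $M_i$ has no higher cohomology, and I would carry this out by iteratively applying Lemma \ref{lem-BasicBP^2}(2), starting from $M$ and adjoining the summands $\epsilon_i L$ and $\eta_{i,j} E_j$ of the correction $M_i - M$ one at a time. When adjoining $L$, the current class has intersection $a \geq 0 \geq -2$ with $L$, since $L \cdot E_j = 0$ for all $j$. When adjoining $E_j$, the current class has intersection exactly $b_j \geq 0$ with $E_j$, because each $E_j$ appears at most once in the correction $M_i - M$ and $E_{j'} \cdot E_j = 0$ for $j' \neq j$. Consequently Lemma \ref{lem-BasicBP^2}(2) applies at every step, and each $M_i$ has vanishing higher cohomology. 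Hence $\cE$ is $L$-good with $c_1(\cE) = c_1({\bf v})$, so $c_1({\bf v}) \in \Lambda_{r, L}$. The final assertion about $\cP_{X, F}({\bf v})$ then follows from Corollary \ref{cor-latticeVanish} under the hypothesis $\chi({\bf v}) = 0$.

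The main obstacle, such as it is, is the bookkeeping for this last step. The distribution scheme above is rigged precisely so that no $E_j$ is ever added to a fixed $M_i$ more than once, which is exactly what is needed to keep the intersection numbers at or above the thresholds required by Lemma \ref{lem-BasicBP^2}(2); the nonnegativity assumptions on $\delta$ and the $\alpha_j$ are essential here, and without them one would have to work much harder to locate a starting class $M$ from which cohomology vanishing propagates.
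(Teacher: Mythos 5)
Your proof is correct and follows essentially the same route as the paper: both start from the floor/ceiling line bundle $\lfloor\delta\rfloor L-\sum\lceil\alpha_j\rceil E_j$, distribute the fractional parts of $r\delta$ and $r\alpha_j$ across $r$ line bundles differing from it by adding $L$ or single $E_j$'s, and invoke Lemma \ref{lem-BasicBP^2}(2) together with Corollary \ref{cor-latticeVanish}. The paper states the construction more tersely; your version merely makes the bookkeeping and the propagation of higher-cohomology vanishing explicit.
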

\begin{proof}
In more detail, write $$\nu({\bf v}) = \left(d+\frac{p}{r}\right)L - \left(a_1+\frac{p_1}{r}\right)E_1-\cdots- \left(a_k+\frac{p_k}{r}\right)E_k$$ where $d = \lfloor \delta\rfloor$ and $a_i = \lfloor \alpha_i\rfloor.$ Suppose $M = eL - b_1E_1-\cdots-b_kE_k$ is a line bundle such that $e\in \{d,d+1\}$ and $b_i\in \{a_i,a_i+1\}$ for all $i$.  Our assumptions imply that $M$ has no higher cohomology.  Now we can construct a direct sum $\cE = L_1\oplus \cdots \oplus L_r$ of line bundles of this form such that $c_1(\cE) = c_1({\bf v})$.  Indeed, we only need to ensure that exactly $p$ of the $L_i$'s have a coefficient of $d+1$ on $L$, and similarly for the other coefficients.  By construction, $\cE$ is $L$-good.
\end{proof}

\section{Del Pezzo surfaces}\label{secDelPezzo}

In this section we improve on Theorem \ref{thm-blowup} in the special case of a smooth del Pezzo surface $X$ of degree $4\leq d \leq 7$.  Thus $X = \Bl_{p_1,\ldots,p_k} \P^2$ is a blowup of $\P^2$ at $2\leq k = 9-d \leq 5$ points with no three lying on a line, and $-K_X = 3L - E_1-\cdots -E_k$ is ample.  As in the previous section we fix a fiber class $F = L-E_1$ and study $F$-prioritary sheaves.  Our main theorem is the following.

\begin{theorem}\label{thm-delPezzoMain}
Let $X$ be a del Pezzo surface of degree $4\leq d \leq 7$.  Let ${\bf v}\in K(X)$ with $\chi({\bf v}) = 0$, and suppose $c_1({\bf v})$ is nef.  Then the stack $\cP_{X,F} ({\bf v})$ of prioritary sheaves is nonempty and a general $\cE\in \cP_{X, F}({\bf v})$ has no cohomology.
\end{theorem}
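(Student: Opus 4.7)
The plan is to apply the framework of Section~\ref{secPrioritary} with the choice $N = -K_X$. Since $-K_X$ is ample on $X$ and $F\cdot K_X = (L-E_1)\cdot K_X = -3+1 = -2$, we have
\[
-N\cdot(F+K_X) = K_X\cdot F + K_X^2 = -2 + d,
\]
so the hypothesis $d\geq 4$ gives precisely $-N\cdot(F+K_X)\geq 2$, the condition required in Definition~\ref{def-good}. By Corollary~\ref{cor-latticeVanish}, the theorem then reduces to showing that $c_1({\bf v})\in\Lambda_{r,-K_X}$, i.e., to writing $c_1({\bf v})=c_1(L_1)+\cdots+c_1(L_r)$ for line bundles $L_j$ satisfying (i) $H^i(X,L_j)=0$ for $i>0$, and (ii) $|(-K_X)\cdot(L_i-L_j)|\leq 1$ for all $i,j$.

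For (i) I would invoke two easy sources of cohomology vanishing. First, every nef line bundle $M$ on $X$ has no higher cohomology: Serre duality gives $H^2(M)=H^0(K_X-M)^*=0$ because $(-K_X)\cdot(K_X-M)= -d + K_X\cdot M \leq -d<0$ forces $K_X-M$ to be non-effective, while Kawamata--Viehweg applied to the ample divisor $M-K_X$ (sum of a nef and an ample class) yields $H^1(M)=0$. Second, for every $(-1)$-curve class $C$ on $X$ --- namely the $E_i$, the classes $L-E_i-E_j$ with $i\neq j$, and, when $d=4$, also $2L-E_1-\cdots-E_5$ --- one has $H^i(\OO_X(C))=0$ for $i>0$, as follows from $\chi(\OO_X(C))=1$ by Riemann--Roch, $h^0(\OO_X(C))=1$ (the curve $C$ is irreducible and rigid), and $H^2=0$ by the Serre duality argument above. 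This supplies us with candidate line bundles of every $-K_X$-degree: $\OO_X$ in degree $0$; the $(-1)$-curves in degree $1$; and nef classes such as $L-E_i$, $-K_X-C$, or $-K_X$ itself in each higher degree.

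The heart of the argument is the combinatorial decomposition in (ii)--(iii). Set $s=(-K_X)\cdot c_1({\bf v})\geq 0$, $a=\lfloor s/r\rfloor$, and $p=s-ra$; then exactly $p$ of the summands must have $-K_X$-degree $a+1$ and the remaining $r-p$ must have degree $a$. When $a\geq 2$, I would argue by induction on $r$, using nefness of $c_1({\bf v})$ to peel off one nef summand $L_1$ of the correct degree so that the residual class $c_1({\bf v})-c_1(L_1)$ is again nef (after renormalization by $r-1$) and the induction applies; the existence of enough integral nef classes of any prescribed anticanonical degree $\geq 2$ follows from the structure of the nef cone of $X$ (which has finitely many extremal rays on a del Pezzo surface of degree $4\leq d\leq 7$). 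When $a\in\{0,1\}$, only $\OO_X$ and the $(-1)$-curves qualify as summands, and the nefness of $c_1({\bf v})$ must be shown to force an explicit presentation $c_1({\bf v})=\sum n_C C$ with $n_C\in\Z_{\geq 0}$, $\sum n_C = p$, with the remaining $r-p$ copies taken to be $\OO_X$. This reduces to a finite, case-by-case computation using the explicit list of $(-1)$-curves for each $d\in\{4,5,6,7\}$ from Example~\ref{ex-delPezzo}.

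The main obstacle will be the low-$a$ case of this combinatorial decomposition. Cohomology vanishing is essentially automatic from the del Pezzo structure, but matching an arbitrary nef Chern class by a sum of $r$ line bundles whose individual $-K_X$-degrees differ by at most one is genuinely restrictive when the average degree is $0$ or $1$, since the supply of admissible summands is then limited to $\OO_X$ and the $(-1)$-curve classes. I expect the case $d=4$ to be the most delicate, as the nef cone of $D_4$ has the most extremal rays and the $(-1)$-curve configuration, including the exceptional class $2L-E_1-\cdots-E_5$, is the richest; the nef hypothesis on $c_1({\bf v})$ should be exactly what is needed to carry out the decomposition in every case.
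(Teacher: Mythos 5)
Your reduction is the same as the paper's: take $N=-K_X$ (the inequality $-N\cdot(F+K_X)=d-2\geq 2$ is exactly why $d\geq 4$ is assumed), and reduce via Corollary~\ref{cor-latticeVanish} to showing $c_1({\bf v})\in\Lambda_{r,-K_X}$, i.e.\ Proposition~\ref{prop-delPezzoContained}. The gap is in the combinatorial core. You restrict the pool of admissible summands to nef line bundles (for degree $\geq 2$) and to $\OO_X$ and $(-1)$-curves (for degree $0$ or $1$). That is too restrictive, and the decomposition genuinely fails under that restriction. Take $X=\Bl_{p_1,p_2}\P^2$ (degree $7$), $c_1({\bf v})=2L$, $r=3$: every summand must have $(-K_X)$-degree $2$, and the only nef line bundles of $(-K_X)$-degree $2$ on $X$ are $L-E_1$ and $L-E_2$ (if $dL-a_1E_1-a_2E_2$ is nef then $d\geq a_1+a_2=3d-2$, forcing $d=1$). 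No nonnegative combination of three of these equals $2L$, and peeling off either one leaves $L+E_i$, which is not nef, so your induction for $a\geq 2$ cannot start. The paper's own Remark after Proposition~\ref{prop-delPezzoRounding} records the correct decomposition $2L=(L-E_1)+(L-E_2)+(E_1+E_2)$, where the essential summand $E_1+E_2$ is \emph{not} nef (it meets $E_1$ negatively) but still has no higher cohomology. The same issue infects your low-degree case: Definition~\ref{def-good} only requires vanishing of higher cohomology, and there are many non-nef, non-exceptional classes satisfying it (e.g.\ $E_1-E_2$ in degree $0$, $L-2E_1$ in degree $1$); conversely, nef classes of small anticanonical degree need not be nonnegative sums of $(-1)$-curves of the right length.

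What the paper does instead, and what your proposal is missing, is twofold. First, it exploits the Weyl group action on $\Pic(X)$, which preserves $K_X$, the nef cone, and cohomology of line bundles, hence preserves $\Lambda_{r,-K_X}$; combined with the ``upshift'' Lemma~\ref{lem-upshift} this reduces the whole problem to divisors $D=dL-aE_1$, $0\leq a\leq d$, on the blowup at two points. Second, in that reduced setting it constructs the degree-$m$ summand explicitly as a \emph{non-nef} line bundle $B'=sL+M_t+\alpha(L-3E_1)+\beta(L-2E_1-E_2)$, where the correction classes $L-3E_1$ and $L-2E_1-E_2$ are orthogonal to $K_X$ and are used precisely to make $D-B'$ nef while keeping $B'$ cohomology-free. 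Without some mechanism of this kind for producing non-nef summands with vanishing higher cohomology, the decomposition you propose cannot be carried out, so the proof as sketched does not go through.
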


The cone of curves $\NE(X)$ is spanned by the classes of the $(-1)$-curves on $X$; the $(-1)$-curves are an exceptional divisor, a line through two points, or a conic through $5$ points if $d=4$ (review Example \ref{ex-delPezzo}).  Dually, the nef cone $\Nef(X)\subset N^1(X)$ is the subcone of classes $\nu$ such that $\nu \cdot C \geq 0$ for every $(-1)$-curve $C$.  

The main additional ingredient that goes into the proof of Theorem \ref{thm-delPezzoMain} is the action of the Weyl group on $\Pic(X)$.  The Weyl group preserves the intersection pairing, so it preserves the nef cone, the Euler characteristic, and the dimensions of cohomology groups of line bundles.  The Weyl group acts transitively on length $\ell\leq k$ configurations $(C_1,\ldots,C_\ell)$  of disjoint $(-1)$-curves unless $\ell = k-1$, in which case the action has two orbits \cite{Manin}.  

The Weyl group additionally preserves the canonical bundle $K_X$.  For this reason, we will take $N=-K_X$ in Definition \ref{def-good} and study $(-K_X)$-good direct sums of line bundles---note that the needed inequality $K_X.(F+K_X) \geq 2$ follows from our assumption that $d\geq 4$.  Then the Weyl group additionally preserves the set $\Lambda_{r,-K_X}$ of first Chern classes of rank $r$ $(-K_X)$-good direct sums of line bundles.  Theorem \ref{thm-delPezzoMain} is a direct consequence of the next result and Corollary \ref{cor-latticeVanish}.

\begin{proposition}\label{prop-delPezzoContained}
If $X$ is a del Pezzo surface of degree $4\leq d\leq 7$ we have $\Nef(X)_\Z\subset \Lambda_{r,-K_X}$.
\end{proposition}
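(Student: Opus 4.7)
The plan is to combine three ingredients: a cohomology vanishing for nef line bundles, the Weyl group action to reduce to orbit representatives, and an explicit combinatorial decomposition.

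First, any nef line bundle $M$ on $X$ has no higher cohomology. Indeed, $-K_X$ is ample, so $M - K_X$ is the sum of a nef and an ample divisor, hence ample and in particular nef and big; Kawamata-Viehweg vanishing then gives $H^i(M) = H^i(K_X + (M - K_X)) = 0$ for $i > 0$. Consequently, to prove $\nu \in \Lambda_{r,-K_X}$ for a nef integer class $\nu$, it suffices to exhibit a decomposition $\nu = L_1 + \cdots + L_r$ in $\Pic(X)$ such that each $L_i$ is nef and the anticanonical degrees $-K_X \cdot L_i$ pairwise differ by at most $1$.

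Second, I will reduce to a finite combinatorial problem via the Weyl group $W$ of $X$. Since $W$ preserves $K_X$, the intersection form, the nef cone, the cohomology dimensions of line bundles, and hence the set $\Lambda_{r,-K_X}$, it is enough to verify the conclusion for one representative of each $W$-orbit on $\Nef(X)_\Z$. For $4 \leq d \leq 7$ the Weyl orbit structure on $\Nef(X)_\Z$ is classically understood: after applying an element of $W$, one can arrange $\nu = \delta L - \sum \alpha_i E_i$ so that the multiplicities $\alpha_i$ are weakly decreasing and bounded in terms of $\delta$, yielding a short list of representatives at each fixed anticanonical degree $n = -K_X \cdot \nu$.

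Third, for each orbit representative $\nu$ and each rank $r$, write $n = qr + s$ with $0 \leq s < r$ and construct the decomposition from a small palette of elementary nef line bundles of low anticanonical degree: the structure sheaf, $L-E_i$ and $L - E_i - E_j$ when nef, the conic pullbacks $2L - \sum_{a\in S}E_a$ when nef, and $-K_X$ itself. Combining these building blocks one can realize nef summands of any prescribed anticanonical degree; the Weyl symmetry absorbs the exceptional multiplicities, and the two permitted degrees $q$ and $q+1$ give enough slack to match $c_1 = \nu$. The main obstacle will be the uniform verification of this last step: one must ensure that every orbit representative admits the decomposition for every $r$, and the difficulty grows as $d$ shrinks because the nef cone of $D_5$ has many extremal rays and $W(D_5)$ is large, forcing a careful case analysis. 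The degree assumption $d\geq 4$ is exactly what ensures $-N \cdot (F + K_X) \geq 2$ for $N = -K_X$ as required by Definition~\ref{def-good}, and also keeps the Weyl-orbit bookkeeping tractable.
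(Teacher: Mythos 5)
There is a genuine gap at the very first step. You reduce the problem to writing $\nu = L_1+\cdots+L_r$ with each $L_i$ \emph{nef} (your palette of building blocks consists entirely of nef classes, and your appeal to Kawamata--Viehweg only covers nef summands). But $\Lambda_{r,-K_X}$ is defined using line bundles with \emph{no higher cohomology}, a strictly larger class than the nef line bundles, and that extra room is essential. Concretely, take $X=\Bl_{p_1,p_2}\P^2$ (degree $7$), $D=2L$, $r=3$: then $D\cdot(-K_X)=6$, so the degrees must all equal $2$, and the only nef line bundles of anticanonical degree $2$ are $L-E_1$ and $L-E_2$; no sum of three of these equals $2L$. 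The paper's Remark after Proposition \ref{prop-delPezzoRounding} records exactly this example and resolves it with the non-nef summand $E_1+E_2$, which has no higher cohomology: $2L=(L-E_1)+(L-E_2)+(E_1+E_2)$. The actual proof systematically uses non-nef bundles (the ``basic'' bundles $sL$, $sL+E_1$, $sL+E_1+E_2$ and their modifications by $L-3E_1$ and $L-2E_1-E_2$), so your restriction to nef summands cannot be repaired by a cleverer choice of palette.

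Beyond this, the combinatorial core of your argument is not actually carried out: ``combining these building blocks one can realize nef summands of any prescribed anticanonical degree'' is precisely the statement that needs proof, and you acknowledge it as the main obstacle. The paper's route is different and more structured: Lemma \ref{lem-upshift} lets one walk $D$ toward the boundary of the nef cone by replacing $D$ with $D-E_i+E_j$; iterating forces $D$ to be orthogonal to a $(-1)$-curve, and the Weyl group action reduces to the blowup at $k-1$ points, eventually landing on $D=dL-aE_1$ with $0\leq a\leq d$ on $\Bl_{p_1,p_2}\P^2$. That case is then handled by induction on $r$ (Proposition \ref{prop-delPezzoRounding}), peeling off a single well-chosen line bundle $M$ of anticanonical degree $\lfloor (3d-a)/r\rfloor$ with no higher cohomology such that $D-M$ stays nef. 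If you want to salvage your outline, you would need to (i) enlarge your palette to include non-nef classes such as $E_i$, $E_1+E_2$, and the modified basic bundles, with a separate cohomology-vanishing argument for them (Lemma \ref{lem-BasicBP^2} style), and (ii) replace the vague orbit enumeration with an inductive mechanism of the above kind.
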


Since the Weyl group preserves both the nef cone and $\Lambda_{r,-K_X}$, we only need to show that if $D\in \Nef(X)_\Z$, then some translate of $D$ by a Weyl group element is in $\Lambda_{r,-K_X}$.  Our next lemma further allows us to repeatedly replace $D$ with a different divisor $D'$ until $D'$\ is on the boundary of the nef cone.  

\begin{lemma}\label{lem-upshift}
Let $D\in \Nef(X)_\Z$ be a nef divisor, and suppose $D \cdot E_i \geq D \cdot E_j$.  Suppose $D' = D - E_i + E_j$ is nef and that $D'\in \Lambda_{r,-K_X}$.  Then $D\in \Lambda_{r,-K_X}$.
\end{lemma}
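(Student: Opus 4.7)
The plan is to build a $(-K_X)$-good rank $r$ direct sum of line bundles $\cE$ with $c_1(\cE) = D$ by modifying a single summand of a known good decomposition for $D'$. Since $D' \in \Lambda_{r,-K_X}$, fix $\cE' = \bigoplus_{s=1}^r L_s'$ good with $c_1(\cE') = D'$. The proposal is to take $L_t := L_t'$ for $t \neq s$ and $L_s := L_s' + E_i - E_j$ for a carefully chosen index $s$. By construction $c_1(\cE) = c_1(\cE') + E_i - E_j = D' + E_i - E_j = D$, so the first Chern class is correct.

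The crucial free lunch is that $(-K_X)\cdot(E_i - E_j) = 1 - 1 = 0$, so the pairwise slopes $-K_X \cdot (L_u - L_v)$ are identical to those of $\cE'$. Thus condition (2) of Definition \ref{def-good} is preserved automatically, and the entire task reduces to choosing $s$ so that the single line bundle $L_s' + E_i - E_j$ has no higher cohomology.

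To hunt for a suitable $s$, a simple averaging argument leverages the hypothesis. Indeed,
\[
\sum_{s} \bigl(L_s' \cdot E_i - L_s' \cdot E_j\bigr) = D' \cdot (E_i - E_j) = (D\cdot E_i + 1) - (D\cdot E_j - 1) \geq 2,
\]
using $D\cdot E_i \geq D\cdot E_j$. So some summand satisfies $L_s' \cdot E_i \geq L_s' \cdot E_j + 1$, and combined with the nefness of $D$ one expects to locate an index with $L_s' \cdot E_i \geq 0$ (so that Lemma \ref{lem-BasicBP^2}(2) applies when adding $E_i$).

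For the cohomology vanishing of $L_s := L_s' + E_i - E_j$, I would combine two restriction sequences. First, since $E_i\cdot E_j = 0$ for distinct exceptional divisors, Lemma \ref{lem-BasicBP^2}(2) applied to $L_s'$ shows $L_s' + E_i$ has no higher cohomology whenever $L_s'\cdot E_i \geq 0$. Then the sequence
\[
0 \to \OO(L_s' + E_i - E_j) \to \OO(L_s' + E_i) \to \OO_{E_j}(L_s'\cdot E_j) \to 0
\]
together with the vanishing already achieved reduces the $H^1$ and $H^2$ of $L_s$ to a question about the restriction map $H^0(L_s' + E_i) \to H^0(\OO_{\P^1}(L_s'\cdot E_j))$. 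The $H^2$-vanishing drops out provided $L_s'\cdot E_j \geq -1$, while $H^1$-vanishing reduces to surjectivity of this restriction map on global sections.

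The main obstacle will be this surjectivity on $H^0$: it is the place where the freedom to choose $s$ intelligently (e.g.\ picking an $s$ with $L_s'\cdot E_j = -1$ when possible so that the target is zero, or otherwise using nefness of $D$ and the $(-K_X)$-proximity of the summands to force enough sections along $E_j$) has to be used in a structured way. I would expect the proof to split into two cases based on whether some summand has $L_s'\cdot E_j \leq -1$, with a direct application of the sequences above in that case, and a slightly more delicate section-restriction argument, exploiting the nefness of $D$ on the del Pezzo surface, when no summand has negative intersection with $E_j$.
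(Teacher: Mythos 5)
Your overall strategy coincides with the paper's: take a $(-K_X)$-good sum $\cE'$ with $c_1(\cE')=D'$, observe that $-K_X\cdot(E_i-E_j)=0$ so condition (2) of Definition \ref{def-good} is untouched, locate by averaging a summand $L'$ with $L'\cdot E_i > L'\cdot E_j$, and replace it by $L'+E_i-E_j$. However, the proposal stops exactly at the step that constitutes the content of the lemma: you reduce the required vanishing $h^{>0}(L'+E_i-E_j)=0$ to the surjectivity of $H^0(L'+E_i)\to H^0(\OO_{E_j}(L'\cdot E_j))$ and then write that this ``has to be used in a structured way'' and that you ``would expect the proof to split into two cases.'' That is an acknowledged open step, not a proof. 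The paper commits to the precise claim that one can choose the summand with $L'\cdot E_i > L'\cdot E_j\geq -1$ and that for such a summand $L'+E_i-E_j$ again has no higher cohomology; without establishing that vanishing (or the equivalent surjectivity), the new sum is not known to be good and the lemma is not proved.

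There is also a misattribution in how you propose to find the right index $s$. Nefness of $D$ controls $D'=\sum_s L_s'$ but says nothing directly about an individual summand, so it does not by itself give you $L_s'\cdot E_i\geq 0$. The inequality you actually need is $L_s'\cdot E_j\geq -1$ (whence $L_s'\cdot E_i\geq 0$), and this is \emph{not} a formal consequence of $L_s'$ having no higher cohomology: for instance $-3L+2E_j+\sum_{m\in S}E_m$ ($j\notin S$) has no cohomology at all yet meets $E_j$ in $-2$. What rules such summands out is the combination of no higher cohomology with the fact that in a good sum with nef first Chern class every summand has $-K_X\cdot L_s'\geq -1$ (all $(-K_X)$-degrees lie within $1$ of the average $(-K_X\cdot D')/r\geq 0$): if $L_s'\cdot E_j\leq -2$ the restriction sequence to $E_j$ forces $K_X-L_s'+E_j$ to be effective, which is incompatible with $-K_X\cdot L_s'\geq -1$ on a del Pezzo of degree at least $4$. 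With that inequality in hand, Lemma \ref{lem-BasicBP^2}(2) gives the vanishing for $L'+E_i$, and the case $L'\cdot E_j=-1$ of your restriction sequence is immediate; the remaining surjectivity for $L'\cdot E_j\geq 0$ still needs an argument, and supplying it is the missing core of your write-up.
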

\begin{proof}
Since $D'\in \Lambda_{r,-K_X}$, there is a $(-K_X)$-good direct sum of line bundles $\cE'$ such that $c_1(\cE') = D'$.  Since $D \cdot E_i\geq D \cdot E_j$, we have $D' \cdot E_i > D' \cdot E_j$, and at least one of the line bundles $L'$ in $\cE'$ must have $L'.E_i > L'.E_j \geq -1$.  If we put $M = L'+E_i-E_j$, then since $L'$ has no higher cohomology it follows that $M$ has no higher cohomology.  Additionally, $M \cdot (-K_X) = L' \cdot (-K_X)$.  Then if we define $\cE$ by taking $\cE'$ and replacing $L'$ with $M$, it follows that $\cE$ is $(-K_X)$-good and $c_1(\cE) = D$.
\end{proof}

Now let $D\in \Nef(X)_\Z$ be arbitrary.  If there are $i,j$ such that Lemma \ref{lem-upshift} can be applied, then we replace $D$ with $D'$.  We iterate this process until we arrive at a divisor $D$ such that the lemma cannot be applied for any pair of indices $i,j$.  Note that if $C$ is any $(-1)$-curve on $X$, then $C \cdot (-E_i+E_j) \geq -1$.  This implies that $D$ must be orthogonal to some $(-1)$-curve.  If $k\geq 3$, then applying an element of the Weyl group we may assume $D$ is orthogonal to $E_k$.  In this case, we reduce to studying a divisor class on the blowup at $k-1$ points. 

Likewise, if $k=2$ then by inspection $D$ is orthogonal to either $E_1$ or $E_2$.  That is, up to the Weyl group action, $D$ takes the form $D = dL - aE_1$ for some $0\leq a \leq d$.  In the end, we have reduced the proof of Proposition \ref{prop-delPezzoContained} to the following statement.

\begin{proposition}\label{prop-delPezzoRounding}
Let $D = dL - aE_1$ with $0\leq a \leq d$ be a nef divisor on $X = \Bl_{p_1,p_2} \P^2$, and let $r \geq 1$.  Then $D\in \Lambda_{r,-K_X}$.
\end{proposition}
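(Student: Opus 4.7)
The plan is to explicitly construct a rank-$r$ $(-K_X)$-good direct sum of line bundles on $X = \Bl_{p_1,p_2}\P^2$ with first Chern class $D = dL - aE_1$. Set $T = -K_X \cdot D = 3d - a$ and write $T = rq + s$ with $0 \leq s < r$; the goal is $s$ summands of $(-K_X)$-degree $q+1$ and $r-s$ summands of degree $q$, each with no higher cohomology.

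A key preliminary is that on the del Pezzo $X$ every nef line bundle has no higher cohomology, by Kodaira vanishing applied to $M - K_X = M + (-K_X)$, which is nef plus ample and hence ample. The $(-1)$-curves $E_1$, $E_2$, $L-E_1-E_2$ also have no higher cohomology by Lemma \ref{lem-BasicBP^2}, and iterating Lemma \ref{lem-BasicBP^2}(2) produces further useful classes such as $E_1+E_2$, $L+E_1$, $L+E_2$, $2L-E_1-E_2$, and $nL \pm E_i$. This yields a flexible toolbox covering every integer $(-K_X)$-degree $m \geq 0$.

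The construction then splits into three ranges, starting from the natural nef decomposition $D = (d-a)L + a(L-E_1)$. For $r \geq T$ (so $q = 0$), split each $L$ as $E_1 + E_2 + (L-E_1-E_2)$ and each $L-E_1$ as $E_2 + (L-E_1-E_2)$, obtaining $T$ summands of degree $1$, and pad with $r-T$ copies of $\OO_X$. For $T/2 \leq r < T$ (so $q = 1$), start from that same degree-$1$ decomposition and merge $T-r$ disjoint distinct-type pairs from $\{E_1, E_2, L-E_1-E_2\}$, each pair producing one of $L-E_1$, $L-E_2$, or $E_1+E_2$ (all of degree $2$); a tripartite-matching check using the counts $d, d-a, d$ of the three types shows such a pairing exists throughout this range (feasibility requires $r \geq d$, which holds automatically since $T/2 \geq d$ from $a \leq d$). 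For $r < T/2$ (so $q \geq 2$), solve a linear system whose unknowns count the summands of each admissible type (including $L, L\pm E_i, 2L-E_1-E_2, 2L-E_i, 2L$, and so on), requiring the Chern classes to sum to $D$, the total rank to be $r$, and every degree to lie in $\{q, q+1\}$; nonnegativity of the counts reduces to the inequalities defining the range.

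The main obstacle, I expect, is the low-rank regime $r < d$ (which only occurs when $a$ is large enough compared to $r$), because the target degrees $q, q+1 \geq 3$ force the use of line bundles of higher $L$-coefficient, and the bookkeeping becomes combinatorially delicate. Here one argues by sub-cases on the residues of $d$ and $a$ modulo $r$, employing explicit rewrites of small multiples of $L$ with the desired degree profiles, for instance $2L = (L-E_1) + (L-E_2) + (E_1+E_2)$ into three degree-$2$ summands, $3L = (2L-E_1) + (L+E_1)$ into summands of degrees $5$ and $4$, or $4L = (L+E_1) + (L+E_2) + (2L-E_1-E_2)$ into three summands of degree $4$, each of which has no higher cohomology by the library assembled above.
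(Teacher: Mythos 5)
Your toolbox of line bundles with no higher cohomology is correct (Kodaira vanishing for nef classes, plus $E_1$, $E_2$, $E_1+E_2$, $L\pm E_i$, etc.\ via Lemma \ref{lem-BasicBP^2}), and your first two cases are complete: for $r\geq T$ you split $D=(d-a)L+a(L-E_1)$ into $T$ summands of $(-K_X)$-degree $1$ plus copies of $\OO_X$, and for $T/2\leq r<T$ your matching count (using $r\geq T/2\geq d$, hence $T-r\leq T-d$) does show that the required $T-r$ distinct-type merges exist. This is a genuinely different, non-inductive route from the paper's, which instead peels off one line bundle at a time: it constructs a single $M$ with $M\cdot(-K_X)=\lfloor T/r\rfloor$, no higher cohomology, and $D-M$ nef, starting from a ``basic line bundle'' $sL+M_t$ and correcting by copies of $L-3E_1$ and $L-2E_1-E_2$, and then concludes by induction on $r$ together with Lemma \ref{lem-upshift}.

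However, there is a genuine gap precisely where the proposition is hard: the range $r<T/2$, i.e.\ $q\geq 2$. There you only assert that one can ``solve a linear system'' in the multiplicities of admissible line bundles and that nonnegativity ``reduces to the inequalities defining the range,'' and you yourself flag the regime $r<d$ as delicate, offering only sample identities introduced by ``for instance.'' Feasibility of this integer program is exactly the content of the proposition in this range and cannot be waved through. For instance, if all $E_2$-coefficients are taken to be zero, each summand of degree $m_i$ forces $c_i\in[\lceil (m_i-1)/3\rceil,\lfloor m_i/2\rfloor]$ with $\sum c_i=d$, and this already fails for $D=2L$, $r=3$ (the paper's own cautionary example); so nonzero $E_2$-coefficients must be introduced and made to sum to zero while every summand keeps $(-K_X)$-degree in $\{q,q+1\}$ and no higher cohomology. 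Until you either solve that system in general with a verified feasibility argument, or run an induction like the paper's (produce one line bundle $M$ of degree $q$ with $D-M$ nef and reduce $r$ by one), the case $q\geq 2$ --- and hence the proposition --- is not proved. (Minor: your case boundaries are slightly off, e.g.\ $r=T$ gives $q=1$ and $r=T/2$ gives $q=2$, though your constructions happen to cover those boundary points.)
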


\begin{remark}
It is crucial to regard $D$ as a divisor on the blowup at $2$ points instead of as a divisor on the blowup at $1$ point.  For example, consider $D = 2L$ and $r = 3$.  Then $D \cdot (-K_X) = 6$, so we need to express $2L$ as a sum of $3$ line bundles of $(-K_X)$-degree $2$ with no higher cohomology.  The only such line bundle on the blowup at $1$ point is $L-E_1$, so there is no way to achieve this.  On the other hand, on the blowup at $2$ points, there is the additional line bundle $E_1+E_2$ of $(-K_X)$-degree $2$, and  $$2L = (L-E_1)+(L-E_2)+(E_1+E_2).$$ 
\end{remark}

\begin{proof}[Proof of Proposition \ref{prop-delPezzoRounding}]
The proof is by induction on $r$.  The result is clear for $r=1$.  Let $r\geq 2$.  If $r=2$ and $D = L-E_1$, then $D = (L-2E_1)+E_1$, so we may ignore this case.  We will prove the following claim.

\emph{Claim:} Suppose it is not the case that $r=2$ and $D = L-E_1$.  Write $$  \frac{D \cdot (-K_X)}{r} = \frac{3d-a}{r} = m + \frac{p}{r} \qquad (0\leq p<r).$$ (We clearly have $m\geq 0$.) Then there is a line bundle $M$ with $M \cdot (-K_X)=m$ such that $M$ has no higher cohomology and $D-M$ is nef.

First observe that the claim implies the result.  Indeed, by induction and Lemma \ref{lem-upshift} it follows that $D-M \in \Lambda_{r-1,-K_X}$.  We have $$\frac{(D-M) \cdot (-K_X)}{r-1} = m+\frac{p}{r-1},$$ so $D-M$ can be written as $c_1(\cE)$ for a good sum $\cE$ of $p$ line bundles of $(-K_X)$-degree $m+1$ and $(r-1)-p$ line bundles of $(-K_X)$-degree $m$.  Then $D = c_1(\cE \oplus M)$, and $\cE\oplus M$ is a good sum.

Next we prove the claim by constructing the line bundle $M$.  Note that the claim is trivial if $m=0$, since then we can take $M=0$.  In what follows we assume $m>0$.  Write $$m = 3 s+t \qquad (0\leq t <  3).$$ For $0\leq t < 3$ we define line bundles $M_t$\ as follows:
\begin{align*}
M_0 &= 0\\
M_1 &= E_1\\
M_2 &= E_1+E_2
\end{align*}
noting that $M_t .(-K_X) = t$.  Then we put $$B = sL+M_t,$$ and observe $B.(-K_X) = m$.  The line bundle $B$ is the \emph{basic line bundle of $(-K_X)$-degree $m$}.  Note that $B$ has no higher cohomology.

We now modify $B$ by adding some number $\alpha$ of copies of $L-3E_1$ and some number $\beta$ of copies of $L-2E_1-E_2$ to it; call such a line bundle $$B' = B + \alpha(L-3E_1)+\beta (L-2E_1-E_2) =  d'L - a'E_1-b'E_2.$$  Then $B'$ still has $B'.(-K_X) = m$ since $L-3E_1$ and $L-2E_1-E_2$ are both orthogonal to $K_X$.  For $B'$ to be the desired line bundle $M$ that proves the claim, we need to choose $\alpha$ and $\beta$ so that $B'$ has no higher cohomology and $D-B'$ is nef.  

For $D-B'$ to be nef we must have $(D-B') \cdot E_2 \geq 0$, so we have $-1\leq b'\leq 0$.  Then either $\beta = 0$, or $t=2$ and $\beta = 1$. Similarly, considering $E_1$ shows $-1\leq a'\leq a$.  Now suppose we first make $\beta$ as large as possible, then make $\alpha$ as large as possible, without violating the inequalities $b' \leq 0$ or $a'\leq a$.  We claim that then $D-B'$ is nef, i.e. that $(D-B') \cdot (L-E_1-E_2) \geq 0$.  There are a couple cases to consider.

\emph{Case 1: $t=2$ and $a=0$.} In this case $\alpha=\beta =0$.  We have $D \cdot (L-E_1-E_2)=d$ and $B' = sL+E_1+E_2$, so $B' \cdot (L-E_1-E_2) = s+2$.  Then $$3(d-(s+2)) = 3d-m+t-6=mr+p-m-4 = m(r-1)+p-4.$$  Now $m\equiv 2\pmod 3$ and $m> 0$, so $d\geq s+2$ unless $m=2$, $s=0$, and $d=1$.  But then $m = \lfloor 3/r\rfloor$, so this is impossible since $r\geq 2$.  Therefore $d\geq s+2$ and $D-B'$ is nef.  Note that also $B'$ has no higher cohomology, so we may take $M=B'$ in this case to complete the proof.

\emph{Case 2: $t\neq 2$ or $a > 0$.}  If $t = 2$ but $a>0$ then we have $\beta =1$, and therefore $b'=0$.  We also have $b'=0$ if $t \neq 2$, so $b'=0$ in every case.  Then $B'$ takes one of the following three forms: $$
d'L - (a-2)E_1  \qquad d'L - (a-1)E_1 \qquad d'L - aE_1.
$$ In order to have $(D-B') \cdot (L-E_1-E_2)\geq 0$, we will need to compare $d'$ with $d$.    To do this we first compute $d'$.  Observe that the total number of line bundles added to the basic line bundle is $$\alpha+\beta = \left\lfloor \frac{a+t}{3}\right\rfloor;$$ write $$a+t = 3(\alpha+\beta) + \ell \qquad (0\leq \ell <3).$$ Then $$d' = s + \alpha + \beta,$$ and $$3(d-d') = 3d-3s-3(\alpha+\beta) = (mr+a+p)-(m-t)-(a+t-\ell)=m(r-1)+p+\ell.$$ Therefore $d'<d$ in any case, which implies $D-B'$ is nef unless $B' = d'L - (a-2)E_1$.  In this case we need the stronger inequality $d'\leq d-2$, or equivalently we must show $$m(r-1)+p+\ell >3.$$  But if $B' = d'L - (a-2)E_1$ then $\ell = 2$, and the only way the inequality can fail is if $m=1$, $r=2$, and $p=0$.  In this case $3d-a=2$, and since $a\leq d$ we have $d=a=1$.  Thus this case is the special case $D=L-E_1$, $r=2$, which we have already excluded.  Therefore $D-B'$ is nef.

Finally we are ready to construct the line bundle $M$ that proves the claim in case $t\neq 2$ or $a>0$.  Starting from the basic line bundle $B$, add a single copy of $L-2E_1-E_2$ if it won't violate the inequality $b'\leq 0$ (this won't violate the inequality $a'\leq a$ since $a>0$ if $t\neq 2$).   After that, repeatedly add copies of $L-3E_1$.  Sometime before the inequality $a'\leq a$ is violated, we will have $B' \cdot (L-E_1-E_2)\leq D \cdot (L-E_1-E_2)$. Let $M$ be the first $B'$ where this inequality holds.  Each copy of $L-2E_1-E_2$ or $M-3E_1$ that is added decreases the intersection number with $M-E_1-E_2$ by $2$.  Thus we will additionally have $M \cdot (L-E_1-E_2)\geq D \cdot (L-E_1-E_2)-1\geq -1$.  Since $b'=0$, this implies $M$ has no higher cohomology and $D-M$ is nef.
\end{proof}

\bibliographystyle{plain}

\begin{thebibliography}{Cos06b}

\bibitem[Bea83]{Beauville}
A. Beauville. Complex algebraic surfaces, volume 68 of London Mathematical Society Lecture Note Series. Cambridge University Press, Cambridge, 1983.


\bibitem[Bon89]{Bondal}
A. I. Bondal, Representations of associative algebras and coherent sheaves, Izv. Akad Nauk SSS Ser. Mat. 53 (1989), 25-44; English transl. in Math. USSR Izv. 34 (1990).

\bibitem[Cos06a]{CoskunScroll}
I. Coskun, Degenerations of surface scrolls and the Gromov-Witten invariants of Grassmannians, J. Algebraic Geom., {\bf 15} (2006),  223--284.

\bibitem[Cos06b]{CoskundelPezzo}
I. Coskun, Enumerative geometry of Del Pezzo surfaces via degenerations, Amer. J. Math., {\bf 128} no. 3 (2006), 751--786.

\bibitem[CH15]{CoskunHuizenga} I. Coskun and J. Huizenga, The birational geometry of the moduli spaces of sheaves on $\mathbb{P}^2$, Proceedings of the G\"okova Geometry-Topology Conference 2014, (2015), 114--155.


\bibitem[Gim87]{gimi2}
A.~Gimigliano.
\newblock {On linear systems of plane curves}.
\newblock PhD thesis, Queen's University, Kingston, 1987.

\bibitem[GHi94]{GottscheHirschowitz}
L. G\"{o}ttsche and A. Hirschowitz, Weak Brill-Noether for vector bundles on the projective plane, in Algebraic
geometry (Catania, 1993/Barcelona, 1994), 63--74, Lecture Notes in Pure and Appl. Math., 200 Dekker, New York.

\bibitem[Harb84]{Harbourne}
B. Harbourne, The geometry of rational surfaces and Hilbert functions of points in the plane, in {\it Proceedings of the 1984 Vancouver conference in algebraic geometry}, 95--111, CMS Conf. Proc., 6, Amer. Math. Soc., Providence, RI.


\bibitem[Hart77]{Hartshorne}
R. Hartshorne. Algebraic geometry. Springer-Verlag, New York, 1977. Graduate Texts in Mathematics, No. 52.

 \bibitem[Hir89]{Hirschowitz}
A. Hirschowitz, Une conjecture pour la cohomologie des diviseurs sur les surfaces rationnelles g\'en\'eriques, J. Reine Angew. Math. {\bf 397} (1989), 208--213.

\bibitem[Hui16a]{HuizengaPaper2}
J.~Huizenga, Effective divisors on the Hilbert scheme of points in the plane and interpolation for stable bundles, J. Algebraic Geom. {\bf 25} (2016), no.~1, 19--75.

\bibitem[Hui16b]{Huizenga} J. Huizenga, Birational geometry of moduli spaces of sheaves and Bridgeland stability,  preprint (2016).

\bibitem[HuL10]{HuybrechtsLehn} D. Huybrechts\ and\ M. Lehn, {\it The geometry of moduli spaces of sheaves}, second edition, Cambridge Mathematical Library, Cambridge Univ. Press, Cambridge, 2010.



\bibitem[KO95]{KuleshovOrlov}
S. A. Kuleshov and D. O. Orlov, Exceptional sheaves on del Pezzo surfaces, {\em Russian Acad. Sci. Izv. Math.}, {\bf 44} no. 3 (1995), 479--513.


\bibitem[LeP97]{LePotier}
J. Le Potier, {\it Lectures on vector bundles}, translated by A. Maciocia, Cambridge Studies in Advanced Mathematics, 54, Cambridge Univ. Press, Cambridge, 1997. 

\bibitem[Man97]{Manin} Yu. I. Manin, {\it Cubic forms: algebra, geometry, arithmetic}, translated from the Russian by M. Hazewinkel, North-Holland, Amsterdam, 1974.


\bibitem[Rud88]{Rudakov}
A. N. Rudakov, Exceptional vector bundles on a quadric, Izv. Akad. Nauk SSSR Ser. Mat., {\bf 52} no. 4 (1988), 788--812; English translation Math. USSR Izv. {\bf 33} (1989), 115--138.

\bibitem[Seg60]{Segre}
B. Segre, Alcune questioni su insiemi finiti di punti in geometria algebrica, Univ. e Politec. Torino Rend. Sem. Mat. {\bf 20} (1960/1961), 67--85.

\bibitem[Wal98]{Walter}
C. Walter, Irreducibility of moduli spaces of vector bundles on birrationally ruled surfaces, 
Algebraic Geometry (Catania, 1993/Barcelona, 1994), Lecture Notes in Pure and Appl. Math., {\bf 200} (1998), 201--211.



\end{thebibliography}

\end{document}